\newtheorem{theorem}{Theorem}[section]
\newtheorem{proposition}[theorem]{Proposition}
\newtheorem{lemma}[theorem]{Lemma}
\newtheorem{corollary}[theorem]{Corollary}
\theoremstyle{definition}
\theoremstyle{remark}
\newtheorem{remark}[theorem]{Remark}
\numberwithin{equation}{section}
\begin{document}

\title[Inverse image of precompact sets]
      {Inverse image of precompact sets and existence theorems for    
			the Navier-Stokes equations in spatially periodic setting
			}

\author{A. Shlapunov}

\address[Alexander Shlapunov]
        {Siberian Federal University
\\
         Institute of Mathematics and Computer Science
\\
         pr. Svobodnyi 79
\\
         660041 Krasnoyarsk
\\
         Russia}

\email{ashlapunov@sfu-kras.ru}

\author{\fbox{N. Tarkhanov}}

\address[\fbox{Nikolai Tarkhanov}]
        {Universit\"{a}t Potsdam
\\
         Institut f\"{u}r Mathematik
\\
         Karl-Liebknecht-Str. 24/25
\\
         14476 Potsdam (Golm)
\\
         Germany}

\email{tarkhanov@math.uni-potsdam.de}

\subjclass [2010] {Primary 76N10; Secondary 35Q30, 76D05}

\keywords{Navier-Stokes equations,
          smooth solutions,
          existence theorem}

\begin{abstract}
We consider the initial problem for the Navier-Stokes equations over
${\mathbb R}^3 \times [0,T]$ with a positive  time $T$ in the spatially periodic setting.
Identifying periodic vector-valued functions on  ${\mathbb R}^3$ with functions on the
$3\,$-dimensional torus ${\mathbb T}^3$, we prove that the problem 
induces an open injective mapping ${\mathcal A} _s:  B^{s}_1 \to B^{s-1}_2$ where 
$B^{s}_1$, $B^{s-1}_2$ are elements from scales of specially constructed function spaces of 
Bochner-Sobolev type parametrized with the smoothness index $s \in \mathbb N$. Finally, we 
prove rather expectable statement that a map ${\mathcal A} _s$ 
is surjective if and only if the inverse image ${\mathcal A} _s ^{-1}(K)$ 
of any precompact set $K$ from the range of the map ${\mathcal A} _s $ is bounded 
in the Bochner space $L^{\mathfrak s} ([0,T], L ^{\mathfrak s} ({\mathbb T}^3))$ 
with the Ladyzhenskaya-Prodi-Serrin numbers ${\mathfrak s}$, ${\mathfrak r}$. 
\end{abstract}

\maketitle

\section*{Introduction}
\label{s.0}

The problem of describing the dynamics of incompressible viscous fluid is of great importance 
in applications.
The dynamics is described by the Navier-Stokes equations and the problem consists in finding a
sufficiently regular solution to the equations for which a uniqueness theorem is available,
   cf. \cite{Lady03}.
Essential contributions has been published in the research articles
   \cite{Lera34a,Lera34b},
   \cite{Kolm42},
   \cite{Hopf51},
as well as surveys and books
   \cite{Lady70}),
   \cite{Lion61,Lion69},
   \cite{Tema79},
   \cite{FursVish80},
etc.

More precisely, let
   $\varDelta = \partial^2_{x_1} + \partial^2_{x_2} + \partial^2_{x_3}$ be the Laplace 
operator,
   $\nabla$ and $\mathrm{div}$ be the gradient operator and the divergence operator, 
respectively,
in the Eucledean space ${\mathbb R}^3$.
In the sequel we consider the following initial problem.
Given any sufficiently regular vector-valued functions
   $f = (f^1, f^2, f^3)$ and
   $u_0 = (u_{0}^1, u_{0}^2, u_{0}^3)$
on
   ${\mathbb R}^3 \times [0,T]$ and
   ${\mathbb R}^3$,
respectively, find a pair $(u,p)$ of sufficiently regular functions
   $u = (u^1, u^2, u^3)$ and
   $p$
on ${\mathbb R}^3 \times [0,T]$ satisfying
\begin{equation}
\label{eq.NS}
\left\{
\begin{array}{rcll}
   \partial _t u   -  \mu \varDelta u + (u \cdot \nabla) u  + \nabla p
 & =
 & f,
 & (x,t) \in {\mathbb R}^3 \times (0,T),
\\
   \mbox{div}\, u
 & =
 & 0,
 & (x,t) \in {\mathbb R}^3 \times (0,T),
\\[.05cm]
   u
& =
& u_0,
& (x,t) \in \mathbb{R}^3 \times \{ 0 \}
\end{array}
\right.
\end{equation}
with positive fixed numbers $T$ and $\mu$.
We additionally assume that the data $f$ and $u_0$ are spatially periodic with a period $\ell > 0$,
i.e.,  for any $1 \leq j \leq 3$ we have
\begin{equation*}
\begin{array}{rcl}
   f (x+\ell e_j,t)
 & =
 & f (x,t),
\\
   u_0 (x+\ell e_j)
 & =
 & u_0 (x)
\end{array}
\end{equation*}
whenever $x \in {\mathbb R}^3$ and $t \in [0,T]$, where $e_j$ is as usual the $j\,$-th 
unit basis vector in ${\mathbb R}^3$. Then, the solution $(u,p)$ is also looked for in the 
space of spatially periodic functions with period $\ell$ on ${\mathbb R}^3 \times [0,T]$.
Relations \eqref{eq.NS} are usually referred to as but 
the Navier-Stokes equations for incompressible fluid  with given
   dynamical viscosity 
	$\mu$ of the fluid under the consideration,
   density vector of outer forces $f$, 
   the initial velocity $u_0$
and the search-for velocity vector field $u$ and the pressure $p$ of the flow, see for instance
   \cite{LaLi},
   \cite{Tema79}
for the classical setting or
   \cite{Serr59b},
   \cite{Tema95}
for the periodic setting.

In the present paper we use the method of energy type  estimates  to obtain 
an open mapping theorem and a criterion of the surjectivity for the mapping induced by 
\eqref{eq.NS} over scales of specially constructed function spaces of 
Bochner-Sobolev type parametrized with the smoothness index $s \in \mathbb N$. 

After Leray \cite{Lera34a,Lera34b}, a great attention of researchers was paid to weak 
solutions of 
\eqref{eq.NS} in cylindrical domains in ${\mathbb R}^3 \times [0,+\infty)$.
E. Hopf \cite{Hopf51} proved the existence of weak solutions to \eqref{eq.NS} satisfying 
reasonable estimates. However, in this full generality no uniqueness theorem for a weak 
solution has been known. On the other hand, under stronger conditions on the solution, it is 
unique, cf. results \cite{Lady70}, \cite{Lady03} by O.A. Ladyzhenskaya who proved the 
existence of a smooth solution for the 
two-dimensional version of problem \eqref{eq.NS}. Some authors (see, e.g., Leray 
\cite{Lera34a,Lera34b} and also a recent paper \cite{Tao16} by T. Tao) 
expressed a lot of skepticism on the existence of regular solutions for all regular data in 
${\mathbb R}^3$. Thus, let us explain the place of our investigation in a great number of 
works on the subject.

Traditionally, the approach based on a priori estimates is used for the proof of existence 
theorems for boundary value problems in  the theory of parabolic and elliptic equations.
A solution is often given as a formal series of Faedo-Galerkin type.
Then a proper a priori estimate (for the Navier-Stokes equations this is the so-called energy 
estimate) provides the convergence of the series to a generalised (weak) solution.
On the next step, modified a priori estimates usually lead to a uniqueness theorem and to an 
improvement of the regularity of the weak solution up to an acceptable level depending on the 
regularity of the data (see for instance the books \cite{Lady70}, \cite{Tema79} and the 
references given there). For the Navier-Stokes equations this method was only partially 
efficient: the existence of a weak solutions was proved but no uniqueness theorem or modified 
a priori estimate improving the regularity of weak solutions
have been found up to now.

The main technical difficulty appears at the point where the energy type (in)equ\-alities are 
used for finite indices of smoothness and integrability: a nonnegative homogeneous functional 
(a norm or a quasinorm) stands on the left hand side while on the
right hand side the nonlinearity and the summands depending on the data are placed.
In the three-dimensional case the nonlinearity apparently 
can not be estimated via the left hand hand side with respect to the smoothness or 
integrability with a universal constant for all solutions and data. As is known, the corresponding 
excess, expressed additively in terms of the 
order of generalised partial derivatives with respect to space variables from the Lebesque 
space $L^2 ({\mathbb T}^3)$, equals $1/4$. This fact hints that one should 
involve infinitely differentiable data and all their derivatives 
in order to compensate the excess. 
 In this case it is not so easy to use the advantage
 of  high regularity of the data $(f,u_0)$ because the series
involving their partial derivatives are not always converge
   (cf. \cite{FoTe89} where Gevrey quasianalytic classes were employed).
	
But the integrability is a kind of regularity, too.
Due to
   \cite{Pro59},
   \cite{Serr62},
   \cite{Lady70} and
   \cite{Lion61,Lion69},
it is known that the uniqueness theorem and improvement of regularity actually follow from 
the existence of a weak solution in the Bochner class 
$L^\mathfrak{s} ([0,T], L^\mathfrak{r} ({\mathbb R^3}))$  with the Ladyzhenskaya-Prodi-Serrin 
numbers $\mathfrak{r} $, $\mathfrak{s}$ satisfying  
   $2/\mathfrak{s} + 3/\mathfrak{r} = 1$ and
   $\mathfrak{r} > 3$
(the limit case $r = 3$ was added to the list in \cite{ESS03}).
On the other hand, the standard energy estimate provides the existence in 
$L^{s} ([0,T], L^{r} ({\mathbb R^3}))$ with $2/s + 3/r = 3/2$, only.

We note that the existence of regular solutions  to the Navier-Stokes 
equations for sufficiently small data in different 
spaces  is known since J.Leray. In addition 
to these results
O.~A. Ladyzhenskaya discovered  the so-called stability property for the Navier-Stokes 
equations in some Bochner type spaces (see \cite[Ch.~4, \S~4, Theorems 10 and 11]{Lady70}).
Namely, if for sufficiently regular data $(f,u_0)$  there is a sufficiently regular solution 
$(u,p)$ to the Navier-Stokes equations, then there is a neighbourhood of the data in which 
all elements admit solutions with the same regularity.

Let us indicate the most important points of our approach. 

1) We treat the Navier-Stokes equations 
within the framework of theory of operator equations associating them with  
a continuous nonlinear mapping  between Banach spaces. 

2) We avoid weak solutions to the Navier-Stokes equations sharing the opinion that there might 
be nonunique weak solutions with singularities for some insufficiently regular data. 
More precisely, we define two scales $\{B^{s}_1\}_{s \in {\mathbb Z}_+}$,   
$\{B^{s}_2\}_{s \in {\mathbb Z}_+}$  of separable Banach spaces such that 
\begin{itemize}
\item
each space of the scale $\{B^{s}_1\}_{s \in {\mathbb Z}_+}$ is continuously 
embedded into the spaces $L^{s} ([0,T], L^{r} ({\mathbb R^3}))$ with $2/s + 3/r = 3/2$,
\item
 the Navier-Stokes equations induce non-linear continuous mappings 
${\mathcal A}_s:  B^{s}_1 \to B^{s-1}_2$ for all $s \in {\mathbb N}$, 
\item
the components of vector fields belonging to the intersections 
$\cap_{s=1}^\infty B^{s}_1$, $\cap_{s=0}^\infty B^{s}_2$ are 
infinitely differentiable functions on the torus ${\mathbb T}^3$. 
\end{itemize}
Actually, the elements of the chosen spaces belong to 
the Bochner-Sobolev spaces over ${\mathbb T}^3 \times [0,T]$ 
of rather high Sobolev smoothness. 
In particular, this means that the solutions to the corresponding 
operator equations are at least the so-called {\it strong} solutions 
to the standard weak setting of the Navier-Stokes equations, 
see \cite[Ch. 3, \S 3.6]{Tema79}; of course, 
for sufficiently large $s$ their components  are differentiable functions with 
the smoothness depending upon the Sobolev Embedding Theorems. 

3) We use in full the mentioned above stability propertyfor the Navier-Stokes 
equations discovered by O.~A. Ladyzhenskaya.  We extend this property 
to the mappings ${\mathcal A}_s:  B^{s}_1 \to B^{s-1}_2$ with arbitrary $s \in {\mathbb N}$, 
expressing it as open mapping theorem for \eqref{eq.NS}. The statement is also 
adapted for the Fr\'chet spaces of smooth vector fields over the torus. 

4) At the next step we use the standard topological arguments immediately implying that a 
nonempty open connected set in a topological vector space coincides 
with the space itself if and only if the set is closed. In order to prove that the set of 
data admitting regular solutions to the Navier-Stokes Equations is closed, we do  not use the 
Faedo-Galerkin formal series replacing them by real approximate solutions to the Navier-
Stokes equations. More precisely, it appears that in the chosen function spaces the 
closedness of the image is equivalent to the boundedness 
of the sequences in the preimage corresponding to sequences 
converging to an element of the image's closure.

5) Finally, we prove that a map ${\mathcal A} _s$ 
is surjective if and only if the inverse image ${\mathcal A} _s ^{-1}(K)$ 
of any precompact set $K$ from the range of the map ${\mathcal A} _s $ is bounded 
in the Bochner space $L^{\mathfrak s} ([0,T], L ^{\mathfrak s} ({\mathbb T}^3))$ 
with the Ladyzhenskaya-Prodi-Serrin numbers ${\mathfrak s}$, ${\mathfrak r}$.
This echoes the idea of using the properness property
to study nonlinear operator equations, see for instance \cite{Sm65}. 

Now let us comment the contents of the paper.

Section \ref{s.prelim} contains the preliminary matters such as notation, properties of the 
involved function spaces and the classical results concerning weak solutions to the Navier-
Stokes equations.

Section \ref{s.OM} is devoted to an open mapping theorem for the 
Navier-Stokes equations treated as a continuous mapping 
on the scales of separable  Banach (Bochner-Sobolev type) spaces. 
In order to achieve it we consider the linearised problem related to the Fr\'echet derivative 
of the the Navier-Stokes mappings, cf. similar linear problems in
   \cite{Lady67} or
   \cite[Ch.~3, \S~1--\S~4]{LadSoUr67}.
	
Finally, in Section \ref{s.surjective}, using typical estimates for sufficiently regular 
solutions to the Navier-Stokes equations in Bochner-Sobolev spaces in the 
spatially periodic case
   (cf. \cite[Pt.~1, \S 3, \S~4]{Tema95}),
we prove the criterion for the existence theorems related to the Navier-Stokes 
equations acting on the introduced scales  of Banach spaces over  
${\mathbb T}^3 \times [0,T]$ to be true. Corollaries concerning $C^\infty$ smooth 
solutions are also discussed in this section.

\section{Preliminaries}
\label{s.prelim}

As usual, we denote by $\mathbb{Z}_{+}$ the set of all nonnegative integers including zero, 
and by $\mathbb{R}^n$ the Euclidean space of dimension $n \geq 2$ with coordinates 
$x = (x^1, \ldots, x^n)$.

In the sequel we use systematically the Gronwall lemma 
in the integral form for continuous functions and its generalizations.

\begin{lemma}
\label{l.Perov}
Let
   $0 < \gamma < 1$ and
   $A \geq 0$
be constants and let
   $B$,
   $C$ and
   $Y$
be nonnegative continuous functions defined on a segment $[a,b]$.
If moreover $Y$ satisfies the integral inequality
\begin{equation*}
   Y (t) \leq A  + \int_a^t (B (s) Y (s) + C (s) (Y (s))^{1-\gamma}) ds
\end{equation*}
for all $t \in [a,b]$, then
\begin{equation*}
   Y (t)
 \leq
   \left( A^{\gamma} \exp \Big( \gamma \int_a^t  B (s) ds \Big)
        + \gamma \int_a^t C (s) \exp \Big( \gamma \int_s^t B (t') d t' \Big) ds
   \right)^{1/\gamma}
\end{equation*}
for all $t \in [a,b]$.
\end{lemma}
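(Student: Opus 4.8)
The plan is to reduce the nonlinear integral inequality to a \emph{linear} differential inequality by the substitution that linearizes the sublinear term $(Y)^{1-\gamma}$, and then to integrate the resulting linear inequality by the classical integrating-factor device. First I would introduce the right-hand side as an auxiliary function
\[
   \phi (t)
 = A + \int_a^t \bigl( B (s) Y (s) + C (s) (Y (s))^{1-\gamma} \bigr)\, ds ,
\]
so that $Y (t) \leq \phi (t)$ on $[a,b]$, $\phi (a) = A$, and $\phi$ is continuously differentiable because the integrand is continuous. Since $B, C \geq 0$, since $Y \leq \phi$, and since the power $x \mapsto x^{1-\gamma}$ is nondecreasing on $[0,\infty)$ (here $0 < 1-\gamma < 1$), differentiation yields the differential inequality
\[
   \phi' (t)
 = B (t) Y (t) + C (t) (Y (t))^{1-\gamma}
 \leq B (t)\, \phi (t) + C (t) (\phi (t))^{1-\gamma} .
\]

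Next I would linearize by passing to $\psi = \phi^{\gamma}$. To avoid the singularity of $\phi^{\gamma-1}$ at points where $\phi$ vanishes (which can happen only when $A = 0$), I would first replace $\phi$ by $\phi_{\varepsilon} = \phi + \varepsilon$ with $\varepsilon > 0$; the same inequality persists for $\phi_{\varepsilon}$ by the monotonicity just used, and now $\phi_{\varepsilon} \geq \varepsilon > 0$. Multiplying the inequality for $\phi_{\varepsilon}$ by $\gamma\, (\phi_{\varepsilon})^{\gamma-1} > 0$ and using $(\phi_{\varepsilon})^{1-\gamma} (\phi_{\varepsilon})^{\gamma-1} = 1$, I obtain for $\psi_{\varepsilon} = (\phi_{\varepsilon})^{\gamma}$ the linear differential inequality
\[
   \psi_{\varepsilon}' (t)
 \leq \gamma B (t)\, \psi_{\varepsilon} (t) + \gamma C (t),
 \qquad
   \psi_{\varepsilon} (a) = (A+\varepsilon)^{\gamma} .
\]

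Finally I would integrate this linear inequality with the integrating factor $\mu (t) = \exp \bigl( -\gamma \int_a^t B (s)\, ds \bigr)$: from $(\mu \psi_{\varepsilon})' = \mu\, (\psi_{\varepsilon}' - \gamma B \psi_{\varepsilon}) \leq \gamma \mu C$, integrating over $[a,t]$, using $\mu (a) = 1$, and dividing by $\mu (t)$, one arrives at
\[
   \psi_{\varepsilon} (t)
 \leq (A+\varepsilon)^{\gamma} \exp \Big( \gamma \int_a^t B (s)\, ds \Big)
    + \gamma \int_a^t C (s) \exp \Big( \gamma \int_s^t B (t')\, d t' \Big)\, ds .
\]
Since $x \mapsto x^{1/\gamma}$ is increasing on $[0,\infty)$ and $Y (t) \leq \phi (t) \leq \phi_{\varepsilon} (t) = (\psi_{\varepsilon} (t))^{1/\gamma}$, raising both sides to the power $1/\gamma$ bounds $Y (t)$ by the claimed expression with $A$ replaced by $A+\varepsilon$; letting $\varepsilon \to 0^{+}$ and invoking continuity gives the assertion. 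I expect the only genuinely delicate point to be exactly this vanishing of $\phi$ when $A = 0$, which forces the $\varepsilon$-regularization and the final passage to the limit; everything else is the standard Bernoulli-substitution reduction to a linear Gronwall inequality.
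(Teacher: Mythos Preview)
Your proof is correct. The paper itself does not actually prove this lemma: its ``proof'' consists solely of the references to \cite{Per59} and \cite[p.~360]{MPF91} for the case $0<\gamma<1$ (and to \cite{Groe19}, \cite[p.~353]{MPF91} for the limiting Gronwall case $\gamma=1$). Your argument --- introducing the auxiliary $\phi$, linearizing the Bernoulli-type differential inequality via $\psi=\phi^{\gamma}$, applying the integrating factor, and handling the possible vanishing of $\phi$ at $A=0$ by the $\varepsilon$-regularization --- is precisely the classical route one finds in those sources, so you have supplied the details the paper outsourced.
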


\begin{proof} 
For $\gamma=1$ this gives 
Gronwall lemma,  see, for instance, \cite{Groe19} or \cite[p.~353]{MPF91}.
For $0 < \gamma < 1$ see \cite{Per59} or \cite[p.~360]{MPF91}.
\end{proof}

Also the  (discrete) Young inequality will be of frequent use in this paper.
To wit, given any $N = 1, 2, \ldots$, it follows that
\begin{equation}
\label{eq.Young}
   \prod_{j=1}^N a_j \leq \sum_{j=1}^N \frac{a_j^{p_j}}{p_j}
\end{equation}
for all positive numbers $a_j$ and all numbers $p_j \geq 1$ satisfying
   $\displaystyle \sum_{j=1}^N 1/p_j = 1$.

We continue with introducing proper function spaces.
For a measurable set $\sigma$ in ${\mathbb R}^n$ and $p\in [1, +\infty)$, we denote by 
$L^p (\sigma)$ the usual Lebesgue space of functions on $\sigma$.
When topologised under the standard norm $\| \cdot \|_{L^p (\sigma)} $ 
it is complete, i.e., a Banach space.
Of course, for $p = 2$ the norm is generated by the standard inner product
$(\cdot,\cdot)_{L^2 (\sigma)}$ and so $L^2 (\sigma)$ is a Hilbert space.
As usual, the scale $L^p (\sigma)$ continues to include the case $p = \infty$, too.
The integral H\"older inequality is one of the frequently used tools for us, to wit,
\begin{equation}
\label{eq.Hoelder}
   \| \prod_{j=1}^N a_j \|_{L^q (\sigma)}
 \leq
   \prod_{j=1}^N \| a_j \|_{L^{q_j} (\sigma)}
\end{equation}
for all $a_j \in L^{q_j} (\sigma)$, provided that
   $q \geq 1$,
   $q_j \geq 1$
and
   $\displaystyle \sum_{j=1}^N 1/q_j = 1/q$,
see for instance \cite[Corollary 2.6]{Ad03}.

For a  domain $\mathcal X$ in ${\mathbb R}^n$, we denote by
   $C^\infty_{\mathrm{comp}} ({\mathcal X})$
the set of all $C^\infty$ functions with compact support in $\mathcal X$.
If $s = 1, 2, \ldots$, we write $W^{s,p} ({\mathcal X})$ for the Sobolev space of all functions
$u \in L^p ({\mathcal X})$ whose generalised partial derivatives up to order $s$ belong to
$L^p ({\mathcal X})$. This is a Banach space with the standard norm 
$\| \cdot \|_{W^{s,p} ({\mathcal X})}$. 
Then
   $\mathring{W}^{s,p} ({\mathcal X})$
denotes the closure of the subspace $C^\infty_{\mathrm{comp}} ({\mathcal X})$ in $W^{s,p} ({\mathcal X})$.
The space $W^{s,p}_{\mathrm{loc}} ({\mathcal X})$ consists of functions belonging to $W^{s,p} (U)$ for
each relatively compact domain $U \subset {\mathcal X}$. 
As usual, in the case $p=2$ we simply write $H^{s} ({\mathcal X})$ instead of 
 $W^{s,2} ({\mathcal X})$.
This is a Hilbert space with the standard inner product 
$ (\cdot ,\cdot)_{H^{s} ({\mathcal X})}$. 
The scale of Sobolev spaces continues to include the case of negative $s$, too.
We will use only the space $H^{-s} ({\mathcal X})$ defined as the completion of
   $C^\infty_{\mathrm{comp}} ({\mathcal X})$
with respect to the norm
$$
   \| u \|_{H^{-s} ({\mathcal X})}
 = \sup_{{v \in C^\infty_{\mathrm{comp}} ({\mathcal X}) \atop  v \ne 0}}
   \frac{|(u,v)_{L^2 ({\mathcal X})}|}{\| v \|_{H^s ({\mathcal X})}}.
$$
It may be easily identified with the dual of $\mathring{H}^{s} ({\mathcal X})$, see for 
instance \cite[Theorem 3.12]{Ad03}.

Next, for    $s = 0, 1, \ldots$ and $0 \leq \lambda < 1$,
we denote by $C^{s,\lambda} (\overline{\mathcal{X}})$ 
the so-called H\"{o}lder spaces, see for instance
   \cite[Ch.~1, \S~1]{LadSoUr67},
   \cite[Ch.~1, \S~1]{Lady70}.
The normed spaces $C^{s,\lambda} (\overline{\mathcal{X}})$ with
   $s \in \mathbb{Z}_{+}$ and
   $\lambda \in [0,1)$
are known to be Banach spaces which admit the standard embedding theorems.

We are now ready to define proper spaces of periodic functions on ${\mathbb R}^3$.
For this purpose, fix any $\ell > 0$ and denote by ${\mathcal Q}$ be the cube $(0,\ell)^3$
of side length $\ell$.
Suppose $s \in \mathbb Z_{+}$.
We denote by $W^{s,p}$ the space of all functions
   $u \in W^{s,p}_\mathrm{loc} ({\mathbb R}^3)$
which satisfy the periodicity condition
\begin{equation}
\label{eq.period}
   u (x + \ell e_j) = u (x)
\end{equation}
for all
   $x \in {\mathbb R}^3$ and
   $1 \leq j \leq 3$,
where $e_j$ is the $j\,$-th unit basis vector in ${\mathbb R}^3$.
This is a Banach space with the norm
   $\| u \|_{W^{s,p}} := \| u \|_{W^{s,p} ({\mathcal Q})}$.
The space $H^{s}$ is obviously a Hilbert space endowed with the inner product
$$
   (u,v)_{H^{s}} = (u,v)_{H^{s} (\mathcal Q)}.
$$
The functions from $H^{s} $ can be easily characterised by their Fourier series
expansions with respect to the orthogonal system
   $\{ e^{\sqrt{-1} (k,z) (2 \pi / \ell)} \}_{k \in {\mathbb Z}^3}$
in $L^2 ({\mathcal Q})$.
Indeed, as the system consists of eigenfunctions of the Laplace operator $\varDelta$ 
corresponding to eigenvalues
   $\{ - (k,k) (2 \pi / \ell)^2 \}_{k \in {\mathbb Z}^3}$,
we see that the above scale of Sobolev spaces 
may be defined for all $s \in {\mathbb R}$ by
\begin{equation}
\label{eq.Hs.per}
   H^{s} 
 = \{  u = \sum_{k \in {\mathbb Z}^3} c_k (u) e^{\sqrt{-1} (k,z) (2 \pi / \ell)} :\,
      |c_0 (u)|^2 + \sum_{k \in {\mathbb Z}^3 \atop k \ne 0} (k,k)^{s} |c_k (u)|^2 < \infty
   \}.
\end{equation}
Clearly, the norm $\| \cdot \|_{H^{s} }$ is equivalent to the norm given by
$$
   \| u \|_{s}
 = \Big( \sum_{k \in {\mathbb Z}^3} (1 + (k,k))^s |c_k (u)|^2 \Big)^{1/2}.
$$
Traditionally, $\dot{H}^{s} $ stands for the subspace of $H^{s} $
consisting of the elements $u$ with $c_0 (u) = 0$ in \eqref{eq.Hs.per}.

Actually, this discussion leads us to the identification of the space $H^{s} $
with Sobolev functions on the torus ${\mathbb T}^3$, to wit,
   $H^{s}  \cong H^{s} ({\mathbb T}^3)$,
see \cite[\S~2.4]{Agra90} and elsewhere.

We also need an efficient tool for obtaining a priori estimates.
Namely, it is the Gagliardo-Nirenberg inequality, see \cite{Nir59} for functions on ${\mathbb R}^n$.
Its analogue for the torus reads for periodic functions as follows
   (see for instance \cite[\S~2.3]{Tema95}).
For $1 \leq  p \leq \infty$, set
\begin{equation}
\label{eq.Dj}
   \| \nabla^j u \|_{L^p ({\mathcal Q})}
 := \max_{|\alpha| = j} \| \partial^\alpha u \|_{L^p ({\mathcal Q})}.
\end{equation}
Then for any function $u \in L^{q_0}  \cap L^{s_0} $ satisfying
   $\nabla^{j_0} u \in L^{p_0} $  and
   $\nabla^{k_0} u \in L^{r_0} $
it follows that
\begin{equation}
\label{eq.L-G-N}
   \| \nabla^{j_0} u \|_{L^{p_0} (\mathcal{Q})}
 \leq
   c_1\, \| \nabla^{k_0} u \|^a_{L^{r_0} (\mathcal{Q})} \| u \|^{1-a}_{L^{q_0} (\mathcal{Q})}
 + c_2\, \| u \|_{L^{s_0} (\mathcal{Q})}
\end{equation}
whenever
   $s_0 \geq 1$ and
   $0 \leq a \leq 1$,
where
\begin{equation}
\label{eq.L-G-N.cond}
\begin{array}{rcl}
   \displaystyle
   \frac{1}{p_0}
 & = &
   \displaystyle
   \frac{j_0}{3} + a \left( \frac{1}{r_0} - \frac{k_0}{3} \right) + (1-a)\, \frac{1}{q_0},
\\
   \displaystyle
   \frac{j_0}{k_0}
 & \leq &
   a,
\end{array}
\end{equation}
the constants $c_1$ and $c_2$ depend on $j_0$, $k_0$, $s_0$, $p_0$, $q_0$ and $r_0$ but not
on $u$.

Next, for $s \in {\mathbb Z}_+$ and $\lambda \in [0,1)$, denote by $C^{s,\lambda} $
the space of all functions on ${\mathbb R}^3$ which belong to $C^{s,\lambda} (\overline{\mathcal X})$
for any bounded domain ${\mathcal X} \subset {\mathbb R}^3$ and satisfy \eqref{eq.period}.
The space $C^\infty $ of spatially periodic $C^\infty$-functions reduces to the intersection
of the spaces $C^{s,0} $ over $s \in \mathbb{Z}_0$.
It is endowed with the Fr\'echet topology given by the family of norms
   $\{ \| u \|_{C^{s,0} } \}_{s \in {\mathbb Z}_+}$.
Let $D'$ stand for the space of distributions on ${\mathbb T}^3$, i.e.,
the space of continuous linear functionals on the Fr\'echet space $C^\infty $ endowed
with the weak topology.

We will use the  symbol $\mathbf{L}^p$ for the space of periodic vector fields
$u = (u^1,u^2,u^3)$ on $\mathbb{R}^3$ with components $u_i$ in $L^p$.
The space is endowed with the norm
$$
   \| u \|_{\mathbf{L}^p}
 = \Big( \sum_{j=1}^3 \int_{{\mathcal Q}} |u^j (x)|^p dx \Big)^{1/p}.
$$
In a similar way we designate the spaces of periodic vector fields on $\mathbb{R}^3$ whose components
are of Sobolev or H\"{o}lder class.
We thus get
   ${\mathbf W}^{s,p} $,
   ${\mathbf H}^{s}$ and
   ${\mathbf C}^{s,\lambda}$,
respectively.
By
   ${\mathbf C}^\infty$ and
   ${\mathbf D}'$
are meant the spaces of infinitely smooth periodic vector fields or distribution vector 
fields on $\mathbb{T}^3$.

To continue, we recall basic formulas of vector analysis saying that
\begin{equation}
\label{eq.deRham}
\begin{array}{rclcrcl}
   \mathrm{rot}\, \nabla
 & =
 & 0,
 &
 & \mathrm{div}\, \nabla
 & =
 & \varDelta,
\\
   \mathrm{div}\, \mathrm{rot}
 & =
 & 0,
 &
 & - \mathrm{rot}\, \mathrm{rot} + \nabla\, \mathrm{div}
 & =
 & E_3 \varDelta
\end{array}
\end{equation}
where $E_3$ is the unit matrix of type $(3 \times 3)$.

Given any differential operator $A$ with $C^\infty$ coefficients on the space of vector
fields, we denote by $\ker (A)$ the subspace of $\mathbf{D}'$ consisting of all
vector fields satisfying $Au = 0$ in the sense of distributions in ${\mathbb R}^3$.
Furthermore, for an integer $s$, we write $V_s$ for the space
    ${\mathbf H}^{s} \cap \ker (\mathrm{div})$.
The designations $H$ and $V$ are usually used for $V_0$ and $V_1$, respectively,
   see \cite[\S~2.1]{Tema95}.
If $s = 1, 2, \ldots$, the dual $V'_s$ of $V_s$ can be identified with the completion of
${\mathbf C}^\infty  $ with respect to the `negative' norm
\begin{equation}
\label{eq.v.dual}
   \| u \|_{V'_s}
 = \sup_{{v \in V_s \atop  v \ne 0}}
   \frac{|(u,v)_{\mathbf{L}^2 }|}{\| v \|_{\mathbf{H}^{s}  }}.
\end{equation}

In order to characterize the space $V_s$ we denote by $\mathbb{N}_{2,3}$ the set of all natural
numbers that can represented as $(k,k) = k_1^2 + k_2^2 + k_3^2$, where
   $k = (k_1,k_2,k_3)$
is a triple of natural numbers.
For $m \in \mathbb{N}_{2,3}$, let $S_m$ be the finite-dimensional linear span of the system
   $\{ e^{\sqrt{-1} (k,z) (2 \pi / \ell)} \}_{(k,k) = m}$
of eigenfunctions of the Laplace operator, and let $\mathbf{S}_m$ be the corresponding space
of vector fields on $\mathbb{R}^3$. 

Thus, we arrive at the following useful statements, cf. also \cite{Saks04}.
\begin{lemma}
\label{l.basis.V}
Let $m \in {\mathbb N}_{2,3}$.
There are orthonormal bases
   $\{ v_{m,j} \}_{j=1}^{J_m}$ and
   $\{ w_{m,k} \}_{k=1}^{K_m}$
in the spaces
   $\mathbf{S}_m \cap \ker (\mathrm{div})$ and
   $\mathbf{S}_m \cap \ker (\mathrm{rot})$,
respectively, with respect to the unitary structure of $\mathbf{L}^2   $, such that
$$
\begin{array}{rcccl}
   \mathrm{rot} \circ \mathrm{rot}\, v_{m,j}
 & =
 & -\, \varDelta v_{m,j}
 & =
 & m (2 \pi / \ell)^2 v_{m,j},
\\
   \nabla \circ \mathrm{div}\, w_{m,k}
 & =
 & \varDelta w_{m,k}
 & =
 & -\, m (2 \pi / \ell)^2 w_{m,k}
\end{array}
$$
for all
   $j = 1, \ldots, J_m$ and
   $k = 1, \ldots, K_m$.
\end{lemma}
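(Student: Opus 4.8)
The plan is to exploit the fact that $\mathbf{S}_m$ is a finite-dimensional subspace of $\mathbf{L}^2$ all of whose elements are eigenfunctions of the Laplacian, and then to read off the two claimed identities directly from the vector-analysis formulas \eqref{eq.deRham}. First I would expand a generic element $u \in \mathbf{S}_m$ as a vector Fourier series $u = \sum_{(k,k)=m} a_k\, e^{\sqrt{-1} (k,x) (2\pi/\ell)}$ with coefficient vectors $a_k \in \mathbb{C}^3$. Since each exponential with $(k,k)=m$ is an eigenfunction of $\varDelta$ for the eigenvalue $- m (2\pi/\ell)^2$, linearity gives $- \varDelta u = m (2\pi/\ell)^2 u$ for \emph{every} $u \in \mathbf{S}_m$, with no restriction on the $a_k$. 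Because the number of lattice points $k$ with $(k,k)=m$ is finite, the space $\mathbf{S}_m$ is finite-dimensional, and hence so are its subspaces $\mathbf{S}_m \cap \ker (\mathrm{div})$ and $\mathbf{S}_m \cap \ker (\mathrm{rot})$.

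Next I would obtain the two eigenvalue relations from the identity $- \mathrm{rot}\, \mathrm{rot} + \nabla\, \mathrm{div} = E_3 \varDelta$ in \eqref{eq.deRham}. For $u \in \mathbf{S}_m \cap \ker (\mathrm{div})$ the term $\nabla\, \mathrm{div}\, u$ vanishes, so $\mathrm{rot}\, \mathrm{rot}\, u = - \varDelta u = m (2\pi/\ell)^2 u$; for $u \in \mathbf{S}_m \cap \ker (\mathrm{rot})$ the term $\mathrm{rot}\, \mathrm{rot}\, u$ vanishes, so $\nabla\, \mathrm{div}\, u = \varDelta u = - m (2\pi/\ell)^2 u$. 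Thus every element of the respective subspace already satisfies the required identity, and no special choice of vectors is needed to guarantee it; this is the crux that makes the statement almost free.

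Finally, the existence of the orthonormal bases $\{ v_{m,j} \}$ and $\{ w_{m,k} \}$ is then immediate: both subspaces are finite-dimensional subspaces of the Hilbert space $\mathbf{L}^2$, so applying Gram-Schmidt to any basis produces an orthonormal one, each vector of which automatically obeys the stated relation by the previous step. Since the conditions $\mathrm{div}\, u = 0$ and $\mathrm{rot}\, u = 0$ translate coefficientwise into $k \cdot a_k = 0$ and $k \times a_k = 0$, which cut out complementary $2$- and $1$-dimensional subspaces of $\mathbb{C}^3$ that are Hermitian-orthogonal, one sees in passing that these two bases together span $\mathbf{S}_m$ orthogonally (a discrete Helmholtz decomposition), fixing the dimensions $J_m$ and $K_m$. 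I expect no genuine analytic obstacle here; the only thing requiring care is the bookkeeping with the Fourier coefficients, the content being purely linear-algebraic once \eqref{eq.deRham} is in hand.
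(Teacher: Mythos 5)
Your proof is correct. The paper itself gives no argument for this lemma --- it is stated with only a reference to \cite{Saks04} --- so there is nothing to compare line by line; your self-contained derivation is exactly the kind of ``straightforward'' computation the authors evidently have in mind (they dismiss the closely related Proposition \ref{p.Vs} with one sentence). The two key observations are both sound: every element of $\mathbf{S}_m$ is automatically a $\varDelta$-eigenfunction for the eigenvalue $-m(2\pi/\ell)^2$, so the identity $-\mathrm{rot}\,\mathrm{rot} + \nabla\,\mathrm{div} = E_3\varDelta$ from \eqref{eq.deRham} yields the stated relations on the two kernels with no freedom to spoil them, and Gram--Schmidt in the finite-dimensional subspaces of $\mathbf{L}^2$ then produces the orthonormal bases. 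Your coefficientwise remark that $k\cdot a_k=0$ and $k\times a_k=0$ cut out Hermitian-orthogonal complements of dimensions $2$ and $1$ in $\mathbb{C}^3$ correctly explains why the two families together span $\mathbf{S}_m$ orthogonally. The only bookkeeping point worth a sentence in a written-up version is that the vector fields are real-valued, so one should pass from the exponentials to real combinations (pairing $k$ with $-k$, both of which lie in the sphere $(k,k)=m$); this affects nothing in the argument.
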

We introduce the Fourier coefficients of a field $u \in {\mathbf D}'   $ with
respect to the system $\{ e_1, e_2, e_3, v_{m,j} \}$:
$$
\begin{array}{rcl}
   c^{0,j} (u)
 & =
 & \langle u,e_j \rangle
/ (e_j,e_j)_{\mathbf{L}^2},
\\
   c^{m,j} (u)
 & =
 & \langle u,v_{m,j} \rangle .
\end{array}
$$

\begin{proposition}
\label{p.Vs}
Let $s \in \mathbb{Z}$.
The system
$
   \{ e_1, e_2, e_3, v_{m,j}, w_{m,k} \}
$
is an orthogonal basis in $\mathbf{H}^s   $.
In particular, the space $V_{s}$ consists of all vector fields $u \in \mathbf{H}^s   $
which satisfy
$$
   u
 = \sum_{j=1}^3 c_{0,j} (u) e_i
 + \sum_{m \in {\mathbb N}_{2,3}} \sum_{j=1}^{J_m} c^{m,j} (u) v_{m,j},
$$
where
$
   \displaystyle
   \sum_{m \in {\mathbb N}_{2,3}} \sum_{i=1}^{J_m}  m^{s}\, |c_{m,i} (u)|^2 < \infty.
$
\end{proposition}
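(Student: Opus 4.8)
The plan is to build the orthogonal basis of $\mathbf{H}^s$ mode by mode, starting from the scalar exponential basis and carrying out a Helmholtz splitting on each finite-dimensional eigenspace. First I would recall that, by the scalar characterisation \eqref{eq.Hs.per} applied componentwise, the system $\{ e_j\, e^{\sqrt{-1} (k,z) (2\pi/\ell)} : 1 \le j \le 3,\ k \in {\mathbb Z}^3 \}$ is an orthogonal basis of $\mathbf{H}^s$ for every $s \in {\mathbb Z}$, the exponentials attached to distinct $k$ being mutually orthogonal in each $H^s$ since they are eigenfunctions of $\varDelta$ and so diagonalise its inner product. Grouping the modes according to the value $(k,k) = m$ then exhibits $\mathbf{H}^s$ as the orthogonal Hilbert sum of the constant fields $\mathrm{span}\{ e_1, e_2, e_3 \}$ (the part $k = 0$) and the finite-dimensional spaces $\mathbf{S}_m$, $m \in {\mathbb N}_{2,3}$.

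The crucial computation is the mode-wise Helmholtz decomposition. For a single vector mode $v\, e^{\sqrt{-1} (k,z) (2\pi/\ell)}$ with $v \in {\mathbb C}^3$ and $k \ne 0$, I would compute that its divergence is proportional to $(k,v)$ and its curl to $k \times v$; hence the mode is divergence-free exactly when $v \perp k$ and curl-free exactly when $v \parallel k$. As ${\mathbb C}^3 = k^\perp \oplus {\mathbb C} k$ is an orthogonal direct sum, summing over the finitely many $k$ with $(k,k) = m$ produces the $\mathbf{L}^2$-orthogonal decomposition $\mathbf{S}_m = (\mathbf{S}_m \cap \ker (\mathrm{div})) \oplus (\mathbf{S}_m \cap \ker (\mathrm{rot}))$. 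By Lemma \ref{l.basis.V} the two summands are spanned by the orthonormal systems $\{ v_{m,j} \}$ and $\{ w_{m,k} \}$, which therefore constitute an orthogonal basis of $\mathbf{S}_m$. On $\mathbf{S}_m$ the inner product of $\mathbf{H}^s$ is merely $m^s$ times that of $\mathbf{L}^2$, so $\mathbf{L}^2$-orthogonality upgrades to $\mathbf{H}^s$-orthogonality; together with the orthogonality across distinct $m$ from the previous step this shows that $\{ e_1, e_2, e_3, v_{m,j}, w_{m,k} \}$ is an orthogonal basis of $\mathbf{H}^s$.

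It remains to read off the description of $V_s = \mathbf{H}^s \cap \ker (\mathrm{div})$. Expanding an arbitrary $u \in \mathbf{H}^s$ in this basis, I would observe that the $e_j$ are constant and hence divergence-free, the $v_{m,j}$ are divergence-free by construction, while each $w_{m,k}$ satisfies $\nabla\, \mathrm{div}\, w_{m,k} = \varDelta w_{m,k} = - m (2\pi/\ell)^2 w_{m,k} \ne 0$ for $m \ne 0$, so that $\mathrm{div}\, w_{m,k} \ne 0$. Since $\mathrm{div}$ carries the $w_{m,k}$ to pairwise orthogonal nonzero scalar modes in $H^{s-1}$, it is injective on their span; consequently $\mathrm{div}\, u = 0$ forces every coefficient of the $w_{m,k}$ to vanish and leaves precisely the asserted expansion of $u$ over $\{ e_j, v_{m,j} \}$. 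The summability condition $\sum_{m} \sum_{j} m^s\, |c^{m,j} (u)|^2 < \infty$ is then nothing but the membership $u \in \mathbf{H}^s$, in view of $\| v_{m,j} \|_{\mathbf{H}^s}^2 \sim m^s$.

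I expect the genuinely delicate point to be bookkeeping rather than substance: checking the mode-wise splitting carefully and confirming that the $\mathbf{L}^2$-orthonormal systems of Lemma \ref{l.basis.V} stay orthogonal (though no longer normalised) in the $\mathbf{H}^s$ inner product for every integer $s$, including the negative values, where $\mathbf{H}^s$ is understood through its Fourier-series norm. Everything else reduces to the completeness of the scalar exponential system already recorded in \eqref{eq.Hs.per}.
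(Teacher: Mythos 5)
Your proposal is correct, and it is in essence the argument the paper has in mind: the paper's own ``proof'' consists only of the remark that the arguments are straightforward, and your mode-by-mode Helmholtz splitting ${\mathbb C}^3 = k^\perp \oplus {\mathbb C}k$ on each eigenspace $\mathbf{S}_m$, followed by the observation that the $\mathbf{H}^s$ inner product restricted to $\mathbf{S}_m$ is a positive multiple of the $\mathbf{L}^2$ one, is exactly the standard way to make it precise. The only point worth double-checking in your write-up is the convergence bookkeeping you already flag, namely that $\mathrm{div} : \mathbf{H}^s \to H^{s-1}$ is continuous so that the vanishing of the $w_{m,k}$-coefficients can be read off termwise from $\mathrm{div}\, u = 0$.
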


It is worth pointing out that the orthogonality in the space $\mathbf{H}^s   $ refers to the
inner product
$$
   (u,v)_s
 = c_0 (u) \overline{c_0 (v)}
 + \sum_{\lambda_j \neq 0} \lambda_j^s\, c_j (u) \overline{c_j (v)},
$$
where $c_j (u)$ are the Fourier coefficients of $u$ with respect to an orthonormal system of eigenfunctions
of the Laplace operator in $\mathbf{L}^2   $ corresponding to the eigenvalues $\lambda_j$.

\begin{proof}
The arguments are straightforward.
Clearly, the series for $u$ is tacitly assumed to converge in the norm of $\mathbf{H}^s   $.
\end{proof}

Denote by $\mathbf{P}$ the orthogonal projection of $\mathbf{L}^2 $ onto $V_0$
which is usually referred to as the Helmholtz projection.
By Proposition \ref{p.Vs}, we 
get 
\begin{equation*}
   \mathbf{P} = \varPi + \mathrm{rot}^\ast \mathrm{rot}\, \varphi
\end{equation*}
where the operators
   $\varPi$ and
   $\varphi$
are given by
\begin{equation*}
\begin{array}{rcl}
   \varPi u
 & =
 & c_0 (u),
\\
   \varphi u
 & =
 & \displaystyle
   -\, \sum_{k \in \mathbb{Z}^3 \atop k \neq 0}
   \frac{c_k (u)}{(k,k) (2 \pi / \ell)^2}\, e^{\sqrt{-1} (k,z) (2 \pi / \ell)}
\end{array}
\end{equation*}
for
$
   \displaystyle
   u = \sum_{k \in \mathbb{Z}^3 \atop k \neq 0} c_k (u) e^{\sqrt{-1} (k,z) (2 \pi / \ell)}.
$
In particular,
 $\mathbf{P}$ is actually the orthogonal projection of $\mathbf{H}^s $ onto
$V_s$ with respect to the unitary structure of $\mathbf{H}^s   $ whenever $s \in {\mathbb Z}_+$.

\begin{lemma}
\label{l.projection.commute}
For each $p \in (1,\infty)$ there is a positive constant $C (p)$ with the property that
\begin{equation}\label{eq.Pi.Lp}
   \| \mathbf{P} u \|_{\mathbf{L}^p   }
 \leq
   C (p)\, \| u \|_{\mathbf{L}^p    }
\end{equation}
for all $u \in \mathbf{L}^p   $.
If moreover $u \in \mathbf{H}^1   $ then
$
   \partial_j (\mathbf{P} u) = \mathbf{P} (\partial_j u)
$
for each $1 \leq j \leq 3$.
\end{lemma}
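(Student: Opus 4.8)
The plan is to exploit the explicit Fourier-multiplier representation of $\mathbf{P}$ recorded just above the statement. The case $p = 2$ is immediate, since $\mathbf{P}$ is by construction the orthogonal projection of $\mathbf{L}^2$ onto $V_0$ and therefore has operator norm one; the whole content of the first assertion lies in the range $1 < p < \infty$ with $p \neq 2$, and the argument for those $p$ will not use the $L^2$ structure at all.

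First I would isolate the singular part of $\mathbf{P}$. Restricted to the subspace of mean-zero fields (those $u$ with $c_0(u) = 0$), the decomposition $\mathbf{P} = \varPi + \mathrm{rot}^\ast \mathrm{rot}\, \varphi$ together with the vector-analysis identity $-\,\mathrm{rot}\,\mathrm{rot} + \nabla\,\mathrm{div} = E_3 \varDelta$ from \eqref{eq.deRham} identifies $\mathbf{P}$ with a constant-coefficient Fourier multiplier of order zero, the classical Leray--Helmholtz projection, whose matrix symbol on the mode $k \neq 0$ is $E_3 - (k \otimes k)/(k,k)$. The complementary contribution $\varPi u = c_0(u)$ is harmless: on the bounded cube $\mathcal{Q}$ it is dominated in $\mathbf{L}^p$ by a multiple of $\| u \|_{\mathbf{L}^1}$, and hence, by the integral H\"older inequality \eqref{eq.Hoelder}, by $\| u \|_{\mathbf{L}^p}$.

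Thus the first claim reduces to the $L^p$-boundedness, for $1 < p < \infty$, of each scalar periodic multiplier with symbol $m_{ij}(k) = k_i k_j / (k,k)$, the entries of $\nabla\,\mathrm{div}\,\varDelta^{-1}$. These are exactly the (compositions of) periodic Riesz transforms: the symbol $m_{ij}$ is homogeneous of degree zero and $C^\infty$ away from the origin, so the Marcinkiewicz--Mikhlin multiplier theorem on the torus applies and yields the bound with a constant $C(p)$ depending only on $p$. I regard the transference of the Euclidean multiplier/Calder\'on--Zygmund theory to the periodic setting as the one genuinely nontrivial ingredient here, and hence as the main obstacle; everything else is bookkeeping.

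The second assertion is then essentially formal. Both $\partial_j$ and $\mathbf{P}$ are translation-invariant operators, i.e.\ Fourier multipliers: $\partial_j$ carries the scalar symbol $\sqrt{-1}\, k_j (2\pi/\ell)$, while $\mathbf{P}$ carries the matrix symbol described above, which is independent of that scalar factor. A scalar multiplier commutes with a matrix multiplier mode by mode, so $\partial_j (\mathbf{P} u)$ and $\mathbf{P} (\partial_j u)$ have identical Fourier coefficients. For $u \in \mathbf{H}^1$ both fields lie in $\mathbf{L}^2$, and the equality of their Fourier series yields $\partial_j (\mathbf{P} u) = \mathbf{P} (\partial_j u)$ for each $1 \le j \le 3$.
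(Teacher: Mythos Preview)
Your proposal is correct and follows essentially the same route as the paper: both identify the symbol of $\mathbf{P}$ as $E_3 - (k\otimes k)/(k,k)$ on nonzero modes and then invoke the $L^p$ theory for order-zero operators, with the paper phrasing this as boundedness of a zero-order pseudodifferential operator on $\mathbb{T}^3$ (citing Taylor) while you phrase it as the Marcinkiewicz--Mikhlin multiplier theorem for periodic Riesz transforms; the commutation with $\partial_j$ is handled in both cases by the observation that constant-coefficient Fourier multipliers commute.
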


\begin{proof}
By the very construction, we may identify $\mathbf{P}$ with the $L^2({\mathbb T}^3)\,$-orthogonal
projection and then specify within elliptic pseudodifferential operators of order zero on the
compact closed manifold ${\mathbb T}^3$, see for instance
   \cite[Ch.~I, and Ch.~X1]{Tay81} or
   \cite[\S~2]{Agra90}.

It is well known that the Fourier multipliers (see for instance \cite[Ch. I, and Ch. X1]{Tay81})
are continuous linear self-mappings of $L^p ({\mathbb R}^3)$, if $p \in (0,+\infty)$.
Let $\varPhi$ be the standard fundamental solution of convolution type of the Laplace operator
in $\mathbb{R}^3$.
The operator $\mathrm{rot} \circ \mathrm{rot} \circ \varPhi$ acting on vector fields $u$ with entries
from the Schwartz space $\mathcal{S} ({\mathbb R}^3)$ is actually a matrix Fourier multiplier given by
$
   F^{-1} \left( a (\xi)\, Fu \right)
$
for $u \in \mathbf{L}^2 ({\mathbb R}^3)$, where
   $Fu$ stands for the Fourier transform of $u$,
   by $F^{-1}$ is meant the inverse Fourier transform,
and
   $a (\xi)$ can be identified with the $(3 \times 3)\,$-matrix
$$
   a (\xi) = E_3 - \Big( \frac{\xi_i \xi_j}{|\xi|^2} \Big)
$$
for $\xi \in {\mathbb R}^3 \setminus \{ 0 \}$.
Then, in view of the obvious connection between $\varPhi$ and $\varphi$, on identifying 
periodic functions with functions on the torus ${\mathbb T}^3$ we see that $\mathbf{P}$ is 
a pseudodifferential operator of order zero on the torus. This enables us to apply 
\cite[Ch.~X1, Theorem~2.2]{Tay81} and to conclude that the operator $\mathbf{P}$
maps $\mathbf{L}^p   $ continuously into itself for all $p \in (1,+\infty)$, 
that gives precisely \eqref{eq.Pi.Lp}.

Finally, the commutation relation
$
   \partial_j (\mathbf{P} u) = \mathbf{P} (\partial_j u)
$
is valid since the scalar operator $\partial_j$ commutes with the operators $\mathrm{rot}$ and $\varPhi$
by the very construction.
\end{proof}

We also recall that the Hodge's Theory on the torus, 
see for instance   \cite[\S~2.4]{Agra90},
  \cite[Proposition~1.17]{BerMaj02},
   \cite[\S~2.2]{Tema95}, allows us to introduce bounded linear operators $(- \varDelta)^r$
acting from $H^s   $ into $H^{s-2r}   $ by
$$
   (- \varDelta)^r u
 = -
   \sum_{k \in {\mathbb Z}^3 \atop k \ne 0}
   c_k (u) \left( (k,k) (2 \pi / \ell)^2 \right)^{r} e^{\sqrt{-1} (k,z) (2 \pi / \ell)}
$$
for
$
   \displaystyle
   u = \sum_{k \in {\mathbb Z}^3} c_k (u) e^{\sqrt{-1} (k,z) (2 \pi / \ell)}.
$
If $u \in V_s$ then the action of the operator $\varDelta^r$ reduces to
$$
   (- \varDelta)^r u
 = \sum_{m \in {\mathbb N}_{2,3}}
   \left( m (2 \pi / \ell)^2 \right)^{r} \sum_{j=1}^{J_m} c^{m,j} (u)\, b_{m,j}.
$$
On integrating by parts we obtain
\begin{equation}
\label{eq.by.parts.0}
   \sum_{|\alpha| = j} \| \partial^\alpha u \|^2_{L^2 }
 = \| (- \varDelta)^{j/2} u \|^2_{L^2 }
\end{equation}
for all $u \in H^j $, if $j \in {\mathbb Z}_+$.

\begin{remark}
\label{r.Delta.r}
As all norms on a finite dimensional space are equivalent, there are positive constants $c_1$
and $c_2$ such that
$$
   c_1\, \| (- \varDelta)^{j/2} u \|^2_{L^2   }
 \leq
   \| \nabla^j u \|_{L^2 ({\mathcal Q})}
 \leq
   c_2\, \| (- \varDelta)^{j/2} u \|^2_{L^2   }
$$
for all $u \in H^j   $.
\end{remark}

Thus, in the special case $p = 2$ we may always replace the norm
   $\| \nabla^j u \|_{L^p ({\mathcal Q})}$
with the norm
   $\| (- \varDelta)^{j/2} u \|_{L^2   }$.

\begin{proposition}
\label{c.Sob.d}
For any $s \in {\mathbb Z}$, the differential operator $\nabla$ induces a continuous linear
operator
$$
   \nabla : H^{s+1}  \to \mathbf{H}^{s}  \cap \ker (\mathrm{rot})
$$
which is Fredholm.
Its null-space coincides with the one-dimensional space $\mathbb R$ and,
for any $w \in \mathbf{H}^{s} $, the following are equivalent:

1)
$w = \nabla p$ for some function $p \in H^{s+1} $.

2)
$w \in \dot{H}^{s}  \cap \ker (\mathrm{rot})$.

3)
$\mathbf{P} w = 0$.
\end{proposition}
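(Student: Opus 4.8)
The plan is to diagonalize every operator involved by passing to Fourier coefficients with respect to the system $\{ e^{\sqrt{-1} (k,z) (2 \pi / \ell)} \}_{k \in {\mathbb Z}^3}$, writing a field $w \in \mathbf{H}^s$ as $w = \sum_{k} c_k (w)\, e^{\sqrt{-1} (k,z) (2 \pi / \ell)}$ with vector coefficients $c_k (w) \in {\mathbb C}^3$ and using the norm $\| \cdot \|_s$ introduced above. Under this transform the operators become multipliers: $\nabla p$ has $k\,$-th coefficient $\sqrt{-1} (2\pi/\ell)\, c_k (p)\, k$, while $\mathrm{div}\, w$ and $\mathrm{rot}\, w$ have coefficients proportional to $(k, c_k (w))$ and $k \times c_k (w)$, respectively. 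Thus $\mathrm{div}\, w = 0$ reads $c_k (w) \perp k$ and $\mathrm{rot}\, w = 0$ reads $c_k (w) \parallel k$, in each case for $k \neq 0$, with no constraint on $c_0 (w)$.

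First I would record continuity and the mapping property. Passing to $\nabla$ multiplies the $k\,$-th coefficient by a factor of modulus $(2\pi/\ell)|k| \le (2\pi/\ell)(1+(k,k))^{1/2}$, so one reads off from $\| \cdot \|_s$ that $\| \nabla p \|_{\mathbf{H}^s} \le C\, \| p \|_{H^{s+1}}$; and $\mathrm{rot}\, \nabla = 0$ from \eqref{eq.deRham} places the image in $\ker (\mathrm{rot})$. The kernel is immediate: $\nabla p = 0$ forces $c_k (p)\, k = 0$, hence $c_k (p) = 0$ for every $k \neq 0$, so $p$ is constant and $\ker (\nabla) = {\mathbb R}$.

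Next I would settle the three equivalences in Fourier terms. For (1)$\Rightarrow$(2), $w = \nabla p$ has coefficients parallel to $k$ (so $\mathrm{rot}\, w = 0$) and vanishing mean $c_0 (w) = 0$ (so $w \in \dot H^s$). For (2)$\Rightarrow$(1), the condition $\mathrm{rot}\, w = 0$ yields $c_k (w) = \lambda_k k$ for scalars $\lambda_k$ when $k \neq 0$, and $c_0 (w) = 0$; I then define $p$ by $c_k (p) := \lambda_k / (\sqrt{-1}(2\pi/\ell))$ for $k \neq 0$ and $c_0 (p) = 0$, so that $\nabla p = w$ by construction, and verify $p \in H^{s+1}$ using $|c_k (w)|^2 = |\lambda_k|^2 (k,k)$ together with the two-sided bound $(k,k) \le 1+(k,k) \le 2 (k,k)$ valid for $k \neq 0$, which renders the weighted sums defining $\| p \|_{s+1}^2$ and $\| w \|_s^2$ comparable. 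For (2)$\Leftrightarrow$(3) I would invoke the explicit formula for $\mathbf{P}$: for $k \neq 0$ it acts as the orthogonal projection of $c_k (w)$ onto the plane $k^\perp$, while on the mean it acts as the identity, constant fields being divergence free. Hence $\mathbf{P} w = 0$ is equivalent to $c_k (w) \parallel k$ for all $k \neq 0$ and $c_0 (w) = 0$, which is exactly condition (2).

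Finally, for the Fredholm assertion I would assemble the pieces: the kernel is the one-dimensional space ${\mathbb R}$, and by (1)$\Leftrightarrow$(2) the range of $\nabla$ equals $\dot H^s \cap \ker (\mathrm{rot})$. This range is the common kernel, inside $\mathbf{H}^s \cap \ker (\mathrm{rot})$, of the three continuous functionals $w \mapsto c_0 (w)$ recording the components of the mean, so it is closed and of codimension exactly $3$; therefore $\nabla$ is Fredholm, of index $1 - 3 = -2$. I expect the only genuine technical point to be the norm bookkeeping in (2)$\Rightarrow$(1)---confirming that the reconstructed potential $p$ lies in $H^{s+1}$ rather than merely in a larger space---which is resolved by the elementary comparison of weights above; everything else is a direct reading of the operators as Fourier multipliers.
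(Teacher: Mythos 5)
Your argument is correct. Note, however, that the paper does not actually prove this proposition: its ``proof'' is a one-line citation to \cite[Proposition~1.18]{BerMaj02}. What you have written is a self-contained verification of that Hodge-type statement by diagonalizing $\nabla$, $\mathrm{rot}$, $\mathrm{div}$ and $\mathbf{P}$ as Fourier multipliers, which is precisely the standard mechanism behind the cited result and is fully consistent with the machinery the paper itself sets up (the characterisation of $H^s$ by the weighted sums $\| \cdot \|_s$, Proposition~\ref{p.Vs}, and the explicit formula $\mathbf{P} = \varPi + \mathrm{rot}^\ast \mathrm{rot}\, \varphi$, which indeed acts as the identity on the mean and as the projection onto $k^{\perp}$ on each nonzero mode). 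All the individual steps check out: the multiplier bound $(2\pi/\ell)|k| \leq (2\pi/\ell)(1+(k,k))^{1/2}$ gives continuity uniformly in the sign of $s$; the reconstruction $c_k(p) = \lambda_k / (\sqrt{-1}\,(2\pi/\ell))$ together with $|c_k(w)|^2 = |\lambda_k|^2 (k,k)$ and $(k,k) \leq 1+(k,k) \leq 2(k,k)$ puts $p$ in $H^{s+1}$ with a two-sided norm comparison; and identifying the range with the common kernel of the three mean-value functionals inside $\mathbf{H}^s \cap \ker(\mathrm{rot})$ gives closedness, codimension $3$, and index $1-3=-2$, consistent with the remark following the proposition that $\nabla$ is an isomorphism of $\dot{H}^{s+1}$ onto $\dot{\mathbf{H}}^{s} \cap \ker(\mathrm{rot})$. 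The only point you leave tacit is that the reconstructed $p$ is real-valued, which follows at once from $c_{-k}(w) = \overline{c_k(w)}$ forcing $\lambda_{-k} = -\overline{\lambda_k}$ and hence $c_{-k}(p) = \overline{c_k(p)}$; this is a triviality, not a gap.
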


Hence it follows, in particular, that $\nabla$ establishes an isomorphism between
   $\dot{H}^{s+1} $ and
   $\dot{\mathbf{H}}^{s}  \cap \ker (\mathrm{rot})$.

\begin{proof}
See for instance \cite[Proposition 1.18]{BerMaj02}. 
\end{proof}

We will also use the so-called Bochner spaces of functions of $(x,t)$ in the strip
$\mathbb{R}^3 \times I$, where $I = [0,T]$.
Namely, if
   $\mathcal B$ is a Banach space (possibly, a space of functions on $\mathbb{R}^3$ and
   $p \geq 1$,
we denote by $L^p (I,{\mathcal B})$ the Banach space of all measurable mappings
   $u : I \to {\mathcal B}$
with finite norm
$$
   \| u \|_{L^p (I,{\mathcal B})}
 := \| \|  u (\cdot,t) \|_{\mathcal B} \|_{L^p (I)},
$$
see for instance \cite[Ch.~III, \S~1]{Tema79}.
In the same line stays the space $C (I,{\mathcal B})$, i.e., it is the Banach space of all
mappings $u : I \to {\mathcal B}$ with finite norm
$$
   \| u \|_{C (I,{\mathcal B})}
 := \sup_{t \in I} \| u (\cdot,t) \|_{\mathcal B}.
$$
After Leray \cite{Lera34a,Lera34b}, a great attention was paid to weak solutions to
equations \eqref{eq.NS} in cylinder domains in ${\mathbb R}^3\times [0,\infty)$.
Considering the Navier-Stokes Equations in the Bochner spaces yields the classical existence
theorem for the weak solutions to \eqref{eq.NS}.
To formulate it we set
$$
   \mathbf{D} u = \sum_{j=1}^3 u^j \partial_j u
$$
for a vector field $u = (u^1, u^2, u^3)$.

\begin{theorem}
\label{t.exist.NS.weak}
Given a pair $(f,u_0) \in L^2 (I,V'_1) \times V_0$, there exists a vector field
  $u \in L^\infty (I,V_0) \cap L^2 (I,V_1)$ with
  $\partial_t u \in L^1 (I,V'_1)$, satisfying
\begin{equation}
\label{eq.NS.weak}
\left\{
   \begin{array}{rcl}
   \displaystyle
   \frac{d}{dt} (u,v)_{\mathbf{L}^2   }
 + \mu \sum_{|\alpha| = 1} (\partial^\alpha u, \partial^\alpha v)_{\mathbf{L}^2   }
 & =
 & \langle f - \mathbf{D} u, v \rangle,
\\
   u (\cdot,0)
 & =
 & u_0
   \end{array}
\right.
\end{equation}
for all $v \in V_1$.
\end{theorem}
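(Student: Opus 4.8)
The plan is to construct the desired field by the classical Faedo--Galerkin method, taking as Galerkin basis the divergence-free eigenfunctions of the Stokes operator supplied by Proposition \ref{p.Vs}. First I would enumerate the system $\{v_{m,j}\}$ as a single sequence $\{w_k\}_{k=1}^{\infty}$, orthonormal in $\mathbf{L}^2$ and orthogonal in each $\mathbf{H}^s$, set $V^N=\mathrm{span}\{w_1,\dots,w_N\}$, and seek an approximate solution $u_N(t)=\sum_{k=1}^{N}g_k^N(t)\,w_k$ satisfying \eqref{eq.NS.weak} tested against $v=w_k$ for $1\le k\le N$, with $u_N(0)$ the $\mathbf{L}^2$-projection of $u_0$ onto $V^N$. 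Because the eigenfunctions diagonalise the Laplacian, the coefficients $g_k^N$ solve a system of ordinary differential equations whose right-hand side is affine plus quadratic in $(g_1^N,\dots,g_N^N)$, hence locally Lipschitz, so the Picard--Lindel\"of theorem yields a unique solution on a maximal interval $[0,t_N)\subset I$.

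The second step is the energy estimate. Testing the Galerkin equation against $v=u_N$ itself, the crucial point is that the nonlinear contribution vanishes: since $\mathbf{D}u_N=(u_N\cdot\nabla)u_N$ and $\mathrm{div}\,u_N=0$, integration by parts over $\mathcal{Q}$ (with no boundary terms by periodicity) gives $\langle \mathbf{D}u_N,u_N\rangle=\tfrac12\int_{\mathcal{Q}}(u_N\cdot\nabla)|u_N|^2\,dx=0$. This leaves the identity
$$
   \frac{1}{2}\,\frac{d}{dt}\|u_N\|_{\mathbf{L}^2}^2
 + \mu\sum_{|\alpha|=1}\|\partial^\alpha u_N\|_{\mathbf{L}^2}^2
 = \langle f,u_N\rangle .
$$
Bounding the right-hand side by $\|f\|_{V'_1}\|u_N\|_{\mathbf{H}^1}$ and absorbing part of it through the Young inequality \eqref{eq.Young}, I would invoke the Gronwall lemma (Lemma \ref{l.Perov} with $\gamma=1$) to obtain estimates for $\|u_N(t)\|_{\mathbf{L}^2}$ and for $\int_0^T\|u_N\|_{\mathbf{H}^1}^2\,dt$ that are uniform in $N$. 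As these bounds are independent of $t_N$, the local solution cannot blow up and extends to all of $I$; thus $\{u_N\}$ is bounded in $L^\infty(I,V_0)\cap L^2(I,V_1)$.

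The third step is compactness, and this is where the principal obstacle lies. By the Banach--Alaoglu theorem the bounds give a subsequence (not relabelled) with $u_N\rightharpoonup u$ weakly-$\ast$ in $L^\infty(I,V_0)$ and weakly in $L^2(I,V_1)$; the linear terms pass to the limit at once. Weak convergence alone, however, does not control the quadratic term, so I would produce strong convergence $u_N\to u$ in $L^2(I,\mathbf{L}^2)$ by an Aubin--Lions compactness argument. For this one reads $\partial_t u_N$ off the Galerkin equation as the projection of $f-\mu(-\varDelta)u_N-\mathbf{P}\,\mathbf{D}u_N$, and estimates the nonlinear part in $V'_1$ by $\|\mathbf{D}u_N\|_{V'_1}\le\|u_N\|_{\mathbf{L}^4}^2$ using the H\"older inequality \eqref{eq.Hoelder}, followed by the Gagliardo--Nirenberg inequality \eqref{eq.L-G-N} in the form $\|u_N\|_{\mathbf{L}^4}^2\le c\,\|u_N\|_{\mathbf{L}^2}^{1/2}\|\nabla u_N\|_{\mathbf{L}^2}^{3/2}$; combined with the bounds of the previous step this yields $\{\partial_t u_N\}$ bounded in $L^{4/3}(I,V'_1)$. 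Since $V_1\hookrightarrow V_0$ is compact while $V_0\hookrightarrow V'_1$ is continuous, the Aubin--Lions lemma delivers the required strong convergence.

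Finally, with $u_N\to u$ strongly in $L^2(I,\mathbf{L}^2)$ and the test field $w_k$ smooth, I would move the derivative in $\langle\mathbf{D}u_N,w_k\rangle=-\int_{\mathcal{Q}}(u_N\cdot\nabla)w_k\cdot u_N\,dx$ onto $w_k$, so that the products $u_N^i u_N^j\to u^iu^j$ converge in $L^1$ and the nonlinear term passes to the limit, giving $\langle\mathbf{D}u_N,w_k\rangle\to\langle\mathbf{D}u,w_k\rangle$. Multiplying by a smooth scalar $\psi$, integrating over $I$ and letting $N\to\infty$, one obtains \eqref{eq.NS.weak} for every basis field $w_k$, hence by linearity and density for all $v\in V_1$. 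The bound on $\partial_t u_N$ survives in the limit, so $\partial_t u\in L^{4/3}(I,V'_1)\subset L^1(I,V'_1)$; and the initial condition $u(\cdot,0)=u_0$ is recovered from the convergence of the projected data together with the fact that the estimates force $u$ to be weakly continuous into $V_0$, permitting an integration by parts in $t$ against a $\psi$ with $\psi(0)=1$, $\psi(T)=0$. I expect the compactness step producing strong $\mathbf{L}^2$ convergence to be the main difficulty, exactly as in the Leray--Hopf theory.
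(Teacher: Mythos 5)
Your proposal is correct and follows essentially the same route as the paper, which proves this theorem by citing the classical Faedo--Galerkin construction of Temam and Lions and by pointing to its own proof of the linearised Theorem \ref{t.exist.NS.lin.weak}, where the identical scheme (eigenfunction basis from Proposition \ref{p.Vs}, energy estimate via \eqref{eq.Bu.zero} and Gronwall, Aubin--Lions compactness to get strong $L^2(I,\mathbf{L}^2)$ convergence, passage to the limit) is carried out. Your treatment of the nonlinear term via $\|\mathbf{D}u_N\|_{V_1'}\le\|u_N\|_{\mathbf{L}^4}^2$ and the resulting $L^{4/3}(I,V_1')$ bound on $\partial_t u_N$ is the standard and correct way to supply the compactness step that the paper only alludes to.
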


\begin{proof}
See
   \cite[\S~2.3, \S~2.4]{Tema95}
(or \cite[Ch.~II, Theorem~6.1]{Lion69}
 or \cite[Ch.~III, Theorem~3.1]{Tema79} for domains in ${\mathbb R}^3$
 or the proof of Theorem \ref{t.exist.NS.lin.weak} below).
\end{proof}

Perhaps the space $L^\infty (I,V_0) \cap L^2 (I,V_1)$ is too large in order to achieve even a
uniqueness theorem for \eqref{eq.NS.weak}.
The Bochner spaces $L^{s} (I, L^{r} ({\mathbb R}^3))$ with
\begin{equation}
\label{eq.s.r}
\begin{array}{ccc}
   \displaystyle \frac{2}{\mathfrak{s}} + \frac{3}{\mathfrak{r}} = 1,
 & 2 \leq \mathfrak{s}  < \infty,
 & 3 < \mathfrak{r} \leq \infty
\end{array}
\end{equation}
are well known to be uniqueness and regularity classes for the Navier-Stokes equations, see
   \cite{Pro59},
   \cite{Lady67},
   \cite{Serr62}.
The limit case
   $\mathfrak{s} =\infty$,
   $\mathfrak{r} = 3$
was added to the list in \cite{ESS03} but we will not discuss it here.

\begin{theorem}
\label{t.NS.unique}
Let $\mathfrak{s}$ and $\mathfrak{r}$ satisfy \eqref{eq.s.r}.
For each data
  $(f,u_0) \in L^2 (I,V'_1) \times V_0$,
the nonlinear Navier-Stokes equations 
\eqref{eq.NS.weak} possess at most one solution in the
space
   $L^\infty (I,V_0) \cap L^2 (I,V_1) \cap L^\mathfrak{s} (I,\mathbf{L}^\mathfrak{r}   )$.
\end{theorem}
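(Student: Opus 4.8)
The plan is to control the difference of two solutions directly by an energy argument. Suppose $u_1$ and $u_2$ both belong to $L^\infty (I,V_0) \cap L^2 (I,V_1) \cap L^{\mathfrak{s}} (I,\mathbf{L}^{\mathfrak{r}})$ and solve \eqref{eq.NS.weak} with the same data $(f,u_0)$, and set $w = u_1 - u_2$. Subtracting the two weak formulations and invoking the algebraic identity $\mathbf{D} u_1 - \mathbf{D} u_2 = (u_1 \cdot \nabla) w + (w \cdot \nabla) u_2$, I obtain
\[
   \frac{d}{dt} (w,v)_{\mathbf{L}^2} + \mu \sum_{|\alpha|=1} (\partial^\alpha w, \partial^\alpha v)_{\mathbf{L}^2} = - \langle (u_1 \cdot \nabla) w + (w \cdot \nabla) u_2, v \rangle
\]
for every $v \in V_1$, together with the initial condition $w (\cdot,0) = 0$.

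Next I would take $v = w$. Since $\mathrm{div}\, u_1 = 0$ and the fields are periodic, the term $\langle (u_1 \cdot \nabla) w, w \rangle$ vanishes, while integration by parts (using $\mathrm{div}\, w = 0$) rewrites the remaining contribution as $\int_{\mathcal Q} (w \cdot \nabla) w \cdot u_2\, dx$. This produces the energy identity
\[
   \frac{1}{2} \frac{d}{dt} \| w \|_{\mathbf{L}^2}^2 + \mu \| \nabla w \|_{\mathbf{L}^2}^2 = \int_{\mathcal Q} (w \cdot \nabla) w \cdot u_2\, dx,
\]
whose right-hand side I bound by the integral H\"older inequality \eqref{eq.Hoelder} with the exponents $2$, $p$, $\mathfrak{r}$, where $1/p = 1/2 - 1/\mathfrak{r}$, as $\| \nabla w \|_{\mathbf{L}^2} \| w \|_{\mathbf{L}^p} \| u_2 \|_{\mathbf{L}^{\mathfrak{r}}}$. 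The core of the argument is then the interpolation step. By the Gagliardo-Nirenberg inequality \eqref{eq.L-G-N} applied with $j_0 = 0$, $k_0 = 1$, $p_0 = p$ and $q_0 = r_0 = s_0 = 2$, the admissible exponent is $a = 3 (1/2 - 1/p) = 3/\mathfrak{r} \in (0,1)$, giving $\| w \|_{\mathbf{L}^p} \leq c_1 \| \nabla w \|_{\mathbf{L}^2}^{3/\mathfrak{r}} \| w \|_{\mathbf{L}^2}^{1 - 3/\mathfrak{r}} + c_2 \| w \|_{\mathbf{L}^2}$. Substituting and using the defining relation $2/\mathfrak{s} + 3/\mathfrak{r} = 1$, the power of $\| \nabla w \|_{\mathbf{L}^2}$ from the leading term equals $1 + 3/\mathfrak{r} = 2 - 2/\mathfrak{s} < 2$, so a single application of Young's inequality \eqref{eq.Young} with exponents $\mathfrak{s}/(\mathfrak{s}-1)$ and $\mathfrak{s}$ absorbs a multiple of $\| \nabla w \|_{\mathbf{L}^2}^2$ into the left-hand side; the companion factor yields $\| w \|_{\mathbf{L}^2}^{(1-3/\mathfrak{r}) \mathfrak{s}} = \| w \|_{\mathbf{L}^2}^2$, the exponent collapsing to $2$ precisely because $(1-3/\mathfrak{r}) \mathfrak{s} = 2$. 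The additive lower-order term $c_2 \| w \|_{\mathbf{L}^2}$ is treated the same way and is harmless.

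In this manner I reach the differential inequality
\[
   \frac{d}{dt} \| w \|_{\mathbf{L}^2}^2 \leq C\, \| u_2 (\cdot,t) \|_{\mathbf{L}^{\mathfrak{r}}}^{\mathfrak{s}}\, \| w \|_{\mathbf{L}^2}^2 .
\]
Because $u_2 \in L^{\mathfrak{s}} (I,\mathbf{L}^{\mathfrak{r}})$, the coefficient $t \mapsto \| u_2 (\cdot,t) \|_{\mathbf{L}^{\mathfrak{r}}}^{\mathfrak{s}}$ lies in $L^1 (I)$, whence the Gronwall lemma (Lemma \ref{l.Perov} with $\gamma = 1$ and $A = 0$), combined with $\| w (\cdot,0) \|_{\mathbf{L}^2} = 0$, forces $w \equiv 0$, that is $u_1 = u_2$.

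I expect the delicate point to be the rigorous justification of the choice $v = w$ and of the energy \emph{identity} (rather than a mere inequality) used above. A priori one only has $u_i \in L^\infty (I,V_0) \cap L^2 (I,V_1)$ with $\partial_t u_i \in L^1 (I,V'_1)$, so that the chain rule $\frac{d}{dt} \| w \|_{\mathbf{L}^2}^2 = 2 \langle \partial_t w, w \rangle$ and the admissibility of $w$ as a test field must be established by a standard mollification in time, cf.\ \cite[Ch.~III]{Tema79}; this is exactly where the extra hypothesis $u_i \in L^{\mathfrak{s}} (I,\mathbf{L}^{\mathfrak{r}})$ is indispensable, guaranteeing that the trilinear term is integrable over $I$. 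Finally, the endpoint $\mathfrak{r} = \infty$, $\mathfrak{s} = 2$ should be handled separately: there $p = 2$ and $a = 0$, the H\"older step gives directly $\| \nabla w \|_{\mathbf{L}^2} \| w \|_{\mathbf{L}^2} \| u_2 \|_{\mathbf{L}^\infty}$, and Young's inequality again reduces matters to Gronwall with coefficient $\| u_2 (\cdot,t) \|_{\mathbf{L}^\infty}^2 \in L^1 (I)$.
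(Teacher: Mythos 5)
Your proposal is correct and is precisely the classical Prodi--Serrin uniqueness argument (energy identity for the difference, H\"older with exponents $2,p,\mathfrak{r}$, Gagliardo--Nirenberg interpolation, Young's inequality to absorb $\|\nabla w\|_{\mathbf{L}^2}^2$, then Gronwall), which is exactly what the paper invokes: its proof of Theorem \ref{t.NS.unique} consists of citing Ladyzhenskaya, Lions and Temam for this very computation and pointing to the analogous linearised argument in Lemma \ref{l.NS.lin.unique}. Your exponent bookkeeping ($a=3/\mathfrak{r}$, $(1-a)\mathfrak{s}=2$), the separate treatment of the endpoint $\mathfrak{r}=\infty$, and the remark on justifying $v=w$ via Lemma \ref{l.Lions} all match the intended route, so there is nothing to add.
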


\begin{proof}
It is similar to the proof for the flows in domains of ${\mathbb R}^3$, see
   \cite[Ch.~6, \S~2, Theorem~1]{Lady70},
   \cite[Theorem 6.9]{Lion69} or Theorem 3.4 and Remark 3.6 in \cite{Tema79}
or Lemma \ref{l.NS.lin.unique} below.
\end{proof}

We now proceed with studying more regular solutions.

\section{An open mapping theorem}
\label{s.OM}

This section is devoted to the so-called stability property for solutions to the 
Navier-Stokes equations. One of the first statements of this kind was obtained  by 
O.A.Ladyzhenskaya \cite[Ch.~4, \S~4, Theorem~11]{Lady70} 
for flows in bounded domains in ${\mathbb R}^3$ with $C^2$ smooth boundaries.

In order to extend the property to the spaces of high smoothness, we consider the standard linearisation of problem \eqref{eq.NS} at the zero solution $(0,0)$.
Namely, given sufficiently regular spatially periodic functions
   $f = (f^1, f^2, f^3)$,
   $w = (w^1, w^2 , w^3)$
on ${\mathbb R}^3 \times [0,T]$ and
   $u_0 = (u^1_{0}, u^2_{0}, u^3_{0})$
on ${\mathbb R}^3$ with values in $\mathbb{R}^3$, find sufficiently regular spatially 
periodic functions
   $u = (u^1, u^2, u^3)$ and
   $p$
in the strip ${\mathbb R}^3 \times [0,T]$ which satisfy
\begin{equation}
\label{eq.NS.lin}
\left\{
\begin{array}{rcll}
   \partial _t u   -  \mu \varDelta u + (w \cdot \nabla) u  + (u \cdot \nabla) w + \nabla p
 & =
 & f,
 & (x,t) \in {\mathbb R}^3 \times (0,T),
\\
   \mbox{div}\, u
 & =
 & 0,
 & (x,t) \in {\mathbb R}^3 \times (0,T),
\\[.05cm]
   u
& =
& u_0,
& (x,t) \in \mathbb{R}^3 \times \{ 0 \}
\end{array}
\right.
\end{equation}

Considering this problem in the Bochner spaces yields the classical existence theorem for the 
weak solutions to \eqref{eq.NS.lin}.
To formulate it we set
$$
   \mathbf{B} (w,u)
 = \sum_{j=1}^3 w^j \partial_j u + \sum_{j=1}^3 u^j \partial_j w
$$
for vector fields
   $u = (u^1, u^2, u^3)$ and
   $w = (w^1, w^2, w^3)$.

\begin{theorem}
\label{t.exist.NS.lin.weak}
Suppose $w \in C (I,V_0) \cap L^2 (I,V_1) \cap L^{2} (I,\mathbf{L}^{\infty}   )$.
Given any pair $(f,u_0) \in L^2 (I,V'_1) \times V_0$, there is a unique vector field
   $u \in C (I,V_0) \cap L^2 (I,V_1)$ with
   $\partial_t u \in L^2 (I,V'_1)$,
satisfying
\begin{equation}
\label{eq.NS.lin.weak}
\left\{
   \begin{array}{rcl}
   \displaystyle
   \frac{d}{dt} (u,v)_{\mathbf{L}^2   }
 + \mu \sum_{|\alpha| = 1} (\partial^\alpha u, \partial^\alpha v)_{\mathbf{L}^2   }
 & =
 & \langle f - \mathbf{B} (w,u), v \rangle,
\\
   u (\cdot,0)
 & =
 & u_0
   \end{array}
\right.
\end{equation}
for all $v \in V_1$.
\end{theorem}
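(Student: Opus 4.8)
The plan is to construct the solution by the Faedo-Galerkin method, relying on the orthonormal basis $\{ v_{m,j} \}$ of $V_0$ provided by Lemma \ref{l.basis.V} and Proposition \ref{p.Vs}. Writing $u_N$ for the Galerkin approximant in the span $\mathcal{V}_N$ of the first $N$ of these eigenfields and testing \eqref{eq.NS.lin.weak} against each $v = v_{m,j} \in \mathcal{V}_N$, one reduces the problem to a linear system of ordinary differential equations for the Fourier coefficients $c^{m,j} (u_N) (t)$. Its matrix and right-hand side are built from the constant viscous symbols $\mu\, m (2\pi/\ell)^2$, from the couplings $\langle \mathbf{B} (w,v_{m',j'}), v_{m,j} \rangle$, and from $\langle f,v_{m,j} \rangle$; all of these are integrable in $t$, since they are dominated by $\| w \|_{\mathbf{L}^\infty}$ and by $\| f \|_{V'_1}$, which lie in $L^2 (I)$. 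By the Carath\'eodory theorem for linear systems with integrable coefficients, this ODE system has a unique absolutely continuous solution on all of $I$, the initial datum being imposed by setting $c^{m,j} (u_N) (0)$ equal to the Fourier coefficients of the orthogonal projection of $u_0$ onto $\mathcal{V}_N$.

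The crucial step, and the one I expect to be the main obstacle, is the derivation of $N$-uniform a priori bounds. Testing the approximate equation with $u_N$ itself and using that $w$ is divergence free, the contribution $\langle \sum_j w^j \partial_j u_N, u_N \rangle$ vanishes, because integration by parts on the torus turns it into $-\frac{1}{2} \int_{\mathcal Q} (\mathrm{div}\, w)\, |u_N|^2\, dx = 0$. The surviving contribution is rewritten, using $\mathrm{div}\, u_N = 0$, as
$$
   \langle \sum_j u_N^j \partial_j w, u_N \rangle
 = -\sum_{j,k} \int_{\mathcal Q} u_N^j\, (\partial_j u_N^k)\, w^k\, dx,
$$
and is therefore dominated by $c\, \| w \|_{\mathbf{L}^\infty} \| u_N \|_{\mathbf{L}^2} \| \nabla u_N \|_{\mathbf{L}^2}$; this is precisely the point at which the hypothesis $w \in L^2 (I,\mathbf{L}^\infty)$ enters. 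Estimating $\langle f, u_N \rangle \leq \| f \|_{V'_1} \| u_N \|_{\mathbf{H}^1}$ and invoking the Young inequality \eqref{eq.Young} to absorb the factors $\| \nabla u_N \|_{\mathbf{L}^2}$ into the viscous term $\mu \| \nabla u_N \|_{\mathbf{L}^2}^2$, one arrives at a differential inequality of the form
$$
   \frac{d}{dt} \| u_N \|_{\mathbf{L}^2}^2
 + \mu \| \nabla u_N \|_{\mathbf{L}^2}^2
 \leq
   C\, \| f \|_{V'_1}^2
 + \frac{C}{\mu}\, \big( 1 + \| w \|_{\mathbf{L}^\infty}^2 \big)\, \| u_N \|_{\mathbf{L}^2}^2 .
$$
Since $t \mapsto \| w (\cdot,t) \|_{\mathbf{L}^\infty}^2$ is integrable over $I$ by assumption, the Gronwall Lemma \ref{l.Perov} (in the case $\gamma = 1$) yields a bound on $\| u_N (\cdot,t) \|_{\mathbf{L}^2}^2$ uniform in $t \in I$ and in $N$, whereupon a further integration over $I$ bounds $\int_0^T \| \nabla u_N \|_{\mathbf{L}^2}^2\, dt$ as well. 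Thus $\{ u_N \}$ is bounded in $L^\infty (I,V_0) \cap L^2 (I,V_1)$.

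These bounds let one extract a subsequence converging weakly-$\ast$ in $L^\infty (I,V_0)$ and weakly in $L^2 (I,V_1)$ to a limit $u$. Because \eqref{eq.NS.lin.weak} is \emph{linear} in $u$, no compactness argument is needed to pass to the limit in $\langle \mathbf{B} (w,u_N), v \rangle$: for fixed $v \in V_1$, the maps $u \mapsto \langle \sum_j w^j \partial_j u, v \rangle$ and $u \mapsto \langle \sum_j u^j \partial_j w, v \rangle$ extend, after integration in time against a scalar test function, to continuous linear functionals on $L^2 (I,V_1)$, so $u$ satisfies the weak equation. The membership $\partial_t u \in L^2 (I,V'_1)$ follows by reading off $\partial_t u = \mathbf{P} (f - \mathbf{B} (w,u)) + \mu \varDelta u$ from the equation and checking, with the same integrations by parts as above, that $\| \mathbf{B} (w,u) \|_{V'_1} \leq c\, \| w \|_{\mathbf{L}^\infty} \| u \|_{\mathbf{L}^2}$ lies in $L^2 (I)$ as a product of an $L^2 (I)$ and an $L^\infty (I)$ function, while $\mu \varDelta u \in L^2 (I,V'_1)$ since $u \in L^2 (I,V_1)$. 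The combination $u \in L^2 (I,V_1)$ with $\partial_t u \in L^2 (I,V'_1)$ then gives $u \in C (I,V_0)$ by the standard Lions-type embedding, which also makes the initial condition $u (\cdot,0) = u_0$ meaningful and establishes it in the limit.

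Finally, uniqueness is immediate from linearity: the difference of two solutions solves \eqref{eq.NS.lin.weak} with $f = 0$ and $u_0 = 0$, and the a priori estimate of the second paragraph, now applied directly to the solution rather than to the approximants, forces $\| u (\cdot,t) \|_{\mathbf{L}^2} \equiv 0$. The only places demanding genuine care are the legitimacy of testing with $u$ itself, justified by the $C (I,V_0)$ regularity together with a density argument, and the verification that the integration by parts annihilating $\langle \sum_j w^j \partial_j u, u \rangle$ stays valid under the modest spatial regularity $w (\cdot,t) \in V_1$ of the drift.
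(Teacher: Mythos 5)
Your proof is correct and follows the same Faedo--Galerkin skeleton as the paper (eigenbasis of Proposition \ref{p.Vs}, linear ODE system for the coefficients, energy estimate via \eqref{eq.Bu.zero}, weak limits, $\partial_t u$ read off from the equation, continuity via Lemma \ref{l.Lions}, uniqueness by linearity plus Gronwall). Two steps genuinely diverge, both to your advantage in simplicity. First, in the energy estimate you absorb the forcing term completely via Young's inequality and close with the plain ($\gamma=1$) Gronwall inequality, whereas the paper deliberately keeps the term $2\|f\|_{V_1'}\|u\|_{\mathbf{L}^2}$ linear in $\|u\|_{\mathbf{L}^2}$ and invokes the generalized Perov--Gronwall Lemma \ref{l.Perov} with $\gamma=1/2$; the paper does this not out of necessity for existence but to obtain the specific quantitative bound \eqref{eq.En.Est2} in terms of $\|(f,u_0)\|_{0,\mu,T}$, which it reuses verbatim later (Lemmata \ref{p.En.Est.u} and \ref{p.En.Est.u.strong}). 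Second, and more substantively, your passage to the limit in the term $\langle\mathbf{B}(w,u_N),v\rangle$ uses only the linearity of the problem: after the integrations by parts of \eqref{eq.Bu}, the functional $u\mapsto\int_0^T\langle\mathbf{B}(w,u),v\rangle\phi\,dt$ is continuous on $L^2(I,\mathbf{L}^2)$ for fixed $v\in V_1$, so weak convergence suffices. The paper instead appeals to ``rather delicate arguments involving compact embedding theorems'' (Aubin--Lions type) to upgrade to strong convergence in $L^2(I,\mathbf{L}^2)$ -- a step inherited from the nonlinear theory that is not actually needed here. Your observation that compactness can be dispensed with for the linearised equations is a genuine and worthwhile simplification; the only caveat is that the boundedness of the functional requires $\mathrm{div}\,u_N=0$ (automatic for the Galerkin fields) and $\mathrm{div}\,w=0$, both of which you implicitly use and which hold in context.
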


\begin{proof}
It is similar to the proof of the uniqueness and existence theorem for the Stokes problem and
the Navier-Stokes problem, see
   \cite[\S~2.3, \S~2.4]{Tema95}
(or
   \cite[Ch.~II, Theorem 6.1 and Theorem 6.9]{Lion69} or
   \cite[Ch.~III, Theorem 1.1, Theorem 3.1 and Theorem 3.4]{Tema79}
for domains in ${\mathbb R}^3$).
We shortly recall the arguments in the part we will use in order to obtain existence theorems
related to \eqref{eq.NS.lin.weak} for more regular data and solutions.

We first note that, for $s \in {\mathbb Z}_+$, the system
$$
   \{ e_1, e_2, e_3, v_{m,j} \}_{m \in {\mathbb N}_{2,3} \atop j = 1, \ldots, J_m}
$$
is an orthogonal basis in $V_s$, see Proposition \ref{p.Vs}.
It is convenient to set
   $J_0 = 3$ and
   $v_{0,j} = e_j$
for $j = 1, 2, 3$.
Next, one defines the Faedo-Galerkin approximations in the usual way
$$
   u_M
 = \sum_{m \in \{ 0 \} \cup {\mathbb N}_{2,3} \atop 0 \leq m \leq M}
   \sum_{j=1}^{J_{m}} c^{m,j}_M (t) v_{m,j} (x)
$$
where the functions $c^{m,j}_M$ satisfy the relations
\begin{equation}
\label{eq.ODU.lin.0}
\begin{array}{rcl}
   \displaystyle
   (\partial_t u_M\!, v_{m,j})_{\mathbf{L}^2   }
   \!
 + \mu \sum_{i=1}^3 (\partial_i u_M\!, \partial_i v_{m,j})_{\mathbf{L}^2   }
   \!
 + (\mathbf{B} (w,u_M\!), v_{m,j})_{\mathbf{L}^2   }
 \!\!
 & \!\! = \!\!
 \!\!
 & \langle f, v_{m,j} \rangle ,
\\
   u_M (x,0)
 \!\!
 & \!\! = \!\!
 \!\!
 & u_{0,M} (x)
\end{array}
\end{equation}
for all
   $m \in \{ 0 \} \cup \{ 0 \} \cup \mathbb{N}_{2,3}$ with $0 \leq m \leq M$
and
   $0 \leq j \leq J_m$.
The initial datum $u_{0,M}$ is the orthogonal projection of $u_0$ into the linear span of the
system
$
   \{ v_{m,j} \},
$
where
   $m$ varies over the set $(\{ 0 \} \cup {\mathbb N}_{2,3}) \cap \{ 0, 1, \ldots, M \}$
and
   $j$ from $1$ to $J_m$.
In this way \eqref{eq.ODU.lin.0} reduces to an initial problem for the unknown coefficients
$c^{m,j}_M (t)$ on the interval $[0,T]$.
That is
\begin{equation}
\label{eq.ODU.lin.1}
\left\{
\begin{array}{rcl}
   \displaystyle
   \frac{d}{dt} c^{m,j}_M
 + \mu m \Big( \frac{2 \pi}{\ell} \Big)^2 c^{m,j}_M
 + \sum_{m' \in \{ 0 \} \cup {\mathbb N}_{2,3} \atop 0 \leq m' \leq M}
   \sum_{j'=1}^{J_{m'}}
   w^{m,j}_{m',j'} c^{m',j'}_M
 & =
 & f^{m,j},
\\[-.2cm]
   c^{m,j}_M (0)
 & =
 & u_0^{m,j},
\end{array}
\right.
\end{equation}
where the real-valued function $f^{m,j} (t) = \langle f (\cdot,t), v_{m,j} \rangle$ belong 
to the class $L^2 [0,T]$, $u_{0}^{m,j} = (u_{0}, v_{m,j})_{\mathbf{L}^2   }$ are real 
numbers, and
\begin{equation}
\label{eq.C}
\begin{array}{rcl}
   w^{m,j}_{m',j'} (t)
 & =
 & (w (\cdot,t) \cdot \nabla v_{m',j'}, v_{m,j})_{\mathbf{L}^2   }
 + (v_{m',j'} \cdot \nabla w (\cdot,t), v_{m,j})_{\mathbf{L}^2   }
 \\
 & =
 & (w (\cdot,t) \cdot \nabla v_{m',j'}, v_{m,j})_{\mathbf{L}^2   }
 - (w (\cdot,t), v_{m',j'} \cdot \nabla v_{m,j})_{\mathbf{L}^2   }
 \end{array}
\end{equation}
belong to $L^\infty [0,T]$.

Denote by
   $c_M$
and
   $f_M$,
   $u_{0,M}$
the vectors constructed from the components
   $c^{m,j}_M$
and
   $f^{m,j}$,
   $u_0^{m,j}$,
where
   $m \in \{ 0 \} \cup \mathbb{N}_{2,3}$ does not exceed $M$ and
   $j = 1, \ldots, J_m$,
respectively, ordered in the lexicographic order.
Let moreover $A_M (t)$ stand for the corresponding matrix of real-valued functions on $[0,T]$
constructed from the components
\begin{equation}
\label{eq.CC}
   a^{m,j}_{m',j'} (t)
 = \mu m \Big( \frac{2 \pi}{\ell} \Big)^2 \delta^{m,j}_{m',j'}
 + w^{m,j}_{m',j'} (t),
\end{equation}
where $\delta^{m',j}_{m',j'}$ is the Kronecker symbol.
Then \eqref{eq.ODU.lin.1} transforms to
$$
\left\{
\begin{array}{rcl}
   \displaystyle
   \frac{d}{dt}\, c_M (t) + A_M (t) c_M (t)
 & =
 & f_M (t),
\\
   c_M (0)
 & =
 & u_{0,M},
\end{array}
\right.
$$
and hence for each $M \in \mathbb N$ the system \eqref{eq.ODU.lin.0} admits a unique solution
$c_M$ on the interval $[0,T]$ given by
$$
   c_M (t)
 = \exp \Big( - \int_0^t A_M (s) ds \Big) u_{0,M}
 + \int_0^t \exp \Big( \int_0^s A_M (s') d s' \Big) f_M (s) ds
$$
where
$$
   \exp \Big( - \int_0^t A_M (s) ds \Big)
 = \sum _{k=0}^\infty \frac{1}{k!} \Big( - \int_0^t A_M (s) ds \Big)^k.
$$
Formula \eqref{eq.C} shows that the components
   $a^{m,j}_{m',j'} (t)$,
and so the entries of the matrix
$$
   \displaystyle \exp \Big( - \int_0^t A_M (s) ds \Big),
$$
belong actually to $C^{0,1} [0,T]$.
Hence, as $f^{m,j} \in L^2 [0,T]$,
   the components of the vector $c_M$ are of class $C^{1/2} [0,T]$
and
   the components of the vector $(d/dt)\, c_M$ belong to $L^2 [0,T]$.
In particular,
   $u_M \in L^2 (I,V_s)\cap C (I,V_s)$
for each $s \in {\mathbb Z}_+$, and
   $(d/dt) u_M \in L^2 (I,V'_1)$.

In order to obtain a solution to \eqref{eq.NS.lin} one usually appeals to a priori estimates.
To obtain them, we invoke the following useful lemma by J.-L. Lions.

\begin{lemma}
\label{l.Lions}
Let $V$, $H$ and $V'$ be Hilbert spaces such that $V'$ is the dual to $V$ and the embeddings
$
   V \subset H \subset V'
$
are continuous and everywhere dense.
If
   $u \in L^2 (I,V_1)$ and
   $\partial_t u \in L^2 (I,V'_1)$
then
\begin{equation}
\label{eq.dt}
   \frac{d}{dt} \| u (\cdot, t) \|^2_{H}
 = 2\, \langle \partial_t u, u \rangle
\end{equation}
and $u$ is equal almost everywhere to a continuous mapping from $[0,T]$ to $H$.
\end{lemma}

\begin{proof}
See \cite[Ch.~III, \S~1, Lemma~1.2]{Tema79}.
\end{proof}

Next we note that if spatially periodic vector fields $u$, $v$ and $w$ are sufficiently regular
and $w$ satisfies $\mathrm{div}\, w = 0$ in ${\mathbb R}^3$ then
\begin{equation}
\label{eq.Bu}
   (w \cdot \nabla u, v)_{\mathbf{L}^2   }
 = \sum_{i, j=1}^3 ( w_j \partial_j u_i, v_i)_{L^2   }
 = - \sum_{i, j=1}^3 (u_i, w_j \partial_j  v_i )_{L^2   }
\end{equation}
(see for instance \cite[Ch.3, \S~3.1, formula~(3.2)]{Tema79}).
In particular, we deduce from \eqref{eq.Bu} that
\begin{equation}
\label{eq.Bu.zero}
\begin{array}{rcl}
   (w \cdot \nabla u, u)_{\mathbf{L}^2   }
 & =
 & 0,
\\
   (u \cdot \nabla w, u)_{\mathbf{L}^2   }
 & =
 & - (w, u \cdot \nabla u)_{\mathbf{L}^2   }
\end{array}
\end{equation}
for all sufficiently regular divergence free vector fields $u$ and $w$.
Thus, on multiplying the equation corresponding to indices $m$ and $j$ in \eqref{eq.ODU.lin.0}
by $c^{m,j}_M$, and summing up with respect to $m$ and $j$ we get
\begin{equation}
\label{eq.um.bound}
   \frac{1}{2} \frac{d}{dt}\, \| u_M (\cdot,t) \|^2_{\mathbf{L}^2   }
 + \mu\, \| \nabla u_M \|^2_{\mathbf{L}^2   }
 =  \langle f, u_M \rangle
 + (w, u_M \cdot \nabla u_M)_{\mathbf{L}^2   }
\end{equation}
for all $t \in [0,T]$ because of \eqref{eq.dt} and \eqref{eq.Bu.zero}.

The following standard statement, where
\begin{eqnarray*}
   \| u \|_{k,\mu,T}
 & =
 & \Big( \| \nabla^k u \|^2_{C (I, \mathbf{L}^2   )}
       + \mu\, \| \nabla^{k+1} u \|^2_{L^2 (I, \mathbf{L}^2   )}
   \Big)^{1/2},
\\
   \| (f,u_0) \|_{0,\mu,T}
 & =
 & \Big( \| u_0 \|^2_{\mathbf{L}^2   }
       + \frac{2}{\mu}\, \| f \|^2_{L^2 (I, V_1')}
       + \| f \|^2_{L^1 (I, V_1')}
   \Big)^{1/2},
\end{eqnarray*}
gives a basic a priori estimate for  regular solutions to \eqref{eq.NS.lin.weak} 
and \eqref{eq.NS.weak}.

\begin{lemma}
\label{p.En.Est.u}
Let
   $w \in L^2 (I,V_1) \cap C (I,V_0) \cap L^{2} (I, \mathbf{L}^{\infty}   )$.
If
   $u \in C (I,V_0) \cap L^2 (I,V_1)$ and
   $(f,u_0) \in L^2 (I,V_1') \times V_0$
satisfy
\begin{equation}
\label{eq.En.equal}
\left\{
\begin{array}{rcl}
   \displaystyle
   \frac{1}{2} \frac{d}{d\tau}\, \| u (\cdot,\tau) \|^2_{\mathbf{L}^2   }
 + \mu\, \| \nabla u \|^2_{\mathbf{L}^2   }
 & =
 & \langle f, u \rangle
 + (w, u \cdot \nabla u)_{\mathbf{L}^2   },
\\
   u (\cdot,0)
 & =
 & u_0
\end{array}
\right.
\end{equation}
for all $t \in [0,T]$, then
\begin{equation}
\label{eq.En.Est2}
\begin{array}{rcl}
   \displaystyle
   \| u \|^2_{0,\mu,T}
 & \leq &
   \displaystyle
   \| (f,u_0) \|^2_{0,\mu,T}
   \Big(
     1
   + 2 \sqrt{2}
     \exp \Big( \frac{1}{\mu} \int_0^T \| w (\cdot,t) \|^{2}_{\mathbf{L}^{\infty}   } dt \Big)
\\
 & + &
   \displaystyle
   \frac{4}{\mu}
   \Big( \int_0^T \| w (\cdot,t) \|^{2}_{\mathbf{L}^{\infty}   } dt \Big)
   \exp \Big( \frac{2}{\mu} \int_0^T \| w (\cdot,t)\|^{2}_{\mathbf{L}^{\infty}   } dt \Big)
 \Big) .
\end{array}
\end{equation}
\end{lemma}

It is easy to see that
\begin{equation}
\label{eq.En.Est2add}
   \| u \|_{L^{p} (I, \mathbf{L}^2   )}
 \leq T^{1/p}\, \| u \|_{L^\infty (I,\mathbf{L}^2   )}
\end{equation}
holds for any $p \geq 1$, which accomplishes the energy estimate \eqref{eq.En.Est2}.

\begin{proof}
It is similar to the proof of energy estimates for solutions to the Navier-Stokes equations, see
   \cite[Ch.~IV, \S~3]{Lady70} or
   \cite[Ch.~III, Theorem~3.1]{Tema79} or
   \cite[Remark~3.4]{Tema95}
for the periodic case.

The H\"older inequality and \eqref{eq.Young} imply
\begin{eqnarray}
\label{eq.w.infty}
   2 \left| \int_0^t (w, u \cdot \nabla u)_{\mathbf{L}^2   } ds \right|
 & \leq &
   2 \int_0^t
   \| \nabla u \|_{\mathbf{L}^2   }
   \| w \|_{\mathbf{L}^\infty   }
   \| u \|_{\mathbf{L}^{2}   }
   ds
\nonumber
\\
 & \leq &
   \frac{\mu}{2} \int_0^t \| \nabla u \|^2_{\mathbf{L}^2   } ds
 + \frac{2}{\mu} \int_0^t
   \| w \|^2_{\mathbf{L}^\infty   }
   \| u \|^2_{\mathbf{L}^{2}   }
   ds.
\nonumber
\\
\end{eqnarray}
On the other hand, by \eqref{eq.v.dual}, we get
\begin{eqnarray}
\label{eq.En.Est10}
\lefteqn{
   2\, \left| \int_0^t \langle f (\cdot,s), u (\cdot,s) \rangle ds \right|
}
\nonumber
\\
 & \leq &
   2 \int_0^t
   \| f (\cdot,s) \|_{V_1'}
   \Big( \| \nabla u (\cdot,s) \|^2_{\mathbf{L}^{2}   }
       + \| u (\cdot,s) \|^2_{\mathbf{L}^{2}   }
   \Big)^{1/2}
   ds
\nonumber
\\
 & \leq &
   2 \int_0^t
   \| f (\cdot,s) \|_{V_1'}
   \Big( \| \nabla u (\cdot,s) \|_{\mathbf{L}^{2}   }
       + \| u (\cdot,s) \|_{\mathbf{L}^{2}   }
   \Big)
   ds
\nonumber
\\
 & \leq &
   \int_0^t
   \Big( \frac{2}{\mu} \| f (\cdot,s) \|^2_{V_1'}
       + \frac{\mu}{2} \| \nabla u (\cdot,s) \|^2_{\mathbf{L}^{2}   }
       + 2 \| f (\cdot,s) \|_{V_1'} \| u (\cdot,s) \|_{\mathbf{L}^{2}   }
   \Big)
   ds
\nonumber
\\
\end{eqnarray}
for all $t \in [0,T]$.
Integrating \eqref{eq.En.equal} with respect to 
$\tau$ over $[0,t]$ and taking both \eqref{eq.w.infty}
and \eqref{eq.En.Est10} into account yields
\begin{eqnarray}
\label{eq.En.Est1}
\lefteqn{
   \| u (\cdot, t) \|^2_{\mathbf{L}^2   }
 + \mu \int_0^t \| \nabla u (\cdot,s) \|^2_{\mathbf{L}^2   } ds
 \, \leq \,
   \| u_0 \|^2_{\mathbf{L}^2   }
}
\nonumber
\\
 & + &
   \int_0^t
   \Big( \frac{2}{\mu} \| f (\cdot,s) \|^2_{V_1'}
       + 2 \| f (\cdot,s)\|_{V_1'} \| u (\cdot,s) \|_{\mathbf{L}^{2}   }
       + \frac{2}{\mu} \| w (\cdot,s) \|^{2}_{\mathbf{L}^{\infty}   }
                       \| u (\cdot,s) \|^2_{\mathbf{L}^{2}   }
   \Big)
   ds.
\nonumber
\\
\end{eqnarray}

Finally, on applying Lemma \ref{l.Perov} with
   $\gamma=1/2$ and
   $Y (t) = \| u (\cdot,t) \|^2_{\mathbf{L}^{2}   }$
we readily obtain
\begin{eqnarray*}
   \| u (\cdot,t) \|^2_{\mathbf{L}^{2}   }
 & \leq &
   \Big(
   \Big( \| u_0 \|^2_{\mathbf{L}^2   } + \frac{2}{\mu} \| f \|^2_{L^2 ([0,t],V_1')}
   \Big)^{1/2}
   \exp \Big( \frac{1}{\mu} \int_0^t \| w (\cdot,s) \|^{2}_{\mathbf{L}^{\infty}   } ds \Big)
\\
 & + &
   \int_0^t
   \| f (\cdot,s) \|_{V_1'}
   \exp
   \Big( \frac{1}{\mu} \int_{s}^t \| w (\cdot,s') \|^{2}_{\mathbf{L}^{\infty}   } d s'
   \Big)
   ds
   \Big)^{2}
\\
 & \leq &
   2\, \| (f,u_0) \|^2_{0,\mu,T}\,
   \exp \Big( \frac{2}{\mu} \int_0^T  \| w (\cdot,s) \|^{2}_{\mathbf{L}^{\infty}   } ds \Big)
\end{eqnarray*}
for all $t \in [0,T]$.
Estimate \eqref{eq.En.Est2} follows from the latter inequality.
\end{proof}

Lemma \ref{p.En.Est.u} and \eqref{eq.um.bound} imply that the sequence $\{ u_M \}$ is bounded
in the space $C (I,V_0) \cap L^2 (I,V_1)$.
So, it bounded in $L^\infty (I,V_0) \cap L^2 (I,V_1)$ and we can extract a subsequence that
   converges weakly-$^\ast$ in $L^\infty (I,V_0)$ and
   converges weakly in $L^2 (I,V_1)$
to an element $u \in L^\infty (I,V_0) \cap L^2 (I,V_1)$.
For abuse of notation, we use the same designation $\{ u_M \}$ for such a subsequence.

At this point, rather delicate arguments involving compact embedding theorems for the 
Bochner-Sobolev spaces on bounded domains show that the sequence $\{ u_M \}$ may be 
considered as convergent in the space $L^2 (I,\mathbf{L}^2   )$, see
   \cite[Ch.~II, Theorem~6.1]{Lion69} 
or \cite[Ch.~III, Theorem~3.1]{Tema79}.
This allows us to pass to the limit \hfill with respect to 
$M \to \infty$ in \eqref{eq.ODU.lin.0} and
to conclude that the element $u$ satisfies \eqref{eq.NS.lin.weak}.

We proceed with the uniqueness.

\begin{lemma}
\label{l.NS.lin.unique}
Let
   $w \in  L^2 (I,V_1) \cap L^\infty (I,V_0) \cap L^{2} (I,\mathbf{L}^{\infty}   )$.
For each pair
  $(f,u_0) \in L^2 (I,V'_1) \times V_0$
the linearised Navier-Stokes equations \eqref{eq.NS.lin.weak} have at most one solution in the
space
  $L^2 (I,V_1) \cap L^\infty (I,V_0)$.
\end{lemma}

\begin{proof}
First, we note that \eqref{eq.Bu} implies
$$
   (\mathbf{B} (w,u), v)_{\mathbf{L}^2   }
 = - (w, u \cdot \nabla v)_{\mathbf{L}^2   }
   - (u, w \cdot \nabla v)_{\mathbf{L}^2   }
$$
for all $u, v \in  L^2 (I,V_1) \cap L^\infty (I,V_0)$.
Hence, if
 $w \in  L^2 (I,V_1) \cap L^\infty (I,V_0) \cap L^{2} (I,\mathbf{L}^{\infty}    )$
then the H\"older inequality yields
$$
   |(\mathbf{B} (w,u), v)_{\mathbf{L}^2   }|
 \leq
   2\,
   \| w \|_{\mathbf{L}^{\infty}   }
   \| u \|_{\mathbf{L}^{2}   }
   \| \nabla v \|_{\mathbf{L}^2   }
$$
for all $v \in V_1$.
On applying the H\"older inequality once again we readily conclude that
$$
   \|(\mathbf{B} (w,u) \|^2_{L^2 (V'_1)}
 \leq
   \int_0^T \| w \|^2_{\mathbf{L}^{\infty}   }
            \| u \|^2_{\mathbf{L}^{2}   }\,
   dt
 \leq
   \| w \|^{2}_{L^{2} (I,\mathbf{L}^{\infty}    )}
   \| u \|^{2}_{L^{\infty} (I,\mathbf{L}^2   )},
$$
i.e.,
   $\mathbf{B} (w, u) \in L^2 (I,V'_1)$ and
   $\partial_t u \in L^2 (I,V'_1)$,
if $u$ is a solution to problem \eqref{eq.NS.lin.weak}.

Let now $u'$ and $u''$ be any two solutions to \eqref{eq.NS.lin.weak} from the declared function
space.
Then the difference $u = u' - u''$ is a solution to \eqref{eq.NS.lin.weak} with zero data
   $(f,u_0) = (0,0)$.
Hence it follows that
$$
   \langle \partial_t u, u \rangle +  \| \nabla u \|^2_{\mathbf{L}^2   }
 = \langle \mathbf{B} (w,u), u \rangle.
$$
Next, as
   $u \in L^2 (I,V_1)$ and
   $\partial_t u \in L^2 (I,V'_1)$,
integrating the above equality with respect to $t$ and using  Lemma \ref{l.Lions}, we get
$$
   \| u (\cdot,t) \|^2_{\mathbf{L}^2   }
 + 2 \mu  \int_0^t \| \nabla u (\cdot,s) \|^2_{\mathbf{L}^2   } ds
 = 2 \int_0^t \langle \mathbf{B} (w,u), u \rangle\, ds
$$
because
   $\mathbf{B} (w,u) \in L^2 (I,V'_1)$
(and $u \in L^2 (I,V_1)$).
As the vector field $w$ is assumed to belong to
   $L^2 (I,V_1) \cap L^\infty (I,V_0) \cap L^{2} (I,\mathbf{L}^{\infty}   )$,
using \eqref{eq.Bu.zero} and \eqref{eq.En.Est1} gives
\begin{equation}
\label{eq.unique.est}
   \| u (\cdot,t) \|^2_{\mathbf{L}^2   }
\leq
  \frac{2}{\mu}
  \int_0^t \| w (\cdot,s) \|^{2}_{\mathbf{L}^{\infty}   }
           \| u (\cdot,s) \|^2_{\mathbf{L}^2    }\, ds.
\end{equation}
Applying Gronwall's Lemma \ref{l.Perov} to this inequality yields
$$
   0
 \leq
   \| u (\cdot,t) \|^2_{\mathbf{L}^2}
 \leq
   0
$$
for all $t \in [0,T]$, and so $u \equiv 0$, as desired.
\end{proof}

Finally, the vector field $u$ belongs to $C (I,V_0)$, for
   $\partial_t u \in L^2 (I,V'_1)$,
   $u \in L^2 (I,V_1)$
and the embeddings $V_1 \subset V_0 \subset V_1'$ are continuous and everywhere dense, i.e.,
the assumptions of Lemma \ref{l.Lions} are fulfilled.
\end{proof}

Of course, Theorem \ref{t.exist.NS.lin.weak} can be easily extended to the case where
   $w \in L^\mathfrak{s} (I, \mathbf{L}^\mathfrak{r}   )$
with indices
   $\mathfrak{s}$ and
   $\mathfrak{r}$
satisfying \eqref{eq.s.r}, see for instance
   \cite{Lady67},
   \cite[Ch.~3, \S~1-\S~4]{LadSoUr67}
and elsewhere.

We are now in a position to introduce appropriate function spaces for solutions and for the 
data in order to obtain existence theorems for regular solutions to the Navier-Stokes 
equations. More precisely, as the principal differential part of the Navier-Stokes equations 
is parabolic, we prefer to follow the dilation principle 
when  introducing function spaces for 
the unknown velocity and given exterior forces. Namely, for $s, k \in {\mathbb Z}_+$, we 
denote by   $B^{k,2s,s}_\mathrm{vel} (I)$
the set of all vector fields $u$ in
   $C (I,V_{k+2s}) \cap L^2 (I,V_{k+1+2s})$
such that
$$
   \partial_x^\alpha \partial_t^j u
 \in
   C (I,V_{k+2s-|\alpha|-2j}) \cap L^2 (I,V_{k+1+2s-|\alpha|-2j})
$$
provided $|\alpha|+2j \leq 2s$.
We endow each space $B^{k,2s,s}_\mathrm{vel} (I)$ with the natural norm
$$
   \| u \|_{B^{k,2s,s}_\mathrm{vel} (I)}
 :=
   \Big( \sum_{i=0}^k \sum_{|\alpha|+2j \leq 2s}
         \| \partial_x^\alpha \partial_t^j u \|^2_{i,\mu,T}
   \Big)^{1/2}
$$
where
$
   \displaystyle
   \| u \|_{i,\mu,T}
 = \Big( \| \nabla^i u \|^2_{C (I,\mathbf{L}^2   )}
       + \mu \| \nabla^{i+1} u \|^2_{L^2 (I,\mathbf{L}^2   )}
   \Big)^{1/2}
$
are seminorms on the space $B^{k,2s,s}_\mathrm{vel} (I)$, too. 

Similarly, for $s, k \in {\mathbb Z}_+$, we define the space $B^{k,2s,s}_\mathrm{for} (I)$
to consist of all forces $f$ in
$
   C (I,\mathbf{H}^{2s+k}   ) \cap L^2 (I,\mathbf{H}
	^{2s+k+1}   )
$
with the property that
$
   \partial_x^\alpha \partial _t^j f
 \in
   C (I,\mathbf{H}^{k}   ) \cap L^2 (I,\mathbf{H}^{k+1}   )
$
provided
$
   |\alpha|+2j \leq 2s.
$
If $f \in B^{k,2s,s}_\mathrm{for} (I)$, then actually
$$
   \partial_x^\alpha \partial _t^j f
 \in
   C (I,\mathbf{H}^{k+2(s-j)-|\alpha|}   )
 \cap
   L^2 (I,\mathbf{H}^{k+1+2(s-j)-|\alpha|}   )
$$
for all $\alpha$ and $j$ satisfying $|\alpha|+2j \leq 2s$.
We endow each space $B^{k,2s,s}_\mathrm{for} (I)$ with the natural norm
$$
   \| f \|_{B^{k,2s,s}_\mathrm{for} (I)}
 :=
   \Big( \sum_{i=0}^k \sum_{|\alpha|+2j \leq 2s}
         \| \nabla^i \partial_x^\alpha \partial_t^j f \|^2_{C (I,\mathbf{L}^2({\mathcal Q}))}
       + \| \nabla^{i+1} \partial_x^\alpha \partial_t^j f \|^2_{L^2 (I,\mathbf{L}^2 
			({\mathcal Q}))}  \Big)^{1/2}.
$$
Finally, the target space for the pressure $p$ is $B^{k+1,2s,s}_\mathrm{pre} (I)$.
By definition, it consists of all functions $p \in C (I,\dot{H}^{2s+k+1}   ) \cap L^2 
(I,\dot{H}^{2s+k+2}   )$ such that $\nabla p \in B^{k,2s,s}_\mathrm{for} (I)$. 
Obviously, the space does not contain functions depending on $t$ only, and this allows us to 
equip it with the norm
$$
   \| p \|_{B^{k+1,2s,s}_\mathrm{pre} (I)}
 = \| \nabla p \|_{B^{k,2s,s}_\mathrm{for} (I)}.
$$

It is easy to see that
   $B^{k,2s,s}_\mathrm{vel} (I)$,
   $B^{k,2s,s}_\mathrm{for} (I)$ and
   $B^{k+1,2s,s}_\mathrm{pre} (I)$
are Banach spaces.
We proceed with a simple lemma.

\begin{lemma}
\label{l.NS.cont}
Suppose that
   $s \in \mathbb N$,
   $k \in {\mathbb Z}_+$ and
   $w \in B^{k,2s,s}_\mathrm{vel} (I)$.
The following mappings are continuous:
$$
\begin{array}{rrcl}
   \nabla :
 & B^{k+1,2(s-1),s-1}_\mathrm{pre} (I)
 & \to
 & B^{k,2(s-1),s-1}_\mathrm{for} (I),
\\[.2cm]
   \varDelta :
 & B^{k,2s,s}_\mathrm{vel} (I)
 & \to
 & B^{k,2(s-1),s-1}_\mathrm{for} (I),
\\
   \varDelta :
 & B^{k+2,2(s-1),s-1}_\mathrm{vel} (I)
 & \to
 & B^{k,2(s-1),s-1}_\mathrm{for} (I),
\\[.2cm]
   \partial_t :
 & B^{k,2s,s}_\mathrm{vel} (I)
 & \to
 & B^{k,2(s-1),s-1}_\mathrm{for} (I),
\\
   \delta_t :
 & B^{k,2s,s}_\mathrm{vel} (I)
 & \to
 & V_{k+2s},
\\[.2cm]
   \mathbf{B} (w,\cdot) :
 & B^{k+2,2(s-1),s-1}_\mathrm{vel} (I)
 & \to
 & B^{k,2(s-1),s-1}_\mathrm{for} (I),
\\
  \mathbf{B} (w,\cdot) :
 & B^{k,2s,s}_\mathrm{vel} (I)
 & \to
 & B^{k,2(s-1),s-1}_\mathrm{for} (I),
\\[.2cm]
   \mathbf{D} :
 & B^{k+2,2(s-1),s-1}_\mathrm{vel} (I)
 & \to
 & B^{k,2(s-1),s-1}_\mathrm{for} (I),
\\
   \mathbf{D} :
 & B^{k,2s,s}_\mathrm{vel} (I)
 & \to
 & B^{k,2(s-1),s-1}_\mathrm{for} (I).
\end{array}
$$
\end{lemma}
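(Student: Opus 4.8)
The plan is to treat the linear operators and the quadratic (bilinear) operators separately, since the former are essentially built into the definitions of the scales while the latter require genuine product estimates.

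First I would dispose of the linear maps $\nabla$, $\varDelta$, $\partial_t$ and $\delta_t$. Continuity of $\nabla$ is immediate: by the very definition of the pressure scale one has $\| p \|_{B^{k+1,2(s-1),s-1}_\mathrm{pre} (I)} = \| \nabla p \|_{B^{k,2(s-1),s-1}_\mathrm{for} (I)}$, so $\nabla$ is even an isometry onto its image. For $\varDelta$ I would use that it commutes with $\partial_x^\alpha \partial_t^j$ and, by \eqref{eq.by.parts.0} and Remark \ref{r.Delta.r}, lowers the spatial index by exactly $2$; thus $\partial_x^\alpha \partial_t^j \varDelta u = \varDelta\, \partial_x^\alpha \partial_t^j u$ carries each slot $C (I,V_{k+2s-|\alpha|-2j}) \cap L^2 (I,V_{k+1+2s-|\alpha|-2j})$ of the velocity scale into the corresponding slot of the force scale, the index count matching in both stated versions, and summing the squared seminorms $\| \cdot \|_{i,\mu,T}$ over $i \le k$ and $|\alpha|+2j \le 2(s-1)$ gives the norm bound. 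The operator $\partial_t$ is handled identically, the only change being the index shift $j \mapsto j+1$ (parabolic weight $2$), which reproduces precisely the defining conditions of the force scale. Finally $\delta_t$, the evaluation $u \mapsto u (\cdot,0)$, is bounded into $V_{k+2s}$ because $B^{k,2s,s}_\mathrm{vel} (I) \hookrightarrow C (I,V_{k+2s})$ and evaluation at a point is dominated by the $C (I,\cdot)\,$-norm.

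For the remaining maps I would first reduce $\mathbf{D} u = (u \cdot \nabla) u$ and $\mathbf{B} (w,u) = (w \cdot \nabla) u + (u \cdot \nabla) w$ to a single bilinear product estimate, since each is a finite sum of terms of the schematic form $a^{l}\, \partial_l b$. Fixing $i \le k$ and $\alpha, j$ with $|\alpha|+2j \le 2(s-1)$, I would apply the Leibniz rule to $\nabla^{i} \partial_x^\alpha \partial_t^j (a^l \partial_l b)$, obtaining a finite sum of products $(\partial_x^{\beta_1} \partial_t^{j_1} a^l)(\partial_x^{\beta_2} \partial_l \partial_t^{j_2} b)$ with $|\beta_1|+|\beta_2| \le i+|\alpha|$ and $j_1+j_2 = j$. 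Each such product I would estimate in $\mathbf{L}^2 ({\mathcal Q})$ by splitting it via the H\"older inequality \eqref{eq.Hoelder} into a product of an $\mathbf{L}^{p_1}$- and an $\mathbf{L}^{p_2}$-norm with $1/p_1 + 1/p_2 = 1/2$, and then bounding each factor by full Sobolev norms through the Gagliardo--Nirenberg inequality \eqref{eq.L-G-N}. Because the top spatial smoothness of $a$ and $b$ is $k+2s \ge 2 > 3/2$, Sobolev embedding always lets me place the factor carrying the fewer derivatives in $\mathbf{L}^\infty$ while the other retains the derivatives in $\mathbf{L}^2$, so every Leibniz term closes. The time integrability is then settled by a H\"older split in $t$: a factor taken from the $C (I,\mathbf{L}^2)$ part times a factor from the $L^2 (I,\mathbf{L}^2)$ part lands in $L^2 (I,\cdot)$, while two $C (I,\cdot)\,$-factors land in $C (I,\cdot)$, which are exactly the two slots defining $B^{k,2(s-1),s-1}_\mathrm{for} (I)$. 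Summing the finitely many Leibniz terms and then over $i,\alpha,j$ yields $\| \mathbf{D} u \|_{B^{k,2(s-1),s-1}_\mathrm{for} (I)} \le C\, \| u \|^2$ and $\| \mathbf{B} (w,u) \|_{B^{k,2(s-1),s-1}_\mathrm{for} (I)} \le C\, \| w \|\, \| u \|$ in the appropriate velocity norms; for the quadratic map $\mathbf{D}$ the bilinear bound yields local Lipschitz continuity by polarisation, and for $\mathbf{B} (w,\cdot)$ the field $w$ enters only through a fixed multiplicative constant, so continuity follows in both stated versions.

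The hard part will be the bookkeeping of the anisotropic, parabolic indices rather than any single inequality. I must check that in every Leibniz term the two factors carry enough \emph{combined} spatial smoothness for the split $1/p_1 + 1/p_2 = 1/2$ in \eqref{eq.Hoelder}--\eqref{eq.L-G-N} to be admissible, and simultaneously that the time-integrability exponents combine to fall into either the $C (I,\cdot)$ or the $L^2 (I,\cdot)$ slot, never demanding more than the single extra $L^2 (I,V_{k+1+2s})$ derivative that the velocity scale actually supplies. The borderline terms, in which almost all derivatives land on one factor, are precisely where this extra half-derivative of parabolic regularity and the strict inequality $k+2s > 3/2$ are needed to avoid an endpoint loss; this is the only step that is not essentially mechanical. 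I would also use \eqref{eq.Bu} and the divergence-free condition on $u$ and $w$ to rewrite the most dangerous terms in divergence form, moving a derivative off the worst factor before applying the product estimate.
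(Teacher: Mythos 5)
Your proposal is correct and follows essentially the same route as the paper: the linear maps are continuous by the very definition of the scales, the bilinear terms are handled by the Leibniz rule together with an $L^\infty\times L^2$ H\"older split in which the Sobolev embedding $H^{l+2}({\mathbb T}^3)\hookrightarrow W^{l,\infty}$ places the low-derivative factor in $L^\infty$, the $C(I,\cdot)$ versus $L^2(I,\cdot)$ time bookkeeping is exactly as you describe, and the quadratic map $\mathbf{D}$ is reduced to $\mathbf{B}$ by the polarisation identity \eqref{eq.M.diff}. The only difference is that the paper never needs the Gagliardo--Nirenberg inequality, the general H\"older exponents, or the divergence-form rewriting via \eqref{eq.Bu} that you hold in reserve for borderline terms --- the crude embedding suffices throughout because every slot of the scales carries at least two spatial derivatives.
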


As usual, we write $\delta_t (u (\cdot,t)) = u (\cdot,0)$ for the initial value functional
(or the delta-function in $t$).

\begin{proof}
Indeed, the linear operators in question are continuous by the very definition of the function
spaces.

By definition, the space $B^{2,0,0}_\mathrm{vel} (I) $ is continuously embedded into the spaces
   $C (I,\mathbf{H}^2   )$ and
   $L^2 (I,\mathbf{H}^3    )$.
Then, by the Sobolev embedding theorem (see for instance \cite[Ch.~4, Theorem~4.12]{Ad03}),
for any
   $k, s \in {\mathbb Z}_+$ and
   $\lambda\in (0,1)$
satisfying
\begin{equation}
\label{eq.Sob.index}
   k - s - \lambda > 3/2,
\end{equation}
there exists a constant $c (k,s,\lambda)$ depending on the parameters, such that
\begin{equation}
\label{eq.Sob.indexc}
   \| u \|_{C^{s,\lambda}    }
 \leq c (k,s,\lambda)\, \| u \|_{H^{k}   }
\end{equation}
for all $u \in H^{k}   $.
Hence, $B^{2,0,0}_\mathrm{vel} (I) $ is continuously embedded into
   $C (I,\mathbf{L}^{\infty}   )$ and
   $L^2 (I,\mathbf{W}^{1,\infty}   )$.
Then
\begin{eqnarray}
\label{eq.B.cont.1}
   \| \mathbf{B} (w,u) \|^2_{\mathbf{L}^2   }
 & \leq &
   \| w \|^2_{\mathbf{L}^\infty   } \| \nabla u \|^2_{\mathbf{L}^2   }
 + \| \nabla w \|^2_{\mathbf{L}^2   } \| u \|^2_{\mathbf{L}^\infty   }
\nonumber
\\
 & \leq &
   c
   \Big( \| w \|^2_{\mathbf{H}^2   } \| \nabla u \|^2_{\mathbf{L}^2    }
       + \| \nabla w \|^2_{\mathbf{L}^2    } \| u \|^2_{\mathbf{H}^2   }
   \Big),
\nonumber
\\
\end{eqnarray}
the constant $c$ being independent of $u$ and $w$, and so
$$
   \|\mathbf{B} (w,u) \|^2_{C (I,\mathbf{L}^2   )}
 \leq
   c
   \Big( \| w \|^2_{C (I,\mathbf{H}^2   )} \| \nabla u \|^2_{C (I,\mathbf{L}^2   )}
       + \| \nabla w \|^2_{C (I,\mathbf{L}^2   )} \| u \|^2_{C (I,\mathbf{H}^2   )}
   \Big).
$$
As
\begin{equation}
\label{eq.Leib}
   \partial_j \mathbf{B} (w,u)
 = \mathbf{B} (\partial_j w,u) + \mathbf{B} (w,\partial_j u),
\end{equation}
we also get
\begin{eqnarray}
\label{eq.B.cont.3}
\lefteqn{
   \| \nabla \mathbf{B} (w,u) \|^2_{\mathbf{L}^2   }
}
\nonumber
\\
 & \leq &
   c
   \Big( \| w \|^2_{\mathbf{L}^\infty   } \| \nabla^2 u \|^2_{\mathbf{L}^2   }
       + 2\, \| \nabla w \|^2_{\mathbf{L}^\infty   } \| \nabla u \|^2_{\mathbf{L}^2   }
       + \| \nabla^2 w \|^2_{\mathbf{L}^2   } \| u \|^2_{\mathbf{L}^\infty   }
   \Big)
\nonumber
\\
 & \leq &
   c
   \Big( \| w \|^2_{\mathbf{H}^2   } \| \nabla^2 u \|^2_{\mathbf{L}^2   }
       + 2\, \| w \|^2_{\mathbf{H}^3   } \| \nabla u \|^2_{\mathbf{L}^2   }
       + \| \nabla^2 w \|^2_{\mathbf{L}^2   } \| u \|^2_{\mathbf{H}^2   }
   \Big)
\nonumber
\\
\end{eqnarray}
with a constant $c$ independent of $u$ and $w$.
On combining \eqref{eq.B.cont.1} and \eqref{eq.B.cont.3} we deduce that
\begin{eqnarray*}
\lefteqn{
   \| \mathbf{B} (w,u) \|^2_{L^2 (I,\mathbf{H}^1   )}
 \, \leq \,
   c \Big(
   \| w \|^2_{C (I,\mathbf{H}^2   )} \| \nabla^2 u \|^2_{L^2 (I,\mathbf{L}^2   )}
}
\\
 & + &
   2\, \| w \|^2_{L^2 (I,\mathbf{H}^3   )} \| \nabla u \|^2_{C (I,\mathbf{L}^2   )}
 + \| \nabla^2 w \|^2_{L^2 (I,\mathbf{L}^2   )} \| u \|^2_{C (I,\mathbf{H}^2   )}
   \Big),
\end{eqnarray*}
i.e.,
   $\mathbf{B} (w,\cdot)$ maps $B^{2,0,0}_\mathrm{vel} (I)$ continuously into $B^{0,0,0}_\mathrm{for} (I)$.

Since the space $B^{k+2,0,0}_\mathrm{vel} (I) $ is continuously embedded both into
   $C (I,\mathbf{H}^{k+2}   )$ and
   $L^2 (I,\mathbf{H}^{k+3}   )$,
it is continuously embedded into
   $C (I,\mathbf{W}^{k,\infty}   )$ and
   $L^2 (I,\mathbf{W}^{k+1,\infty}   )$,
which is due to the Sobolev embedding theorem.
If $|\alpha| = k$ then, similarly to
   \eqref{eq.Leib} and
   \eqref{eq.B.cont.3},
we get
\begin{equation}
\label{eq.Leib.k}
   \partial^\alpha_x \mathbf{B} (w,u)
 = \sum_{\beta+\gamma = \alpha}
   c_{\beta,\gamma}\, \mathbf{B} (\partial^\beta_x w, \partial^\gamma_x u),
\end{equation}
and
\begin{eqnarray*}
\lefteqn{
   \| \nabla^{k'} \mathbf{B} (w,u) \|^2_{\mathbf{L}^2   }
}
\\
 & \leq &
   \sum_{l=0}^{k'}
   c_{k',l}
   \Big(
   \| \nabla^l w \|^2_{\mathbf{L}^\infty   } \| \nabla^{k'+1-l} u \|^2_{\mathbf{L}^2   }
 + \| \nabla^{k'+1-l} w \|^2_{\mathbf{L}^2   } \| \nabla^l u \|^2_{\mathbf{L}^\infty   }
   \Big)
\\
 & \leq &
   \sum_{l=0}^{k'}
   c_{k',l}
   \Big(
   \| w \|^2_{\mathbf{H}^{l+2}   } \| \nabla^{k'+1-l} u \|^2_{\mathbf{L}^2   }
 + \| \nabla^{k'+1-l} w \|^2_{\mathbf{L}^2   } \| u \|^2_{\mathbf{H}^{l+2}   }
   \Big)
\end{eqnarray*}
for all $0 \leq k' \leq k+1$,
   the coefficients $c_{\beta,\gamma}$ and $c_{k',l}$ being of binomial type.
Hence it follows that
\begin{equation}
\label{eq.B.cont.6}
\begin{array}{rcl}
 &
 &
   \| \nabla^{k'} \mathbf{B} (w,u) \|^2_{C (I,\mathbf{L}^2   )}
\\
 & \leq
 &
   c\,
   \Big(
   \| w \|^2_{C (I,\mathbf{H}^{k'+2}   )} \| u \|^2_{C (I,\mathbf{H}^{k'+1}   )}
 + \| w \|^2_{C (I,\mathbf{H}^{k'+1}   )} \| u \|^2_{C (I,\mathbf{H}^{k'+2}   )}
   \Big)
\end{array}
\end{equation}
and
\begin{equation}
\label{eq.B.cont.7}
\begin{array}{rcl}
 &
 &
   \| \nabla^{k'+1} \mathbf{B} ( w,u) \|^2_{L^2 (I,\mathbf{L}^2   )}
\\
 & \leq
 &
   c\,
   \Big(
   \| w \|^2_{L^2 (I,\mathbf{H}^{k'+3}   )} \| u \|^2_{C (I,\mathbf{H}^{k'+2}   )}
 + \| w \|^2_{C (I,\mathbf{H}^{k'+2}   )} \| u \|^2_{L^2 (I,\mathbf{H}^{k'+3}   )}
   \Big)
\end{array}
\end{equation}
whenever $0 \leq k' \leq k$, i.e., $\mathbf{B} (w,\cdot)$ maps
   $B^{k+2,0,0}_\mathrm{vel} (I)$ continuously into
   $B^{k,0,0}_\mathrm{for} (I)$
for all $k \in {\mathbb Z}_+$.
The constant $c$ does not depend on $u$ and $v$ and it need not be the same in different applications.

If
   $k \in {\mathbb Z}_+$ and
   $s \geq 2$,
then the space $B^{k+2,2(s-1),s-1}_\mathrm{vel} (I)$ is continuously embedded into
   $C (I,\mathbf{H}^{k+2s}   )$ and
   $L^2 (I,\mathbf{H}^{k+2s+1}   )$.
By the Sobolev embedding theorem, it is continuously embedded into the spaces
   $C (I,\mathbf{W}^{k+2(s-1),\infty}   )$ and
   $L^2 (I,\mathbf{W}^{k+2(s-1)+1,\infty}   )$.
Moreover, each derivative $\partial^j_t$ belongs both to
   $B^{k,2(s-j),s-j}_\mathrm{vel} (I)$ and
   $C (I,\mathbf{H}^{k+2(s-j)}   )$,
   $L^2 (I,\mathbf{H}^{k+1+2(s-j)}   )$,
and so it also belongs to the spaces
   $C (I,\mathbf{W}^{k+2(s-1-j),\infty}   )$ and
   $L^2 (I,\mathbf{W}^{k+1+2(s-1-j),\infty}   )$.

If
   $|\alpha'| = k' \leq k+1$ and
   $|\alpha| + 2j \leq 2(s-1)$,
then,
   similarly to \eqref{eq.Leib.k},
we get with binomial type coefficients $c_{\beta,\gamma}$ and $C_j^l$
\begin{equation}
\label{eq.Leib.k.s}
   \partial^{\alpha'+\alpha}_x \partial_t^j \mathbf{B} (w,u)
 =
   \sum_{\beta+\gamma = \alpha'+\alpha}
   \sum_{l=0}^j
   c_{\beta,\gamma} C_j^l\,
   \mathbf{B} (\partial^\beta_x \partial_t^{l} w, \partial^\gamma_x \partial_t^{j-l} u),
\end{equation}
and
\begin{eqnarray*}
\lefteqn{
   \| \partial^{\alpha'+\alpha}_x \partial_t^j \mathbf{B} (w,u) \|^2_{\mathbf{L}^2   }
}
\\
 & \leq &
   c
   \sum_{\beta+\gamma = \alpha'+\alpha \atop 0 \leq l \leq j}
   c_{\beta,\gamma} C_j^l
   \Big(
   \| \partial_t^l w \|^2_{\mathbf{H}^{|\beta|+2}   }
   \| \partial_t^{j-l} u \|^2_{\mathbf{H}^{|\gamma|+1}   }
 + \| \partial_t^l w \|^2_{\mathbf{H}^{|\beta|+1}   }
   \| \partial_t^{j-l} u \|^2_{\mathbf{H}^{|\gamma|+2}   }
   \Big)
\end{eqnarray*}
for all multiindices $\alpha'$ of modulus $0 \leq k' \leq k+1$.
Therefore,
\begin{eqnarray}
\label{eq.B.cont.9}
\lefteqn{
   \| \partial^{\alpha'+\alpha}_x \partial_t^j \mathbf{B} (w,u) \|^2_{C (I,\mathbf{L}^2   )}
 \, \leq \,
   c
   \sum_{\beta+\gamma = \alpha'+\alpha \atop 0 \leq l \leq j}
   c_{\beta,\gamma} C_j^l
}
\nonumber
\\[-.2cm]
 \! & \! \times \! & \!
   \Big(
   \| \partial_t^l w \|^2_{C (I,\mathbf{H}^{|\beta|+2}   )}
   \| \partial_t^{j-l} u \|^2_{C (I,\mathbf{H}^{|\gamma|+1}   )}
 + \|\partial_t^l w \|^2_{C (I,\mathbf{H}^{|\beta|+1}   )}
   \|\partial_t^{j-l} u \|^2_{C (I,\mathbf{H}^{|\gamma|+2}   )}
   \Big),
\nonumber
\\
\end{eqnarray}
provided $0 \leq k' \leq k$, and
\begin{eqnarray}
\label{eq.B.cont.10}
\lefteqn{
   \| \partial^{\alpha'+\alpha}_x \partial_t^j \mathbf{B} (w,u) \|^2_{L^2 (I,\mathbf{L}^2   )}
 \, \leq \,
   c
   \sum_{\beta+\gamma = \alpha'+\alpha \atop 0 \leq l \leq j}
   c_{\beta,\gamma} C_j^l
}
\nonumber
\\[-.2cm]
 \! & \! \times \! & \!
   \Big(
   \| \partial_t^l w \|^2_{C (I,\mathbf{H}^{|\beta|+2}   )}
   \| \partial_t^{j-l} u \|^2_{L^2 (I,\mathbf{H}^{|\gamma|+1}   )}
 + \| \partial_t^l w \|^2_{L^2 (I,\mathbf{H}^{|\beta|+1}    )}
   \| \partial_t^{j-l} u \|^2_{C (I,\mathbf{H}^{|\gamma|+2}   )}
   \Big),
\nonumber
\\
\end{eqnarray}
provided $0 \leq k' \leq k+1$.
As
$$
\begin{array}{rcl}
   |\beta|
 & \leq
 & k-k' + 2(s-j-1),
\\
   |\gamma|
 & \leq
 & k-k' + 2(s-j-1),
\end{array}
$$
if
   $k' \leq k$ and
   $0 \leq j \leq s-1$
in \eqref{eq.B.cont.9}, and
$$
\begin{array}{rcl}
   |\beta|
 & \leq
 & k+1-k' + 2(s-j-1),
\\
   |\gamma|
 & \leq
 & k+1-k' + 2(s-j-1),
 \end{array}
 $$
 if
    $k' \leq k+1$ and
    $0 \leq j \leq s-1$
in \eqref{eq.B.cont.10}, we see that $\mathbf{B} (w,\cdot)$ maps the space
   $B^{k+2,2(s-1),s-1}_\mathrm{vel} (I)$ continuously into
   $B^{k,2(s-1),s-1}_\mathrm{for} (I)$
for all
   $k \in {\mathbb Z}_+$ and
   $s \in \mathbb N$.

The boundedness of the operators
$
  \mathbf{B} (w,\cdot) :
  B^{k,2s,s}_\mathrm{vel} (I)
 \to
   B^{k,2(s-1),s-1}_\mathrm{for} (I),
$
now follows from the continuous embedding
   $B^{k,2s,s}_\mathrm{vel} (I) \hookrightarrow  B^{k+2,2(s-1),s-1}_\mathrm{vel} (I)$
which is valid for $s \in \mathbb N$.

Finally, since
   the bilinear form $\mathbf{B}$ is symmetric and
   $\mathbf{B} (u,u) = 2 \mathbf{D} (u)$,
we easily obtain
\begin{equation}
\label{eq.M.diff}
   \mathbf{D} (u) - \mathbf{D} (u_0)
 = \mathbf{B} (u_0, u-u_0) + (1/2)\, \mathbf{B} (u-u_0, u-u_0).
\end{equation}
Therefore, by the continuity of the mapping $B (w,\cdot)$,
\begin{eqnarray}
\label{eq.M.diff.norm}
\lefteqn{
   \| \mathbf{D} (u) - \mathbf{D} (u_0) \|_{B^{k,2(s-1),s-1}_\mathrm{for} (I)}
 \, \leq \,
   c (k,s)
}
\nonumber
\\
 \!\! & \!\! \times \!\! & \!\!
   \Big( \| u_0 \|_{B^{k+2,2(s-1),s-1}_\mathrm{vel} (I)} \| u-u_0 \|_{B^{k+2,2(s-1),s-1}_\mathrm{vel} (I)}
       + \frac{1}{2} \| u-u_0 \|^2_{B^{k+2,2(s-1),s-1}_\mathrm{vel} (I)}
   \Big)
\nonumber
\\
\end{eqnarray}
with a positive constant $c (k,s)$ independent of $u$ and $u_0$, i.e., the nonlinear operator $\mathbf{D}$
maps
   $B^{k+2,2(s-1),s-1}_\mathrm{vel} (I)$ continuously into
   $B^{k,2(s-1),s-1}_\mathrm{for} (I)$.
\end{proof}

\begin{theorem}
\label{t.exist.NS.lin.strong}
Let
   $s \in \mathbb N$,
   $k \in {\mathbb Z}_+$
and
   $w \in B^{k,2s,s}_\mathrm{vel} (I)$.
Then \eqref{eq.NS.lin} induces a bijective continuous linear mapping
\begin{equation}
\label{eq.map.Aw}
   \mathcal{A}_w :
   B^{k,2s,s}_\mathrm{vel} (I) \times B^{k+1,2(s-1),s-1}_\mathrm{pre} (I)
\to
   B^{k,2(s-1),s-1}_\mathrm{for} (I) \times V_{2s+k},
\end{equation}
which admits a continuous inverse $\mathcal{A}^{-1}_w$.
\end{theorem}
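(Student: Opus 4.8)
The plan is to verify that $\mathcal{A}_w$ is a topological isomorphism by checking in turn continuity, injectivity, surjectivity, and continuity of the inverse. Writing out the map explicitly, $\mathcal{A}_w (u,p) = (\partial_t u - \mu \varDelta u + \mathbf{B}(w,u) + \nabla p,\, \delta_t u)$, linearity is immediate and continuity follows at once by assembling the continuity statements for $\partial_t$, $\varDelta$, $\nabla$, $\delta_t$ and $\mathbf{B}(w,\cdot)$ recorded in Lemma \ref{l.NS.cont}.

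For injectivity, suppose $\mathcal{A}_w (u,p) = (0,0)$. Then $u$ is a solution of the homogeneous linearised problem \eqref{eq.NS.lin.weak} with zero data, whence $u \equiv 0$ by the uniqueness Lemma \ref{l.NS.lin.unique}. Consequently $\nabla p = 0$, and since the pressure space is built from the zero-mean Sobolev spaces $\dot{H}$ (so that, by Proposition \ref{c.Sob.d}, the only gradient-free zero-mean function is $0$), we conclude $p \equiv 0$.

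The heart of the matter is surjectivity. Given $(f,u_0)$ in the target space, I would first apply the Helmholtz projection $\mathbf{P}$ (which commutes with $\partial_t$ and $\varDelta$ by Lemma \ref{l.projection.commute} and annihilates $\nabla p$) to reduce the system to the divergence-free equation $\partial_t u - \mu \varDelta u + \mathbf{P} \mathbf{B}(w,u) = \mathbf{P} f$ for $u$ alone. Theorem \ref{t.exist.NS.lin.weak} furnishes a weak solution $u \in C(I,V_0) \cap L^2(I,V_1)$. The decisive step is then a regularity bootstrap: differentiating the equation and applying the energy estimate of Lemma \ref{p.En.Est.u} to each derivative $\partial^\alpha_x \partial^j_t u$ with $|\alpha| + 2j \leq 2s$, one upgrades $u$ to an element of $B^{k,2s,s}_\mathrm{vel}(I)$, the high Sobolev regularity $u_0 \in V_{2s+k}$ of the initial datum supplying the needed bounds on the spatial derivatives at $t=0$. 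Here the Leibniz expansion \eqref{eq.Leib.k.s} together with the mapping estimates of Lemma \ref{l.NS.cont} keep the right-hand side under control, and the parabolic scaling of the $B$-spaces (in which each time derivative costs two spatial derivatives) is exactly matched by the structure of estimate \eqref{eq.En.Est2}. Once $u$ is known to be this regular, the pressure is recovered from $\nabla p = f - \partial_t u + \mu \varDelta u - \mathbf{B}(w,u)$; by construction the Helmholtz projection of the right-hand side vanishes, so it is a gradient, and Proposition \ref{c.Sob.d} delivers a unique zero-mean $p \in B^{k+1,2(s-1),s-1}_\mathrm{pre}(I)$ of the required regularity.

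Finally, continuity of $\mathcal{A}^{-1}_w$ is automatic: $\mathcal{A}_w$ is a continuous linear bijection between Banach spaces, so the Banach open mapping theorem yields a continuous inverse. Alternatively, the very a priori estimates used in the bootstrap furnish an explicit bound $\| (u,p) \| \leq C\, \| (f,u_0) \|$ with $C$ depending only on $w$, $k$, $s$, $\mu$ and $T$, which gives the same conclusion constructively. I expect the regularity bootstrap to be the main obstacle, since one must carefully organise the induction on the order of derivatives --- peeling off time derivatives via the equation and trading them for spatial regularity --- while ensuring that all commutator terms arising from $\mathbf{B}(w,\cdot)$ are absorbed by the norms already available for $w \in B^{k,2s,s}_\mathrm{vel}(I)$.
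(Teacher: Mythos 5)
Your overall architecture coincides with the paper's: continuity from Lemma \ref{l.NS.cont}, injectivity from Lemma \ref{l.NS.lin.unique} plus Proposition \ref{c.Sob.d} to kill the pressure, surjectivity by existence plus higher regularity, recovery of $p$ via $(I-\mathbf{P})(f-\mathbf{B}(w,u))$ and Proposition \ref{c.Sob.d}, and the continuity of $\mathcal{A}_w^{-1}$ from the open mapping theorem. The injectivity, pressure-recovery and inverse steps are fine as you state them.

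There is, however, a genuine gap in the way you organise the surjectivity step. You propose to take the weak solution $u \in C(I,V_0)\cap L^2(I,V_1)$ produced by Theorem \ref{t.exist.NS.lin.weak} and then ``differentiate the equation and apply the energy estimate of Lemma \ref{p.En.Est.u} to each derivative $\partial_x^\alpha\partial_t^j u$.'' At that stage you do not yet know that $\partial_x^\alpha\partial_t^j u$ exists as an $L^2$-valued function, let alone that it lies in the class $L^2(I,V_1)$ with $\partial_t(\partial_x^\alpha\partial_t^j u)\in L^2(I,V_1')$ required to invoke Lemma \ref{l.Lions} and hence to justify the energy identity for it. Asserting the energy inequality for a derivative whose existence is exactly what the bootstrap is supposed to establish is circular. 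The paper avoids this by running all the higher-order energy estimates on the Faedo--Galerkin approximations $u_M$, which are finite sums of eigenfunctions with coefficients in $C^s[0,T]\cap H^{s+1}[0,T]$ and therefore legitimately differentiable to any order; the uniform (in $M$) bounds of Lemma \ref{l.OM.bound}, obtained by testing against $(-\varDelta)^r u_M$ and applying Gronwall inductively in $r$, survive the passage to the weak limit and only then yield the regularity of $u$. The time regularity is subsequently extracted from the equation $\partial_t u = \mu\varDelta u + \mathbf{P}(f-\mathbf{B}(w,u))$ together with Lemma \ref{l.projection.commute}, and the step from $s'$ to $s'+1$ is a separate induction. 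A second, smaller point you gloss over: in the commutator expansion of $\partial_x^\alpha\mathbf{B}(w,u_M)$ the symmetric estimate \eqref{eq.B.cont.6}--\eqref{eq.B.cont.7} cannot be used wholesale, because the top-order term in $u_M$ must be split off and absorbed by Gronwall rather than bounded by a norm of $u_M$ that is not yet controlled; the paper treats this term separately in the proof of Lemma \ref{l.OM.bound}. Your plan can be repaired by transplanting the bootstrap to the Galerkin level, but as written it does not close.
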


\begin{proof}
Indeed, the continuity of $\mathcal{A}_w$ follows from Lemma \ref{l.NS.cont}.
Let
$$
\begin{array}{rcl}
   (u,p)
 & \in
 & B^{k,2s,s}_\mathrm{vel} (I) \times B^{k+1,2(s-1),s-1}_\mathrm{pre} (I),
\\
   \mathcal{A}_w (u,p) = (f,u_0)
 & \in
 & B^{k,2(s-1),s-1}_\mathrm{for} (I) \times V_{k+2s}.
\end{array}
$$
The integration by parts with the use of  \eqref{eq.deRham} yields
\begin{equation}
\label{eq.by.parts.1}
   - (\varDelta u,u)_{\mathbf{L}^2   }
 = \| \nabla u (\cdot,t) \|^2_{\mathbf{L}^2   }.
\end{equation}
As $\mathrm{div}\, u = 0$ in ${\mathbb R}^3 \times [0,T]$, we see that
\begin{equation}
\label{eq.Duu.zero.1}
   (\nabla p,u)_{\mathbf{L}^2   }
 = - (p,\mathrm{div}\, u)_{L^2   }
 = 0.
\end{equation}
Formulas
   \eqref{eq.dt},
   \eqref{eq.Bu.zero},
   \eqref{eq.by.parts.1} and
   \eqref{eq.Duu.zero.1}
readily imply that $u$ is a weak solution to \eqref{eq.NS.lin}, i.e., \eqref{eq.NS.lin.weak} is
fulfilled.

By the Sobolev embedding theorem, see \eqref{eq.Sob.index}, the space 
$B^{k,2s,s}_\mathrm{vel} (I)$ 
is continuously embedded into $L^2 (\mathbf{L}^\infty   )$.
By Lemma \ref{l.NS.lin.unique}, if the data $(f,u_0)$  are zero then
   $u = 0$ and
   $\nabla p (x,t) = 0$
for each $t \in [0,T]$.
Hence it follows that the pressure $p$ is equal identically to a function $c (t)$ on the interval
$[0,T]$.
But then Proposition \ref{c.Sob.d} implies that $p \equiv 0$, and so operator $\mathcal{A}_w$ is
injective.

Let
   $(f,u_0)$ be arbitrary data in $B^{k,2(s-1),s-1}_\mathrm{for} (I) \times V_{2s+k}$
and
   $\{ u_M \}$ be the sequence of the corresponding Faedo-Galerkin approximations constructed
   in the proof of Theorem \ref{t.exist.NS.lin.weak}.
The Fourier coefficients
   $f^{m,j} (t) = \langle f (\cdot,t), v_{m,j} \rangle$
belong to $C^{s-1} [0,T] \cap H^s [0,T]$, and the components
   $a^{m,j}_{m',j'} (t)$
belong to $C^{s} [0,T] \cap H^{s+1} [0,T]$, see \eqref{eq.C} and \eqref{eq.CC}.
Since $w \in B^{k,2s,s}_\mathrm{vel} (I)$, formula \eqref{eq.C} shows that the entries of the matrix
$$
   \exp \Big( \int_0^t A^{(M)} (s) ds \Big)
$$
are actually of the class $C^{s+1} [0,T] \cap H^{s+2} [0,T]$.
So, all components of the vector $c_M (t)$ belong to $C^{s} [0,T] \cap H^{s+1} [0,T]$.
In particular, we have
   $u_M \in B^{k',2s,s}_\mathrm{vel} (I)$ and
   $\partial_t u_M \in B^{k',2(s-1),s-1}_\mathrm{for} (I)$
for each $k' \in {\mathbb N}$.

From \eqref{eq.ODU.lin.0} it follows that
$$
\begin{array}{rcl}
 \!\! & \!\!
 \!\! & \!\!
   \displaystyle
   (\partial_t^{j+1} u_M\!, v_{m,j})_{\mathbf{L}^2   }
   \!
 + \mu \!
   \sum_{|\alpha|=1}
   (\partial^\alpha \partial^j_t u_M\!, \partial^\alpha v_{m,j})_{\mathbf{L}^2   }
 + (\partial_t^j \mathbf{B} (w,u_M\!), v_{m,j})_{\mathbf{L}^2   }
\\
 \!\! & \!\!
   =
 \!\! & \!\!
   (\partial_t^j f, v_{m,j})_{\mathbf{L}^2   }
\end{array}
$$
for all $m \in \{ 0 \} \cup \mathbb{N}_{2,3}$ satisfying $0 \leq m \leq M$, and $j = 1, \ldots, J_m$.
Moreover, $u_M (\cdot,0) = u_{0,M}$.
If we multiply the foregoing equation corresponding to indices $m$ and $j$ by
   $\left( m (2 \pi / \ell)^2  \right)^r c^{m,j}_M$
with $r \in {\mathbb Z}_+$, then, after
   summation with respect to $m$ and $j$ and
   integration by parts with respect to the space variables using \eqref{eq.dt},
we obtain
\begin{eqnarray}
\label{eq.um.bound.OM.r}
\lefteqn{
   \frac{d}{dt}\, \| \nabla^r u_M (\cdot,t) \|^2_{\mathbf{L}^2   }
 + 2 \mu\, \| \nabla^{r+1} u_M (\cdot,t) \|^2_{\mathbf{L}^2   }
}
\nonumber
\\
 & = &
   2\, ( (-\varDelta)^{(r-1)/2} (\mathbf{B} (w,u_M) - f) (\cdot,t),
         (-\varDelta)^{(r+1)/2} u_M (\cdot,t) )_{\mathbf{L}^2   }
\nonumber
\\
\end{eqnarray}
for all $t \in [0,T]$.
Furthermore, using \eqref{eq.B.cont.1}, we get
\begin{equation}
\label{eq.f.differ.est}
   2\, | ((-\varDelta)^{(r-1)/2} f, (-\varDelta)^{(r+1)/2} u_M)_{\mathbf{L}^2   } |
 \leq
   \frac{2}{\mu} \| \nabla^{r-1} f \|^2_{\mathbf{L}^2   }
 + \frac{\mu}{2} \| \nabla^{r+1}   u_M \|^2 _{\mathbf{L}^2    } ,
\end{equation}
and similarly
\begin{eqnarray}
\label{eq.D.differ.est}
\lefteqn{
   2\, | (\mathbf{B} (w, u_M), (-\varDelta)^{r} u_M)_{\mathbf{L}^2   } |
}
\nonumber
\\
 & = &
  2 | ((-\varDelta)^{(r-1)/2} \mathbf{B} (w, u_M),
       (-\varDelta)^{(r+1)/2} u_M)_{\mathbf{L}^2   } |
\nonumber
\\
 & \leq &
\frac{c}{\mu} 
   \| (-\varDelta)^{(r-1)/2} \mathbf{B} (w, u_M) \|^2_{\mathbf{L}^2   }
 + 
  \frac{\mu}{c} \| \nabla^{r+1} u_M \|^2_{\mathbf{L}^2   }
\nonumber
\\
\end{eqnarray}
with an arbitrary positive constant $c$.

\begin{lemma}
\label{l.OM.bound}
Suppose $k \in {\mathbb Z}_+$.
If $(f,u_0) \in B^{k,0,0}_\mathrm{for} (I) \times V_{k+2}$ then
$$
   \| \nabla^{k'} u_M \|^2_{C (I,\mathbf{L}^2   )}
 + \mu\, \| \nabla^{k'+1} u_M \|^2_{L^2 (I,\mathbf{L}^2   )}
 \leq
   c_{k'} (\mu,w,f,u_0)
$$
for any $0 \leq k' \leq k+2$, the constants
   $c_{k'} (\mu,w,f,u_0) > 0$
depending on
   $k'$ and
   $\mu$
and the norms
   $\| w |_{B^{k,2,1}_\mathrm{vel} (I)}$,
   $\| f \|_{B^{k,0,0}_\mathrm{for} (I)}$,
   $\| u_0 \|_{V_{k+2}}$
but not on $M$.
\end{lemma}

\begin{proof}
We argue by induction.

First, let $k=0$.
Then, similarly to \eqref{eq.B.cont.1}, using the Sobolev embedding theorem we get
\begin{eqnarray}
\label{eq.B.cont.1M}
   \!
   \int_0^t
   \!
   \| \mathbf{B} (w,u_M) \|^2_{\mathbf{L}^2   } ds
 \!\! & \!\! \leq \!\! & \!\!
   \!
   \int_0^t
   \!
   (\| w \|^2_{\mathbf{L}^\infty   } \| \nabla u_M \|^2_{\mathbf{L}^2   }
  + \| \nabla w \|^2_{\mathbf{L}^\infty   } \| u_M \|^2_{\mathbf{L}^2   }
    )
    ds
\nonumber
\\
 \!\! & \!\! \leq \!\! & \!\!
   \!
   \int_0^t
   \!
   \| w \|^2_{\mathbf{H}^2   } \| \nabla u_M \|^2_{\mathbf{L}^2   }
   ds
 + \| w \|^2_{L^2 (I,\mathbf{H}^3   )} \| u_M \|^2_{C (I,\mathbf{L}^2   )}.
\nonumber
\\
\end{eqnarray}

From \eqref{eq.um.bound}, Lemma \ref{p.En.Est.u} and the Sobolev embedding theorem it
follows that
$$
   \| w \|^2_{L^2 (I,\mathbf{H}^3   )} \| u_M \|^2_{C (I,\mathbf{L}^2   )}
 \leq
    c (w)\, \| (f,u_0) \|^2_{0,\mu,T}
$$
where $c (w)$ is a positive constant depending on
   $\| w \|^2_{L^2 (I,\mathbf{H}^3   )}$.
Next, according to Lemma \ref{l.basis.V} and Proposition \ref{p.Vs}, for each $k' \in 
{\mathbb Z}_+$,
the vector $\varDelta^{k'/2} u_{0,M}$ is the $\mathbf{L}^2   \,$-orthogonal projection
of $\varDelta^{k'/2} u_{0}$ onto to the linear span of the finite system
$$
   \{ v_{m,j} \}_{m \in \{ 0 \} \cup \mathbb{N}_{2,3} \atop 0 \leq m \leq M}.
$$
Hence, using the properties of projection and 
\eqref{eq.by.parts.0}, we see that
\begin{equation}
\label{eq.Bessel}
   \| \nabla^{k'} u_{0,M} \|^2_{\mathbf{L}^2   }
 \leq
   \| \nabla^{k'} u_{0} \|^2_{\mathbf{L}^2   }
\end{equation}
for all nonnegative integers $k'$ and $M$.
Now, combining
   \eqref{eq.um.bound.OM.r} for $r=0$
and
   \eqref{eq.f.differ.est},
   \eqref{eq.D.differ.est},
   \eqref{eq.B.cont.1M}
with integration  over the interval $[0,t]$, we arrive at the estimate
\begin{eqnarray}
\label{eq.um.bound.OM.1a}
\lefteqn{
   \| \nabla u_M (\cdot, t) \|^2_{\mathbf{L}^2   }
 + \mu
   \int_0^t \| \nabla^2 u_M (\cdot, s) \|^2_{\mathbf{L}^2   } ds
}
\nonumber
\\
 & \leq &
   \| \nabla u_{0,M} \|^2_{\mathbf{L}^2   }
 + \frac{2}{\mu} \| f \|^2_{L^2 (I,\mathbf{L}^2   )}
 + c_{0,0}
 + \frac{4}{\mu}
   \int_0^t
   \| w \|^2_{{\mathbf H}^2   } \| \nabla u_M \|^2_{\mathbf{L}^2   }
   ds
\nonumber
\\
\end{eqnarray}
for all $t \in [0,T]$.
Here the constant $c_{1,0}$ depends on
   $\| w \|_{B^{0,2,1}_\mathrm{vel} (I)}$ and
   $\|(f,u_0) \|_{0,\mu,T}$,
only.

At this point inequality \eqref{eq.um.bound.OM.1a} and Gronwall's Lemma \ref{l.Perov}
yield 
\begin{equation}
\label{eq.um.bound.OM.strong.1}
   \| \nabla u_M \|^2_{C (I,\mathbf{L}^2   )}
 + \mu\, \| \nabla^2 u_M \|^2_{L^2 (I,\mathbf{L}^2   )}
 \leq
   c_{1} (\mu,w,f,u_0)
\end{equation}
with a constant $c_{1} (\mu,w,f,u_0)$ depending on
   $\mu$
and
   $\| w \|_{B^{0,2,1}_\mathrm{vel} (I)}$,
   $\| f \|_{B^{0,0,0}_\mathrm{for} (I)}$ and
   $\| u_0 \|_{V_1}$,
only.

Next, on combining
   \eqref{eq.um.bound.OM.r} for $r = 1$
and
   \eqref{eq.B.cont.3},
   \eqref{eq.f.differ.est},
   \eqref{eq.D.differ.est}
with integration  over the interval $[0,t]$ we obtain
\begin{eqnarray}
\label{eq.um.bound.OM.1b}
\lefteqn{
   \| \nabla^2 u_M (\cdot, t) \|^2_{\mathbf{L}^2   }
 + \mu \int_0^t \| \nabla^3 u_M (\cdot, s) \|^2_{\mathbf{L}^2   } ds
}
\nonumber
\\
 & \leq &
   \| \nabla^2 u_{0,M} \|^2_{\mathbf{L}^2   }
 + \frac{2}{\mu} \| \nabla f \|^2_{L^2 (I,\mathbf{L}^2   )}
 + c_{0,0}
 + c_{1,0}
   \frac{2}{\mu}
   \int_0^t
   \| w \|^2_{{\mathbf H}^2   } \| \nabla^2 u_M \|^2_{\mathbf{L}^2   }
   ds
\nonumber
\\
 & + &
   c_{1,0}
   \frac{2}{\mu}
   \Big( \| w \|^2_{L^2 (I,\mathbf{H}^3   )} \| \nabla u_M \|^2_{C (I,\mathbf{L}^2   )}
       + \| \nabla^2 w \|^2_{C (I,\mathbf{L}^2   )} \| u_M \|^2_{L^2 (I,\mathbf{H}^2   )}
   \Big).
\nonumber
\\
\end{eqnarray}
From inequalities
   \eqref{eq.Bessel},
   \eqref{eq.um.bound.OM.strong.1},
   \eqref{eq.um.bound.OM.1b}
and Gronwall's Lemma \ref{l.Perov} 
it follows readily that
$$
   \| \nabla^2 u_M \|^2_{C (I,\mathbf{L}^2   )}
 + \mu\, \| \nabla^3 u_M \|^2_{L^2 (I,\mathbf{L}^2   )}
 \leq
    c_{2} (\mu,w,f,u_0),
$$
where $c_{2} (\mu,w,f,u_0)$ is a constant depending on
   $\mu$
and
   $\| w \|_{B^{0,2,1}_\mathrm{vel} (I)}$,
   $\| f \|_{B^{0,0,0}_\mathrm{vel} (I)}$ and
   $\| u_0 \|_{V_2}$,
only.

Assume that the sequence $\{ u_M \}$ is bounded in the space
   $B^{k,2,1}_\mathrm{vel} (I)$,
for given data $(f,u_0) \in B^{k,0,0}_\mathrm{for} (I) \times V_{k+2}$,
   with $k = k' \in \mathbb N$,
i.e.,
\begin{equation}
\label{eq.um.bound.OM.strong.k.prime}
   \| \nabla^{k''} u_M \|^2_{C (I,\mathbf{L}^2   )}
 + \mu\, \| \nabla^{k''+1} u_M \|^2_{L^2 (I,\mathbf{L}^2   )}
 \leq
   c_{k''} (\mu,w,f,u_0),
\end{equation}
if $0 \leq k'' \leq k'+2$, where the constants $c_{k''} (\mu,w,f,u_0)$ depend on
   $\mu$
and the norms
   $\| w \|_{B^{k,2,1}_\mathrm{vel} (I)}$,
   $\| f \|_{B^{k,0,0}_\mathrm{for} (I)}$,
   $\| u_0 \|_{V_{k+2}}$
but not on $M$.
Then, combining
   \eqref{eq.f.differ.est},
   \eqref{eq.D.differ.est}
with integration over the time interval $[0,t]$, we get
\begin{eqnarray}
\label{eq.um.bound.OM.1k}
\lefteqn{
   \| \nabla^{k'+3} u_M  (\cdot,t) \|^2_{\mathbf{L}^2   }
 + \mu \int_0^t \| \nabla^{k'+4} u_M  (\cdot,s) \|^2_{\mathbf{L}^2   } ds
}
\nonumber
\\
 & \leq &
   \| \nabla^{k'+3} u_{0,M} \|^2_{\mathbf{L}^2   }
 + \| \nabla^{k'+2} f \|^2_{L^2 (I,\mathbf{L}^2 )}
 + \frac{2}{\mu} \| \nabla^{k'+2} \mathbf{B} (w,u_M) \|^2_{L^2 (I,\mathbf{L}^2 )}.
\nonumber
\\
\end{eqnarray}
In this way we need to evaluate the last summand on the right-hand side of 
\eqref{eq.um.bound.OM.1k}.
Note that the estimate at \eqref{eq.Leib.k} does not fit our purpose because it uses the 
symmetry between $u$ and $w$ in the definition of $\mathbf{B} (w,u)$.
Here the symmetry is lost because of the lack of information about the desired solution $u$.
Still, according to \eqref{eq.Leib.k},
\begin{eqnarray*}
\lefteqn{
   \| \nabla^{k'+2} \mathbf{B} (w,u_M) \|^2_{L^2 (I,\mathbf{L}^2   )}
 \, = \,
   \| \nabla^{k'+2} (u_M \cdot \nabla w + w \cdot \nabla u_M) \|^2_{L^2 (I,\mathbf{L}^2   )}
}
\\
 & \leq &
   c_{k'+2,0}\,
   \frac{2}{\mu}
   \int_0^t
   \| \nabla^{k'+3} u_M (\cdot,s) \|^2_{\mathbf{L}^2   }
   \| w (\cdot,s) \|^2_{\mathbf{H}^2   }
   ds
\hspace{2cm}
\\
 & + &
   \frac{2}{\mu}
   \sum_{j=1}^{k'+2}
   c_{k'+2,j}\,
   \| \nabla^{k'+3-j} u_M \|^2_{C (I,\mathbf{L}^2   )}
   \| w \|^2_{L^2 (I,\mathbf{H}^{j+2}   )}
\\
 & + &
   \frac{2}{\mu}
   \sum_{j=0}^{k'+1}
   c_{k'+2,j}\,
   \| \nabla^{k'+3-j} w \|^2_{C (I,\mathbf{L}^2   )}
   \| u_M \|^2_{L^2 (I,\mathbf{H}^{j+2}   )}
\\
 & + &
   c_{k'+2,k'+2}\,
   \frac{2}{\mu}\,
   \| \nabla w \|^2_{C (I,\mathbf{L}^\infty   )}
   \| \nabla^{k'+2} u \|^2_{L^2 (I,\mathbf{L}^{2}   )},
\end{eqnarray*}
where the last summand is distinguished because of the lack of symmetry between $w$ and $u$.
All terms on the right-hand side of this inequality can be estimated due to the inductive
assumption of \eqref{eq.um.bound.OM.strong.k.prime} and the Sobolev embedding theorem.
In particular,
$$
   \| \nabla w \|^2_{C (I,\mathbf{L}^\infty   )}
   \| \nabla^{k'+2} u \|^2_{L^2 (I,\mathbf{L}^{2}   )}
 \leq
   \mathrm{const}\,
   \| w \|^2_{C (I,\mathbf{H}^3   )}
   \| \nabla^{k'+2} u \|^2_{L^2 (I,\mathbf{L}^{2}   )}
$$
with a suitable Sobolev constant.
From
   \eqref{eq.Bessel},
   \eqref{eq.um.bound.OM.strong.k.prime} and
   \eqref{eq.um.bound.OM.1k},
it follows that
\begin{eqnarray}
\label{eq.um.bound.OM.1k.A}
\lefteqn{
   \| \nabla^{k'+3} u_M  (\cdot, t) \|^2_{\mathbf{L}^2   }
 + \mu \int_0^t \| \nabla^{k'+4} u_M  (\cdot,s) \|^2_{\mathbf{L}^2   } ds
 \, \leq \,
   \| \nabla^{k'+3} u_{0} \|^2_{\mathbf{L}^2   }
}
\nonumber
\\
 & + &
   \| \nabla^{k'+2} f \|^2_{L^2 (I,\mathbf{L}^2   )}
 + c_{k'+2,0}\,
   \frac{2}{\mu}
   \int_0^t
   \| \nabla^{k'+3} u_M (\cdot,s) \|^2_{\mathbf{L}^2   }
   \| w (\cdot,s) \|^2_{\mathbf{H}^2   }
   ds
\nonumber
\\
 & + &
   R_{k'+3} (\mu,w,f,u_0),
\nonumber
\\
\end{eqnarray}
for all $t \in [0,T]$, the remainder $R_{k'+3} (w,f,u_0)$ depends on
   $\mu$
and
   $\| w \|_{B^{k'+1,2,1}_\mathrm{vel} (I)}$,
   $\| f \|_{B^{k'+1,0,0}_\mathrm{for} (I)}$,
   $\| u_0 \|_{V_{k'+2}}$,
only.

As before,
   \eqref{eq.Bessel},
   \eqref{eq.um.bound.OM.strong.k.prime},
   \eqref{eq.um.bound.OM.1k.A}
and Gronwall's Lemma \ref{l.Perov} yield
$$
   \| \nabla^{k'+3} u_M \|^2_{C (I,\mathbf{L}^2   )}
 + \mu\, \| \nabla^{k'+4} u_M \|^2_{L^2 (I,\mathbf{L}^2   )}
 \leq
   c_{k'+3} (\mu,w,f,u_0),
$$
the constant $c_{k'+3} (w,f,u_0)$ depends on
   $\mu$
and
   $\| w \|_{B^{k'+1,2,1}_\mathrm{vel} (I)}$,
   $\| f \|_{B^{k'+1,0,0}_\mathrm{for} (I)}$ and
   $\| u_0 \|_{V_{k'+3}}$
but not on the index $M$.
When combined with the induction hypothesis of \eqref{eq.um.bound.OM.strong.k.prime}, the
latter estimate implies that the assertion of the lemma is true for all $k \in {\mathbb Z}_+$.
\end{proof}

The bounds of Lemma \ref{l.OM.bound} mean precisely that the sequence $\{ u_M \}$ is bounded
in the space
   $C (I,\mathbf{H}^{k+2}   )\cap L^2 (I,\mathbf{H}^{k+3}   )$
if
   the data $(f,u_0)$ belong to $B^{k,0,0}_\mathrm{for} (I) \times V_{k+2}$.
Hence it follows that we may extract a subsequence $\{ u_{N'} \}$, such that

1) For any multiindex $\alpha$ satisfying $|\alpha| \leq k+3$, the sequence 
$\{ \partial^{\alpha}_x  u_{N'} \}$
converges weakly in $L^2 (I,\mathbf{L}^{2}   )$.

2) 
The sequence $\{   u_{N'} \} $ converges weakly-$^{\ast}$ in $L^\infty (I,\mathbf{H}^{k+2} )
 \cap L^2 (I,\mathbf{H}^{k+3}   )$ to an element $u$.

By the Sobolev embedding theorem, the space $B^{k,2,1}_\mathrm{vel} (I)$ is continuously 
embedded into
   $L^2 (I,\mathbf{L}^\infty   )$,
and so $w \in L^2 (I,\mathbf{L}^\infty   )$.
Therefore, by Theorem \ref{t.exist.NS.lin.weak}, the limit vector field $u$ is a unique 
solution to equations \eqref{eq.NS.lin.weak} of Bochner class
   $L^\infty (I,V_{k+2}) \cap L^2 (I,V_{k+3}) \cap C (I,V_0)$.
Moreover, by \eqref{eq.B.cont.6}, the field $\mathbf{B} (w, u)$ belongs to
   $L^\infty (I,\mathbf{H}^k   ) \cap L^2 (I,\mathbf{H}^{k+1}   )$.

Actually, Proposition \ref{p.Vs} and  \eqref{eq.NS.lin.weak} imply that
\begin{equation}
\label{eq.um.bound.OM.1t}
   \partial_t u
 = \mu \varDelta u + \mathbf{P} (f - \mathbf{B}(w, u))
\end{equation}
in the sense of distributions in ${\mathbb R}^3\times (0,T)$.
According to Lemma \ref{l.projection.commute}, the projection $\mathbf{P}$ maps
   $L^\infty (I,\mathbf{H}^k   ) \cap L^2 (I,\mathbf{H}^{k+1}   )$
continuously into
   $L^\infty (I,V_k) \cap L^2 (I,V_{k+1})$.
Hence, \eqref{eq.um.bound.OM.1t} implies that the derivative $\partial_t u$ belongs to
   $L^\infty (I,V_k) \cap L^2 (I,V_{k+1})$,
too.

Clearly,
   $\partial^\alpha_x \partial_t u \in L^2 (I,V_1')$ and
   $\partial^\alpha_x u \in L^2 (I,V_1)$
for all multiindices $\alpha \in {\mathbb Z}^3_+$ satisfying $|\alpha| \leq k+2$.
Hence,  Lemma \ref{l.Lions} yields
   $\partial^\alpha_x u \in C^2 (I,\mathbf{L}^{2}   )$
if $|\alpha| \leq k+2$.
Applying \eqref{eq.B.cont.6}, we see that
   $\mathbf{B} (w, u) \in C (I,\mathbf{H}^k   ) \cap L^2 (I,\mathbf{H}^{k+1}   )$.
According to Lemma \ref{l.projection.commute}, the projection $\mathbf{P}$ maps the space
   $C (I,\mathbf{H}^k   ) \cap L^2 (I,\mathbf{H}^{k+1}   )$
continuously into
   $C (I,V_k) \cap L^2 (I,V_{k+1})$.
So,
   $\partial_t u \in C (I,V_k) \cap L^2 (I,V_{k+1})$,
too, see \eqref{eq.um.bound.OM.1t}.

We have thus proved that \eqref{eq.NS.lin.weak} admits a unique solution
   $u \in  B^{k,2,1}_\mathrm{vel} (I)$
for any data $(f,u_0) \in B^{k,0,0}_\mathrm{for} (I) \times V_{k+2}$.
On the other hand, according to Proposition \ref{p.Vs} the vector field
   $(I - \mathbf{P}) (f - \mathbf{B} (w, u))$
belongs to $B^{k,0,0}_\mathrm{for} (I) \cap \ker (\mathrm{rot})$.
Hence, from Proposition \ref{c.Sob.d} we conclude that there is a unique function
   $p \in B^{k+1,0,0}_\mathrm{pre} (I)$
such that
\begin{equation}
\label{eq.nabla.p}
   \nabla p
 = (I - \mathbf{P}) (f - \mathbf{B}(w, u))
\end{equation}
in ${\mathbb R}^3 \times [0,T]$.
On adding equalities \eqref{eq.um.bound.OM.1t} and \eqref{eq.nabla.p} we readily deduce that
the pair
   $(u,p) \in B^{k,2,1}_\mathrm{vel} (I) \times B^{k+1,0,0}_\mathrm{pre} (I)$
is a unique solution to \eqref{eq.NS.lin} related to the data
   $(f,u_0) \in B^{k,0,0}_\mathrm{for} (I) \times V_{k+2}$,
i.e., the statement of the theorem concerning the surjectivity of the mapping $\mathcal{A}_w$
holds for $s=1$ and for any $k \in {\mathbb Z}_+$.

We finish the proof of the theorem with induction in $s \in \mathbb N$.
More precisely, assume that the assertion of the theorem concerning the surjectivity of the
mapping $\mathcal{A}_w$ holds for
   some $s = s' \in \mathbb N$ and
   any $k \in {\mathbb Z}_+$.
Let
   $(f,u_0) \in B^{k,2s',s'}_\mathrm{for} (I) \times V_{2 (s'+1)+k}$.
As
$$
   B^{k,2s',s'}_\mathrm{for} (I) \times V_{2 (s'+1)+k}
 \hookrightarrow
   B^{k+2,2(s'-1),s'-1}_\mathrm{for} (I) \times V_{2s'+k+2},
$$
we see that according to the induction assumption there is a unique solution $(u,p)$ to
\eqref{eq.NS.lin} which belongs to
   $ B^{k+2,2s',s'}_\mathrm{vel} (I) \times B^{k+3,2(s'-1),s'-1}_\mathrm{pre} (I)$.

By Lemmata \ref{l.projection.commute} and \ref{l.NS.cont}, the fields
   $\varDelta u$,
   $\mathbf{B}(w,u)$ and
   $\mathbf{P} (f - \mathbf{B}(w,u))$
belong to
   $B^{k,2s',s'}_\mathrm{for} (I)$,
and so the derivative $\partial_t u$ is in this space, too, because of \eqref{eq.um.bound.OM.1t}.
As a consequence, \eqref{eq.nabla.p} implies
   $\nabla p \in B^{k,2s',s'}_\mathrm{for} (I)$,
and so
   $p \in B^{k+1,2s',s'}_\mathrm{pre} (I)$.

Thus, the pair $(u,p)$ actually belongs to
   $B^{k,2(s'+1),s'+1}_\mathrm{vel} (I) \times B^{k+1,2s',s'}_\mathrm{pre} (I)$,
i.e., the mapping $\mathcal{A}_w$ of \eqref{eq.map.Aw} is surjective for all $k \in {\mathbb Z}_+$
and $s \in {\mathbb N}$.

Finally, as the mapping $\mathcal{A}_w$ is bijective and continuous, the continuity of the inverse
$\mathcal{A}^{-1}_w$ follows from the inverse mapping theorem for Banach spaces.
\end{proof}

Since problem \eqref{eq.NS.lin} is a linearisation of the Navier-Stokes equations at an arbitrary
vector field $w$, it follows from Theorem \ref{t.exist.NS.lin.strong} that the nonlinear mapping
given by the Navier-Stokes equations is locally invertible.
The implicit function theory for Banach spaces even implies that the local inverse mappings can
be obtained from the contraction principle of Banach.
In this way we obtain what we shall call the open mapping theorems for problem \eqref{eq.NS}.

\begin{theorem}
\label{t.open.NS.short}
Let
   $s \in \mathbb N$ and
   $k \in {\mathbb Z}_+$.
Then \eqref{eq.NS} induces an injective continuous nonlinear mapping
\begin{equation}
\label{eq.map.A}
   \mathcal{A} :
   B^{k,2s,s}_\mathrm{vel} (I) \times B^{k+1,2(s-1),s-1}_\mathrm{pre} (I)
\to
   B^{k,2(s-1),s-1}_\mathrm{for} (I) \times V_{2s+k}
\end{equation}
which is moreover open.
\end{theorem}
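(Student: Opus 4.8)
The plan is to deduce all three asserted properties of $\mathcal{A}$ — continuity, injectivity and openness — from the linear theory already in place, by viewing $\mathcal{A}$ as a $C^1$ mapping between Banach spaces whose Fréchet derivative at each point is exactly the isomorphism $\mathcal{A}_w$ of Theorem \ref{t.exist.NS.lin.strong}. Writing $\mathcal{A}(u,p) = (\partial_t u - \mu \varDelta u + \mathbf{D}(u) + \nabla p,\ \delta_t u)$, the continuity of $\mathcal{A}$ is immediate from Lemma \ref{l.NS.cont}: the linear pieces $\partial_t$, $\varDelta$, $\nabla$ and $\delta_t$ are continuous on the relevant scales, while the quadratic term $\mathbf{D}$ is continuous by \eqref{eq.M.diff.norm}.

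Next I would compute the Fréchet derivative. The algebraic identity \eqref{eq.M.diff}, namely $\mathbf{D}(u)-\mathbf{D}(w) = \mathbf{B}(w,u-w) + \tfrac12 \mathbf{B}(u-w,u-w)$, combined with the boundedness of $\mathbf{B}(w,\cdot)$ furnished by Lemma \ref{l.NS.cont}, shows that $\mathbf{D}$ is Fréchet differentiable at every $w$ with derivative $v \mapsto \mathbf{B}(w,v)$ and with a genuinely quadratic remainder of order $O(\|u-w\|^2)$ in the strong Bochner-Sobolev norms. Hence the derivative of $\mathcal{A}$ at a point $(w,q)$, applied to a direction $(v,r)$, equals $(\partial_t v - \mu\varDelta v + \mathbf{B}(w,v) + \nabla r,\ \delta_t v)$, which is precisely the linear operator $\mathcal{A}_w$ of \eqref{eq.map.Aw}. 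Since $w \mapsto \mathbf{B}(w,\cdot)$ is linear and bounded, the assignment $(w,q) \mapsto D\mathcal{A}(w,q) = \mathcal{A}_w$ is norm-continuous, so $\mathcal{A}$ is of class $C^1$.

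Openness then follows formally. By Theorem \ref{t.exist.NS.lin.strong} the derivative $D\mathcal{A}(w,q) = \mathcal{A}_w$ is a bounded linear isomorphism onto $B^{k,2(s-1),s-1}_\mathrm{for}(I) \times V_{2s+k}$ at every point. The inverse function theorem for Banach spaces, whose local inverse is produced by the Banach contraction principle, shows that $\mathcal{A}$ is a local $C^1$ diffeomorphism near each point and therefore maps a sufficiently small ball about any point onto an open set. As openness is a local property — the image of an arbitrary open set is the union of the images of such small balls — the mapping $\mathcal{A}$ is open.

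It remains to verify injectivity. Suppose $\mathcal{A}(u_1,p_1) = \mathcal{A}(u_2,p_2) = (f,u_0)$. For $s \in \mathbb{N}$ we have $2s+k \geq 2$, so $B^{k,2s,s}_\mathrm{vel}(I)$ embeds into $C(I,\mathbf{H}^2) \subset C(I,\mathbf{L}^\infty)$ and, the torus having finite measure, into $L^{\mathfrak s}(I,\mathbf{L}^{\mathfrak r})$ for the Ladyzhenskaya-Prodi-Serrin exponents \eqref{eq.s.r}; both $u_1,u_2$ thus lie in the uniqueness class $L^\infty(I,V_0) \cap L^2(I,V_1) \cap L^{\mathfrak s}(I,\mathbf{L}^{\mathfrak r})$ and, by the integrations by parts of the type \eqref{eq.by.parts.1} and \eqref{eq.Duu.zero.1}, solve the weak problem \eqref{eq.NS.weak} with the same data, whence Theorem \ref{t.NS.unique} gives $u_1 = u_2$. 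The equation then forces $\nabla(p_1-p_2) = 0$, and since elements of $B^{k+1,2(s-1),s-1}_\mathrm{pre}(I)$ have vanishing spatial mean, Proposition \ref{c.Sob.d} yields $p_1 = p_2$. The step requiring the most care is the second one, namely turning the bilinear splitting \eqref{eq.M.diff} into an honest Fréchet differential with controlled remainder so as to identify $D\mathcal{A}$ with $\mathcal{A}_w$; once this is done, everything else reduces to direct appeals to Theorems \ref{t.exist.NS.lin.strong} and \ref{t.NS.unique} and the inverse function theorem.
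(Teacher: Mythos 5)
Your proposal is correct and follows essentially the same route as the paper: continuity from Lemma \ref{l.NS.cont}, injectivity from the Sobolev embedding into the Ladyzhenskaya--Prodi--Serrin class together with Theorem \ref{t.NS.unique} and Proposition \ref{c.Sob.d}, and openness by identifying the Fr\'echet derivative with $\mathcal{A}_w$ via \eqref{eq.M.diff} and invoking Theorem \ref{t.exist.NS.lin.strong} and the inverse/implicit function theorem for Banach spaces. The only difference is cosmetic (ordering of the steps and a slightly more explicit verification that $\mathcal{A}$ is $C^1$).
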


The principal significance of the theorem is in the assertion that for each point
   $(u_0,p_0) \in B^{k,2s,s}_\mathrm{vel} (I) \times B^{k+1,2(s-1),s-1}_\mathrm{pre} (I)$
there is a neighbourhood $\mathcal{V}$ of the image $\mathcal{A} (u_0,p_0)$ in
   $B^{k,2(s-1),s-1}_\mathrm{for} (I) \times V_{2s+k}$,
such that $\mathcal{A}$ is a homeomorphism of the open set
   $\mathcal{U} := \mathcal{A}^{-1} (\mathcal{V})$
onto $\mathcal{V}$.

\begin{proof}
Indeed, the continuity of the mapping $\mathcal{A}$ is clear from Lemma \ref{l.NS.cont}.
Moreover, suppose that
$$
\begin{array}{rcl}
   (u,p)
 & \in
 & B^{k,2s,s}_\mathrm{vel} (I) \times B^{k+1,2(s-1),s-1}_\mathrm{pre} (I),
\\
   \mathcal{A} (u,p)
 \, = \, (f,u_0)
 & \in
 & B^{k,2(s-1),s-1}_\mathrm{for} (I) \times V_{k+2s}.
\end{array}
$$
As in the proof of Theorem \ref{t.exist.NS.lin.strong}, formulas
   \eqref{eq.dt},
   \eqref{eq.Bu.zero},
   \eqref{eq.by.parts.1} and
   \eqref{eq.Duu.zero.1}
imply that \eqref{eq.NS.weak} is fulfilled,
i.e., $u$ is a weak solution to equations \eqref{eq.NS}.

By the Sobolev embedding theorem, see \eqref{eq.Sob.index}, the space
   $B^{k,2s,s}_\mathrm{vel} (I) $
is continuously embedded into $L^2 (I,\mathbf{L}^\infty ({\mathbb R}^3))$.
Hence, Theorem \ref{t.NS.unique} shows that if
   $(u',p')$ and
   $(u'',p'')$
belong to
   $B^{k,2s,s}_\mathrm{vel} (I) \times B^{k+1,2(s-1),s-1}_\mathrm{pre} (I)$
and
   $\mathcal{A} (u',p') = \mathcal{A} (u'',p'')$
then
   $u' = u''$ and
   $\nabla (p' - p'') (\cdot,t) = 0$
for all $t \in [0,T]$.
It follows that the difference $p'-p''$ is identically equal to a function $c (t)$ on
the segment $[0,T]$.
Since $p'-p'' \in C (I,\dot{H}^{k})$, we conclude by Proposition \ref{c.Sob.d} that
   $p'-p'' \equiv 0$.
So, the operator $\mathcal{A}$ of \eqref{eq.map.A} is injective.

Finally, equality \eqref{eq.M.diff} makes it evident that the Frech\'et derivative
   $\mathcal{A}'_{(w,p_0)}$
of the nonlinear mapping $\mathcal{A}$ at an arbitrary point
$$
   (w,p_0)
 \in B^{k,2s,s}_\mathrm{vel} (I) \times B^{k+1,2(s-1),s-1}_\mathrm{pre} (I)
$$
coincides with the continuous linear mapping $\mathcal{A}_{w}$ of \eqref{eq.map.Aw}.
By Theorem \ref{t.exist.NS.lin.strong}, $\mathcal{A}_{w}$ is an invertible continuous linear
mapping from
   $B^{k,2s,s}_\mathrm{vel} (I) \times B^{k+1,2(s-1),s-1}_\mathrm{pre} (I)$ to
   $B^{k,2(s-1),s-1}_\mathrm{for} (I) \times V_{k+2s}$.
Both the openness of the mapping $\mathcal{A}$ and the continuity of its local inverse mapping
now follow from the implicit function theorem for Banach spaces,
   see for instance  \cite[Theorem 5.2.3, p.~101]{Ham82}. 
	Actually, we need the following simple 
corollary of this theorem: Let $A: B_1 \to B_2$ be an  everywhere defined 
	smooth (admitting the Fr\'echet derivative at each point) map 
	between Banach spaces $B_1$, $B_2$. If for some point $v_0$ the Fr\'echet 
	derivative $A'_{|v_0}$ is a linear continuously invertible map then we can find 
	a neighbourhood $V$ of $v_0$ and 	a neighbourhood $U$ of $g_0 = A(v_0)$ 
	such that the map $A$ gives a one-to-one map of $V$ onto $U$ and the (local) 
	inverse map $A^{-1}: U \to V$ is continuous and smooth. 
\end{proof}

Thus, everywhere below we identify the Navier-Stokes equations 
with the following operator equation related to the mapping \eqref{eq.map.A}:
given data $(f,u_0) \in B^{k,2(s-1),s-1}_\mathrm{for} (I) \times V_{k+2s}$ find 
a pair $(u,p) \in B^{k,2s,s}_\mathrm{vel} (I) \times B^{k+1,2(s-1),s-1}_\mathrm{pre} (I)$ 
satisfying 
\begin{equation} \label{eq.NS.map}
{\mathcal A} (u,p) = (f,u_0).
\end{equation}
In this way the injectivity of the mapping ${\mathcal A}$ corresponds to the Uniqueness 
Theorem for the Navier-Stokes equations and the surjectivity of the mapping 
${\mathcal A}$ corresponds to the Existence Theorem. 
In particular, solutions to \eqref{eq.NS.map} are the so-called {\it strong} solutions 
to \eqref{eq.NS.weak},  see \cite[Ch. 3, \S 3.6]{Tema79}. 

Next, we set
$
   V_\infty = \mathbf{C}^\infty    \cap \ker (\mathrm{div}).
$
Moreover, given a Fr\'{e}chet space $\mathcal{F}$, we denote by $C^\infty (I,{\mathcal F})$ the
space of all infinitely differentiable functions of $t \in I$ with values in $\mathcal{F}$.

\begin{corollary}
\label{c.open.NS.short}
Equations \eqref{eq.NS} induce an injective continuous nonlinear mapping
\begin{equation}
\label{eq.map.A.Frechet}
   \mathcal{A} :
   C^\infty (I,V_\infty) \times C^\infty (I,\dot{C}^{\infty}   )
\to
   C^\infty (I,\mathbf{C}^{\infty}   ) \times V_\infty
\end{equation}
which is moreover open.
\end{corollary}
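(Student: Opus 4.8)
The plan is to exhibit each of the four Fr\'echet spaces as a countable intersection (projective limit) of the Banach spaces occurring in Theorem~\ref{t.open.NS.short}, and then to descend the Banach open mapping theorem to the limit; the only genuinely analytic ingredient will be an interior regularity statement saying that smooth data force the solution to be smooth. First I would record the identifications
\[
   C^\infty (I,V_\infty) = \bigcap_{k,s} B^{k,2s,s}_\mathrm{vel} (I),
   \qquad
   C^\infty (I,\dot{C}^\infty   ) = \bigcap_{k,s} B^{k+1,2(s-1),s-1}_\mathrm{pre} (I),
\]
\[
   C^\infty (I,\mathbf{C}^\infty   ) = \bigcap_{k,s} B^{k,2(s-1),s-1}_\mathrm{for} (I),
   \qquad
   V_\infty = \bigcap_{k,s} V_{2s+k},
\]
all of which are consequences of the Sobolev embedding \eqref{eq.Sob.indexc}: the intersection of the spaces $\mathbf{H}^m   $ over all $m$ is exactly $\mathbf{C}^\infty   $, and the Bochner--Sobolev definition of the $B$-spaces then forces infinite differentiability in both $x$ and $t$. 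Since the defining norms are directed (monotonically increasing in $k$ and in $s$), single-level balls already form a neighbourhood basis in each Fr\'echet space, and $\mathcal{A}$ carries the Fr\'echet domain into the Fr\'echet target because each of its constituents preserves these intersections, by Lemma~\ref{l.NS.cont}.

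Continuity and injectivity are then soft. Continuity is checked seminorm by seminorm: the projection onto the level-$(k,s)$ factor of the target composed with $\mathcal{A}$ equals the continuous Banach map \eqref{eq.map.A} precomposed with the continuous inclusion of the Fr\'echet domain into $B^{k,2s,s}_\mathrm{vel} (I) \times B^{k+1,2(s-1),s-1}_\mathrm{pre} (I)$, hence is continuous; as the target topology is initial with respect to these projections, $\mathcal{A}$ is continuous. Injectivity is inherited from Theorem~\ref{t.open.NS.short} as the restriction of an injective map, or seen directly from the uniqueness Theorem~\ref{t.NS.unique}.

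The substance is the openness. Fix a smooth point $(w,p_0)$, put $(g,w_0) = \mathcal{A} (w,p_0)$, and let $\mathcal{O}$ be any open set in the Fr\'echet domain containing $(w,p_0)$; choose a single-level ball $\mathcal{U} \subseteq \mathcal{O}$ of radius $\varepsilon$ at some level $(k,s)$. By the Banach openness of Theorem~\ref{t.open.NS.short} at that level, the image of the full Banach ball $\mathcal{U}^{(k,s)}$ is an open neighbourhood of $(g,w_0)$ in $B^{k,2(s-1),s-1}_\mathrm{for} (I) \times V_{2s+k}$, and intersecting it with the Fr\'echet target yields a Fr\'echet neighbourhood $\mathcal{V}'$ of $(g,w_0)$. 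For a smooth datum $(f,u_0) \in \mathcal{V}'$ the Banach homeomorphism of Theorem~\ref{t.open.NS.short} provides a preimage $(u,p) \in \mathcal{U}^{(k,s)}$, unique by global injectivity. It remains to prove that this preimage is in fact smooth, for then $(u,p) \in \mathcal{U}^{(k,s)} \cap (\text{Fr\'echet domain}) = \mathcal{U} \subseteq \mathcal{O}$, whence $(f,u_0) = \mathcal{A} (u,p) \in \mathcal{A} (\mathcal{O})$; this shows $\mathcal{V}' \subseteq \mathcal{A} (\mathcal{O})$ and therefore that $\mathcal{A} (\mathcal{O})$ is open.

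I expect this regularity bootstrap to be the only real obstacle. Applying the Helmholtz projection $\mathbf{P}$ to \eqref{eq.NS} as in \eqref{eq.um.bound.OM.1t} rewrites the problem as the parabolic equation $\partial_t u = \mu \varDelta u + \mathbf{P} (f - \mathbf{D} (u))$ with $u (\cdot,0) = u_0$. The decisive point is that the nonlinearity $\mathbf{D} (u) = (u \cdot \nabla) u$ loses only one spatial derivative relative to $u$---in each product one factor retains the full regularity of $u$, exactly as in the estimates \eqref{eq.B.cont.6}--\eqref{eq.B.cont.7} with $w$ taken to be $u$ itself---whereas the Laplacian supplies two. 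Hence, starting from $u \in C (I,\mathbf{H}^{k+2s}   ) \cap L^2 (I,\mathbf{H}^{k+2s+1}   )$ and $(f,u_0)$ smooth, the energy estimates underlying Lemma~\ref{l.OM.bound} (now applied to the genuine solution rather than to the Galerkin approximations) improve the space-smoothness of $u$ by one order at each step and, iterated, place $u$ in $C (I,\mathbf{H}^m   )$ for every $m$; the corresponding time regularity then follows by differentiating the equation in $t$ and reusing the same bounds, and $p$ is recovered smoothly from $\nabla p = (I - \mathbf{P}) (f - \mathbf{D} (u))$ via Proposition~\ref{c.Sob.d}. This yields $(u,p) \in C^\infty (I,V_\infty) \times C^\infty (I,\dot{C}^\infty   )$ and completes the openness argument; continuity of the local inverse, and hence the full homeomorphism statement, follows from the compatibility of the level-wise inverses guaranteed by injectivity.
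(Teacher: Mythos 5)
Your route is genuinely different from the paper's. The paper makes the same identification of the four Fr\'echet spaces as countable intersections of the Banach scales (via Proposition~\ref{p.Vs} and the Sobolev embedding), but then disposes of the whole statement in one line by asserting that the seminorm systems are \emph{tame} and invoking the Nash--Moser theorem from \cite[Part~III, \S~III.1]{Ham82}. You instead try to descend Theorem~\ref{t.open.NS.short} directly to the projective limit. The soft parts of your argument are all correct: the directedness of the norms so that single-level balls form a neighbourhood basis, continuity checked seminorm by seminorm, injectivity by restriction, and the topological reduction of openness to the implication ``smooth data plus finite-regularity solution implies smooth solution.'' If that bootstrap is established, your argument is complete and arguably more self-contained than the paper's appeal to Nash--Moser (which itself would require verifying tame estimates for the family of inverses $\mathcal{A}_w^{-1}$ that the paper does not spell out).

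The gap is in the bootstrap itself, which you correctly identify as the only real obstacle but then dispatch too quickly. First, Lemma~\ref{l.OM.bound} cannot simply be ``applied to the genuine solution'' with $w$ taken to be $u$ (or $u/2$): inspect the estimate of $\| \nabla^{k'+2} \mathbf{B} (w,u_M) \|^2_{L^2 (I,\mathbf{L}^2)}$ in its proof --- it charges $\| w \|^2_{L^2 (I,\mathbf{H}^{k'+4})}$ and $\| \nabla^{k'+3} w \|^2_{C (I,\mathbf{L}^2)}$, i.e.\ one full order of spatial regularity \emph{more} than the level $k'+3$ being established for $u_M$ at that step of the induction. With $w = u$ the induction therefore does not close, and the self-referential gain of one derivative per step that you assert does not follow from the estimates you cite. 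The correct tool inside the paper is the interpolation bound of Lemma~\ref{l.En.Est.Du.L2}, which absorbs the top-order contribution of $\mathbf{D} u$ into the dissipation $\varepsilon \| \nabla^{k+2} u \|^2_{\mathbf{L}^2}$ at the price of $\| u \|_{\mathbf{L}^{\mathfrak{r}}}^{\mathfrak{s}}$ only --- a quantity already controlled at the base level. Second, even with the right estimate, you cannot legitimately pair the equation with $(- \varDelta)^{(j+2)/2} u$ when $u$ is only known to lie at level $j$; the a priori computation must be run on Galerkin approximations (or mollifications) and the limit identified with $u$ through the uniqueness Theorem~\ref{t.NS.unique} --- exactly the scheme carried out in the closedness part of the proof of Theorem~\ref{t.LsLr}. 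Both defects are repairable with material already in the paper, but as written the decisive step of your proof rests on an estimate that does not do what you need and on an unjustified integration by parts.
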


\begin{proof} It follows immediately from
   the Sobolev embedding theorem   that 
	$$
\begin{array}{rcl}
   V_\infty
 & =
 & \displaystyle
   \cap_{s=0}^\infty V_s,
\\
   C^\infty (I, V_\infty)
 & =
 & \displaystyle
   \cap_{s=1}^\infty B^{0,2s,s}_\mathrm{vel} (I),
\end{array}
   \quad
\begin{array}{rcl}
   C^\infty (I,\dot{C}^{\infty}    )
 & =
 & \displaystyle
   \cap_{s=0}^\infty B^{0,2s,s}_\mathrm{pre} (I),
\\
   C^\infty (I,\mathbf{C}^{\infty}   )
 & =
 & \displaystyle
   \cap_{s=0}^\infty B^{0,2s,s}_\mathrm{for} (I),
\end{array}
$$
see Proposition \ref{p.Vs}. Now, as the corresponding norms define {\it tame} 
systems of seminorms, the statement follows from Nash-Moser Theorem, see 
\cite[Part III, \S III.1, Theorem 1.1.1]{Ham82}. 
\end{proof}

Theorem \ref{t.open.NS.short} and Corollary \ref{c.open.NS.short} suggest a clear direction 
for the development of the topic, in which one takes into account the following property of 
the so-called clopen (closed and open) sets.

\begin{corollary}
\label{c.clopen}
The range of the mapping \eqref{eq.map.A} is closed if and only if it coincides with the whole
destination space. The statement is true for mapping \eqref{eq.map.A.Frechet}, too.
\end{corollary}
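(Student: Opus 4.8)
The plan is to combine the openness of $\mathcal{A}$ established in Theorem \ref{t.open.NS.short} (respectively Corollary \ref{c.open.NS.short}) with the elementary fact that a nonempty subset of a connected topological space which is simultaneously open and closed must coincide with the whole space.

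First I would recall that every topological vector space is connected, since any two of its points $g_0$ and $g_1$ are joined by the continuous segment $t \mapsto (1-t)\, g_0 + t\, g_1$ with $t \in [0,1]$. In particular, the destination spaces $B^{k,2(s-1),s-1}_\mathrm{for} (I) \times V_{2s+k}$ and $C^\infty (I,\mathbf{C}^{\infty}) \times V_\infty$ are connected. Next, writing $R$ for the range of the mapping $\mathcal{A}$ under consideration, I would note that $R$ is open in the destination space: indeed, $\mathcal{A}$ is an open mapping by Theorem \ref{t.open.NS.short} (or Corollary \ref{c.open.NS.short}), and $R$ is the image of its entire, hence open, domain. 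Moreover $R$ is nonempty, for it contains the point $\mathcal{A} (0,0) = (0,0)$.

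With these observations in hand, the two implications follow immediately. If $R$ is closed, then $R$ is a nonempty, simultaneously open and closed subset of a connected space, so $R$ coincides with the whole destination space. Conversely, if $R$ equals the destination space, then it is trivially closed. The very same reasoning, word for word, settles the assertion for the Fr\'echet-space mapping \eqref{eq.map.A.Frechet}, using the openness furnished by Corollary \ref{c.open.NS.short} and the connectedness of the Fr\'echet destination space.

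I do not expect any genuine obstacle in this argument, as all of the analytic difficulty has already been absorbed into the open mapping theorem and what remains is purely topological. The only points requiring a moment's care are the openness of $\mathcal{A}$ on its full domain (ensuring that the range is open), the nonemptiness of the range, and the connectedness of the destination space, each of which is immediate.
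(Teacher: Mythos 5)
Your argument is correct and coincides with the paper's own proof: both rely on the connectedness of the destination space (a convex, hence connected, topological vector space) together with the fact that the range is nonempty and open by Theorem \ref{t.open.NS.short} (resp.\ Corollary \ref{c.open.NS.short}), so that a closed range must be clopen and therefore the whole space. Your version merely spells out explicitly the nonemptiness and openness of the range, which the paper leaves implicit.
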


\begin{proof}
Since the destination space is convex, it is connected. As is known, the only clopen sets in 
a connected topological vector space are the empty set and the space itself. Hence, the range 
of the mapping $\mathcal{A}$ is closed if and only if it coincides with the whole
destination space.
\end{proof}

\section{A surjectivity criterion}
\label{s.surjective}

Inspired by
   \cite{Pro59},
   \cite{Serr62},
   \cite{Lady70} and
   \cite{Lion61,Lion69},
let us obtain a surjectivity criterion  for mapping 
\eqref{eq.map.A} and \eqref{eq.map.A.Frechet} in terms of 
$L^{\mathfrak{s}} (I,\mathbf{L}^{\mathfrak{r}}   )\,$-estimates 
for solutions to \eqref{eq.NS.map} via the data. 
The following theorem hints us that the surjectivity of mapping 
\eqref{eq.map.A}, induced by  \eqref{eq.NS}, is equivalent 
to a property similar to the {\it properness}, see for instance \cite{Sm65}.

\begin{theorem}
\label{t.LsLr}
Let
   $s \in \mathbb{N}$,
   $k \in {\mathbb Z}_+$
and the numbers $\mathfrak{r}$, $\mathfrak{s}$ satisfy \eqref{eq.s.r}. 
Then mapping \eqref{eq.map.A} is surjective 
if and only if, given subset   $S = S_\mathrm{vel} \times S_\mathrm{pre}$ of the product
   $B^{k,2s,s}_\mathrm{vel} (I) \times B^{k+1,2(s-1),s-1}_\mathrm{pre} (I)$ 
such that the image $\mathcal{A} (S)$ is precompact in the space
   $B^{k,2(s-1),s-1}_\mathrm{for} (I) \times V_{2s+k}$,
the set $S_\mathrm{vel}$ is bounded in the space
   $L^{\mathfrak{s}} (I,\mathbf{L}^{\mathfrak{r}}   )$.
\end{theorem}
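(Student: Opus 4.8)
The plan is to prove the two implications separately, reducing the surjectivity of $\mathcal{A}$ to the closedness of its range by means of Corollary \ref{c.clopen}. Throughout I abbreviate the source and destination spaces of \eqref{eq.map.A} by $\mathcal{B}_1 = B^{k,2s,s}_\mathrm{vel} (I) \times B^{k+1,2(s-1),s-1}_\mathrm{pre} (I)$ and $\mathcal{B}_2 = B^{k,2(s-1),s-1}_\mathrm{for} (I) \times V_{2s+k}$. For the \emph{necessity}, suppose $\mathcal{A}$ is surjective. By Theorem \ref{t.open.NS.short} the map $\mathcal{A}$ is continuous, injective and open, so under surjectivity it is a continuous open bijection, hence a homeomorphism, and $\mathcal{A}^{-1} \colon \mathcal{B}_2 \to \mathcal{B}_1$ is continuous and everywhere defined. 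Given $S = S_\mathrm{vel} \times S_\mathrm{pre}$ with $\mathcal{A} (S)$ precompact, the closure $\overline{\mathcal{A} (S)}$ is compact, whence $\mathcal{A}^{-1} (\overline{\mathcal{A} (S)})$ is compact and $S = \mathcal{A}^{-1} (\mathcal{A} (S))$ (by injectivity) is precompact, in particular bounded, in $\mathcal{B}_1$. Thus $S_\mathrm{vel}$ is bounded in $B^{k,2s,s}_\mathrm{vel} (I)$; since $s \geq 1$, the Sobolev embedding theorem \eqref{eq.Sob.index} gives the continuous inclusions $B^{k,2s,s}_\mathrm{vel} (I) \hookrightarrow C (I,\mathbf{L}^\infty) \hookrightarrow L^{\mathfrak{s}} (I,\mathbf{L}^{\mathfrak{r}})$, so $S_\mathrm{vel}$ is bounded in $L^{\mathfrak{s}} (I,\mathbf{L}^{\mathfrak{r}})$, as required.

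For the \emph{sufficiency}, by Corollary \ref{c.clopen} it is enough to show that the range of $\mathcal{A}$ is closed. I would therefore take a sequence $(f_n,u_{0,n}) = \mathcal{A} (u_n,p_n)$ in the range converging to some $(f,u_0) \in \mathcal{B}_2$ and prove that $(f,u_0)$ again lies in the range. Put $K = \{(f_n,u_{0,n})\}_n \cup \{(f,u_0)\}$; as a convergent sequence together with its limit, $K$ is compact, so $S := \mathcal{A}^{-1} (K)$ has precompact image $\mathcal{A} (S) \subseteq K$ and contains every $(u_n,p_n)$. By the standing hypothesis $S_\mathrm{vel}$ is bounded in $L^{\mathfrak{s}} (I,\mathbf{L}^{\mathfrak{r}})$, so $\{u_n\}$ is bounded in $L^{\mathfrak{s}} (I,\mathbf{L}^{\mathfrak{r}})$, while the data $\{(f_n,u_{0,n})\}$, being convergent, are bounded in $\mathcal{B}_2$.

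The heart of the argument, and the step I expect to be the main obstacle, is the a priori estimate upgrading the $L^{\mathfrak{s}} (I,\mathbf{L}^{\mathfrak{r}})$ bound to a bound on $\{(u_n,p_n)\}$ in the strong norm of $\mathcal{B}_1$. Here I would rerun the energy scheme of Lemma \ref{l.OM.bound}, but for the \emph{nonlinear} equation, that is with $w$ replaced by the solution $u_n$ itself: testing the differentiated equations against $(-\varDelta)^{r} u_n$ and integrating by parts as in \eqref{eq.um.bound.OM.r}, the only troublesome term is the trilinear one arising from $\mathbf{D} (u_n)$. I would bound it via the integral H\"older inequality \eqref{eq.Hoelder} and the Gagliardo-Nirenberg inequality \eqref{eq.L-G-N}, distributing derivatives so that the factor measuring the nonlinearity is exactly $\| u_n \|_{\mathbf{L}^{\mathfrak{r}}}$. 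The Ladyzhenskaya-Prodi-Serrin balance \eqref{eq.s.r}, namely $2/\mathfrak{s} + 3/\mathfrak{r} = 1$, is precisely what makes the resulting time weight integrable with exponent $\mathfrak{s}$, so that Gronwall's Lemma \ref{l.Perov} closes the estimate with controlling quantity $\int_0^T \| u_n (\cdot,t) \|^{\mathfrak{s}}_{\mathbf{L}^{\mathfrak{r}}}\, dt$. Since this quantity is uniformly bounded, an induction on the order of derivatives exactly parallel to Lemma \ref{l.OM.bound} yields a uniform bound for $\{u_n\}$ in $B^{k,2s,s}_\mathrm{vel} (I)$, after which the pressure recovery $\nabla p_n = (I - \mathbf{P})(f_n - \mathbf{D} (u_n))$, together with Proposition \ref{c.Sob.d} and Lemma \ref{l.NS.cont}, bounds $\{p_n\}$ in $B^{k+1,2(s-1),s-1}_\mathrm{pre} (I)$.

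Finally, from these uniform bounds I would extract a subsequence of $\{(u_n,p_n)\}$ converging weakly-$^\ast$ in the spaces $C (I,\cdot)$ and weakly in the spaces $L^2 (I,\cdot)$ of the relevant Sobolev scales, and, by the compact Bochner-Sobolev (Aubin-Lions type) embeddings on the torus, strongly in $L^2 (I,\mathbf{L}^2)$; the last convergence suffices to pass to the limit in the nonlinear term $\mathbf{D} (u_n) \to \mathbf{D} (u)$. The limit $(u,p)$ then belongs to $\mathcal{B}_1$ and satisfies $\mathcal{A} (u,p) = (f,u_0)$, the initial condition surviving because $u_n (\cdot,0) = u_{0,n} \to u_0$ and evaluation at $t=0$ is continuous on $C (I,\cdot)$. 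Hence $(f,u_0)$ lies in the range, the range is closed, and Corollary \ref{c.clopen} yields the surjectivity of $\mathcal{A}$, completing the equivalence.
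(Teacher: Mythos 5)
Your proposal is correct and follows essentially the same route as the paper: necessity via the continuity of the global inverse furnished by the open mapping theorem (Theorem \ref{t.open.NS.short}) together with the embedding $B^{k,2s,s}_\mathrm{vel}(I) \hookrightarrow L^{\mathfrak{s}}(I,\mathbf{L}^{\mathfrak{r}})$, and sufficiency by reducing to closedness of the range (Corollary \ref{c.clopen}) and upgrading the $L^{\mathfrak{s}}(I,\mathbf{L}^{\mathfrak{r}})$ bound to a $B^{k,2s,s}_\mathrm{vel}(I)$ bound through the Gagliardo--Nirenberg/Gronwall scheme keyed to the Ladyzhenskaya--Prodi--Serrin balance, which is exactly the content of Lemmata \ref{l.En.Est.Du.L2}--\ref{c.En.Est.g.ks}, followed by weak compactness, uniqueness, and pressure recovery. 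The only cosmetic differences are that the paper argues necessity by contradiction and carries out your sketched a priori estimate and limit passage in full detail.
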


\begin{proof} Let mapping \eqref{eq.map.A} be surjective. Then the range of this mapping 
is closed according to Theorem \ref{t.open.NS.short}. Fix a 
subset   $S = S_\mathrm{vel} \times S_\mathrm{pre}$ of the product
   $B^{k,2s,s}_\mathrm{vel} (I) \times B^{k+1,2(s-1),s-1}_\mathrm{pre} (I)$ 
such that the image $\mathcal{A} (S)$ is precompact in the space
   $B^{k,2(s-1),s-1}_\mathrm{for} (I) \times V_{2s+k}$. 
If the set $S_\mathrm{vel}$ is unbounded in the space
   $L^{\mathfrak{s}} (I,\mathbf{L}^{\mathfrak{r}}   )$ then 
	there is a sequence $\{ (u_k,p_k) \} \subset S$ such that
\begin{equation}
\label{eq.unbounded}
   \lim_{k \to \infty} \| u_k \|_{L^{\mathfrak{s}} 
(I,\mathbf{L}^{\mathfrak{r}}   )}
 = \infty.
\end{equation}
As the set $\mathcal{A} (S)$ is precompact in
   $B^{k,2(s-1),s-1}_\mathrm{for} (I) \times V_{2s+k}$,
we conclude that the corresponding sequence of data
   $\{ \mathcal{A} (u_k,p_k) = (f_k,u_{k,0})\}$
contains a subsequence $\{ (f_{k_m},u_{k_m,0})\}$ which converges to an element
   $(f,u_0) $ in this space. But the range of the map is closed and hence 
for the data $(f,u_0)$ there is a unique solution $(u,p)$ to 
\eqref{eq.NS.map} 
in the space
   $B^{k,2s,s}_\mathrm{vel} (I) \times B^{k+1,2(s-1),s-1}_\mathrm{pre} (I)$
and the sequence $\{ (u_{k_m},p_{k_m}) \}$ converges to $(u,p) $ in this space.
Therefore, $\{ (u_{k_m},p_{k_m}) \}$ is bounded in
   $B^{k,2s,s}_\mathrm{vel} (I) \times B^{k+1,2(s-1),s-1}_\mathrm{pre} (I)$
and this contradicts \eqref{eq.unbounded} because the space $B^{k,2s,s}_\mathrm{vel} (I)$ is
embedded continuously into the space $L^{\mathfrak{s}} 
(I,\mathbf{L}^{\mathfrak{r}}   )$ for any pair $\mathfrak{r}$, $\mathfrak{s}$ satisfying 
\eqref{eq.s.r}.

We continue with typical estimates for 
solutions to the Navier-Stokes equations \eqref{eq.NS.map}.
We emphasize again that the elements of the spaces in 
the consideration are already sufficiently regular. So, we need the estimates for proving 
the surjectivety of mapping \eqref{eq.map.A} but not for improving 
regularity of weak (Leray-Hopf) solutions.  

\begin{lemma}
\label{p.En.Est.u.strong}
If
   $(u,p) \in B^{0,2,1}_{\mathrm{vel}} (I) \times B^{1,0,0}_{\mathrm{pre}} (I)$
is a solution to the Navier-Stokes equations \eqref{eq.NS.map} with data
   $(f,u_0) \in B^{0,0,0}_{\mathrm{for}} (I) \times V_2$,
then
\begin{equation}
\label{eq.En.Est2imp}
   \| u \|_{0,\mu,T}
 \leq
   \| (f,u_0) \|_{0,\mu,T}
\end{equation}
and estimate \eqref{eq.En.Est2add} holds true for $u$.
\end{lemma}

\begin{proof}
For a solution  $(u,p)$ to \eqref{eq.NS.map} related to 
data $(f,u_0)$ within the declared function classes, the component   
$u$ belongs to $C (I, H^{2}) \cap L^2 (I, H^{3})$, and both $\partial_t u $ and $f$  
belong to $C (I, L^{2}) \cap L^2 (I, H^{1})$. Then  
we may calculate the inner product $({\mathcal A}u, u)_{\mathbf{L}^2}$
with the use of of \eqref{eq.Bu.zero} and Lemma \ref{l.Lions}, obtaining 
\begin{equation*}
 \frac{d}{dt} \|u\|^2_{\mathbf{L}^2}
 + \mu \| \nabla u\|^2_{\mathbf{L}^2}
  = \langle f , u \rangle,
\end{equation*}
Finally, applying Lemma \ref{p.En.Est.u}  with $w = 0$, we conclude 
the estimates of \eqref{eq.En.Est2imp} and \eqref{eq.En.Est2add} follow.
\end{proof}

Next, we obtain some estimates for the derivatives vector fields with respect to the space variables.

\begin{lemma}
\label{l.En.Est.Du.L2}
Let
   $k \in \mathbb{Z}_+$
and
   $\mathfrak{s}$,
   $\mathfrak{r}$
satisfy \eqref{eq.s.r}.
Then for any
   $\varepsilon > 0$
and for all
   $u \in \mathbf{H}^{2+k} $
it follows that
\begin{eqnarray}
\label{eq.En.Est3bbb}
\lefteqn{
   \| (- \varDelta)^{\frac{k}{2}} \mathbf{D} u \|^2_{\mathbf{L}^2   }
 \, \leq \,
   \varepsilon\, \| \nabla^{k+2} u \|^2_{\mathbf{L}^2    }
}
\nonumber
\\
 & + &
   c (k,\mathfrak{s},\mathfrak{r},\varepsilon)\, \| u \|^{\mathfrak{s}}_{\mathbf{L}^\mathfrak{r}   }
                                                 \| \nabla^{k+1} u \|^2_{\mathbf{L}^2   }
 + c (k,\mathfrak{s},\mathfrak{r})\, \| u \|^{2}_{\mathbf{L}^2   }
                                     \| u \|^{2}_{\mathbf{L}^\mathfrak{r}   }
 + c (k,\mathfrak{s},\mathfrak{r})\, \| u \|^{2}_{\mathbf{L}^2   }
\nonumber
\\
\end{eqnarray}
with positive constants depending on the parameters in parentheses and not necessarily the same in
diverse applications, the constants being independent of $u$.
\end{lemma}

\begin{proof}
On using
   the Leibniz rule,
   the H\"older inequality and
   Remark \ref{r.Delta.r}
we deduce that
\begin{equation}
\label{eq.En.Est3bu}
   \| (-\varDelta)^{\frac{k}{2}} \mathbf{D}  u \|^2_{\mathbf{L}^\mathfrak{r}   }
 \leq
   \sum_{j=0}^{k}
   C^k_j\,
   \| \nabla^{k+1-j} u \|^2_{\mathbf{L}^{\frac{2q}{q-1}}   }
   \| \nabla^{j} u \|^2_{\mathbf{L}^{2 q}    }
\end{equation}
with binomial type coefficients $C_k^j$ and any $q \in (1,\infty)$.

For $k = 0$ there are no other summands than that with $j=0$.
But for $k \geq 1$ we have to consider the items corresponding to $1 \leq j \leq k$, too.
The standard interpolation inequalities on compact manifolds
   (see for instance \cite[Theorem 2.2.1]{Ham82})
hint us that those summands which correspond to $1 \leq j \leq k$ could actually be estimated by
the item with $j=0$.
We realize this as follows:
For any $j$ satisfying $1 \leq j \leq k$ there are numbers
   $q > 1$ and
   $c > 0$
depending on $k$ and $j$ but not on $u$, such that
\begin{equation}
\label{eq.En.Est3y}
   \| \nabla^{k+1-j} u \|_{\mathbf{L}^{\frac{2 q}{q-1}}   }
   \| \nabla^{j} u \|_{\mathbf{L}^{2 q}   }
 \leq
   c \Big( \| \nabla^{k+1} u \|_{\mathbf{L}^{\frac{2 \mathfrak{r}}{\mathfrak{r}-2}}   }
              \| u \|_{\mathbf{L}^{\mathfrak{r}}   }
         + \| u \|_{\mathbf{L}^{2}   }
     \Big).
\end{equation}
Indeed, we may apply Gagliardo-Nirenberg inequality \eqref{eq.L-G-N} if we prove that for each
$1 \leq j \leq k$ there is a $q > 1$ depending on $k$ and $j$, such that the system of algebraic
equations
\begin{equation*}
\left\{
\begin{array}{rcl}
   \displaystyle
   \frac{1}{2q}
 & =
 & \displaystyle
   \frac{j}{3}
 + \Big( \frac{\mathfrak{r}-2}{2 \mathfrak{r}}-\frac{k+1}{3} \Big)\vartheta_1
 + \frac{1-\vartheta_1}{\mathfrak{r}},
\\
   \displaystyle
   \frac{q-1}{2q}
 & =
 & \displaystyle
   \frac{k+1-j}{3}
 + \Big( \frac{\mathfrak{r}-2}{2 \mathfrak{r}} -\frac{k+1}{3} \Big) \vartheta_2
 + \frac{1-\vartheta_2}{\mathfrak{r}}
\end{array}
\right.
\end{equation*}
admits solutions
$$
\begin{array}{rcl}
   \vartheta_1
 & \in
 & \displaystyle
   [\frac{j}{k+1},1),
\\
   \vartheta_2
 & \in
 & \displaystyle
   [\frac{k+1-j}{k+1},1).
\end{array}
$$
On adding these equations we see that
$$
   \frac{1}{2} - \frac{k+1}{3} - \frac{2}{\mathfrak{r}}
 = \Big( \frac{1}{2} - \frac{k+1}{3} - \frac{2}{\mathfrak{r}} \Big) (\vartheta_1 + \vartheta_2),
$$
i.e., the system is reduced to
$$
\left\{
\begin{array}{rcl}
   \vartheta_1 q (2 (k+1) \mathfrak{r }+ 12 - 3 \mathfrak{r})
 & =
 & 2 j\, \mathfrak{r} q + 6 q - 3 \mathfrak{r},
\\
   \vartheta_1 + \vartheta_2
 & =
 & 1.
\end{array}
\right.
$$
Choose
$
   \displaystyle
   \vartheta_1 = \frac{j}{k+1}
$
and
$
   \displaystyle
   \vartheta_2 = \frac{k+1-j}{k+1}
$
to obtain
$$
   q
 = q (k,j)
 = \frac{(k+1) \mathfrak{r}}{2 (k+1) + j (\mathfrak{r} -4)}.
$$
Since
   $\mathfrak{r} > n \geq 2$ and
   $1 \leq j \leq k$,
an easy calculation shows that
$$
\begin{array}{rcccccc}
   2 (k+1) + j (\mathfrak{r} - 4)
 & >
 & 2 (k+1) - 2j
 & \geq
 & 2
 & >
 & 0,
\\\
   (k+1) \mathfrak{r} - (2 (k+1) + j (\mathfrak{r} -4))
 & =
 & (k+1) (\mathfrak{r}-2) - j (\mathfrak{r} - 4)
 & >
 & 0,
 &
 &
\end{array}
$$
i.e., $q (k,j)> 1$ in this case, and so \eqref{eq.En.Est3y} holds true.

Therefore, if we choose $q (k,0) = \mathfrak{r} / 2 > 1$, the estimates of
   \eqref{eq.En.Est3bu} and
   \eqref{eq.En.Est3y}
readily yield
\begin{equation}
\label{eq.En.Est4b}
   \| (- \varDelta)^{\frac{k}{2}} \mathbf{D}  u \|^2_{\mathbf{L}^2   }
 \leq
   c (k,\mathfrak{r})
   \Big( \| \nabla^{k+1} u \|^2_{\mathbf{L}^{\frac{2 \mathfrak{r}}{\mathfrak{r}-2}}   }
         \| u \|^2_{\mathbf{L}^{\mathfrak{r}}   }
       + \| u \|^2_{\mathbf{L}^{2}   }
   \Big)
\end{equation}
with a constant $c (k,\mathfrak{r})$ independent on $u$.

Now, if
   $\mathfrak{s} = 2$ and
   $\mathfrak{r} = +\infty$,
then, obviously, we get
\begin{equation}
\label{eq.En.Est3uuu}
   c (k,\mathfrak{r})\,
   \| \nabla^{k+1} u \|^2_{\mathbf{L}^{\frac{2 \mathfrak{r}}{\mathfrak{r}-2}}   }
   \| u \|^2_{\mathbf{L}^{\mathfrak{r}}   }
 = c (k,\mathfrak{r})\,
   \| \nabla^{k+1} u \|^2_{\mathbf{L}^{2}   }
   \| u \|^2_{\mathbf{L}^{\infty}   }.
\end{equation}
If
   $\mathfrak{s} > 2$ and
   $3 < \mathfrak{r} < \infty$,
then we may again apply Gagliardo-Nirenberg inequality \eqref{eq.L-G-N} with
   $j_0 = 0$,
   $k_0 = 1$,
   $q_0 = r_0 = 2$,
   $0 < a = 3 / \mathfrak{r} < 1$
and
   $p_0 = 2 \mathfrak{r}/(\mathfrak{r}-2)$
to achieve
\begin{equation}
\label{eq.L-G-N.gamma}
   \| \nabla^{k+1} u \|_{\mathbf{L}^{\frac{2 \mathfrak{r}}{\mathfrak{r}-2}}   }
   \| u \|_{\mathbf{L}^{\mathfrak{r}}   }
 \leq
   c (\mathfrak{r})
   \Big( \| \nabla^{k+2} u \|^{\frac{3}{\mathfrak{r}}}_{\mathbf{L}^2   }\,
   \| \nabla^{k+1} u \|^{\frac{\mathfrak{r}-3}{\mathfrak{r}}}_{\mathbf{L}^2   }
 + \| u \|_{\mathbf{L}^{2}   }
   \Big)
   \| u \|_{\mathbf{L}^{\mathfrak{r}}   }
\end{equation}
with an appropriate Gagliardo-Nirenberg constant $c (\mathfrak{r})$ independent of $u$.

Since
$
   \displaystyle
   \mathfrak{s} = \frac{2 \mathfrak{r}}{\mathfrak{r}-3},
$
it follows from \eqref{eq.L-G-N.gamma} that
\begin{eqnarray}
\label{eq.En.Est3bbbb}
\lefteqn {
   c (k,\mathfrak{r})
   \| \nabla^{k+1} u \|^2_{\mathbf{L}^{\frac{2 \mathfrak{r}}{\mathfrak{r}-2}}   }
   \| u \|^2_{\mathbf{L}^{\mathfrak{r}}   }
}
\nonumber
\\
 & \leq &
   2 c (k,\mathfrak{r})
   \Big( \| \nabla^{k+2} u \|^{\frac{6}{\mathfrak{r}}}_{\mathbf{L}^2   }\,
         \| \nabla^{k+1} u \|^{\frac{2 (\mathfrak{r}-3)}{\mathfrak{r}}}_{\mathbf{L}^2   }\,
         \| u \|^2_{\mathbf{L}^{\mathfrak{r}}   }
       + \| u \|^2_{\mathbf{L}^{2}   }
         \| u \|^2_{\mathbf{L}^{\mathfrak{r}}    }
   \Big)
\nonumber
\\
 & \leq &
   \varepsilon\, \| \nabla^{k+2} u \|^2_{\mathbf{L}^2   }
 + \frac{c (k,\mathfrak{r})}{\varepsilon} \| \nabla^{k+1} u \|^{2}_{\mathbf{L}^2   }\,
                                          \| u \|^\mathfrak{s}_{\mathbf{L}^{\mathfrak{r}}    }
 + 2 c (k,\mathfrak{r})\, \| u \|^2_{\mathbf{L}^{2}   }
                          \| u \|^{2}_{\mathbf{L}^{\mathfrak{r}}    }
\nonumber
\\
\end{eqnarray}
with some positive constants independent of $u$ because of Young's inequality \eqref{eq.Young} applied
with
   $p_1 = \mathfrak{r} / 3$  and
   $p_2 = \mathfrak{r} / (\mathfrak{r}-3)$.

Now, inequalities
   \eqref{eq.En.Est4b},
   \eqref{eq.En.Est3uuu} and
   \eqref{eq.En.Est3bbbb}
imply \eqref{eq.En.Est3bbb} for all
   $3 < \mathfrak{r} \leq \infty$ and
   $2 \leq \mathfrak{s} = 2 \mathfrak{r} / (\mathfrak{r}-3) < \infty$,
as desired.
\end{proof}

We now introduce
\begin{equation*}
   \| (f,u_0) \|_{k,\mu,T}
 = \Big( \| \nabla^k u_0 \|^2_{\mathbf{L}^2   }
       + 4 \mu ^{-1} \| \nabla^{k-1} f \|^2_{L^2 (I, \mathbf{L}^2   )} \Big)^{1/2}
\end{equation*}
for $k \geq 1$.

\begin{lemma}
\label{t.En.Est.g.2}
Let
   $k \in \mathbb{Z}_+$ and the pair 
   $\mathfrak{s}$, $\mathfrak{r}$ satisfy \eqref{eq.s.r}.
If
   $(u,p) \in B^{k,2,1}_\mathrm{vel} (I) \times B^{k+1,0,0}_\mathrm{pre} (I)$
is a solution to the Navier-Stokes equations \eqref{eq.NS.map}  
corresponding to data
   $(f,u_0)$ in $B^{k,0,0}_\mathrm{for} (I) \times V_{k+2}$
then
\begin{eqnarray}
\label{eq.En.Est4}
   \| u \|_{j+1,\mu,T}
 & \leq &
   c_j ( (f,u_0), u),
\nonumber
\\
   \| \nabla^{j} \mathbf{D} u \|_{L^2 (I, \mathbf{L}^2   )}
 & \leq &
   c_j ( (f,u_0), u),
\nonumber
\\
   \| \nabla^{j} \partial_t u \|^2_{L^2 (I, \mathbf{L}^2   )}
 + \| \nabla^{j+1} p \|^2_{L^2 (I, \mathbf{L}^2   )}
 & \leq &
   c_j ( (f,u_0), u),
\nonumber
\\
\end{eqnarray}
for all $0 \leq j \leq k+1$, where the constants on the right-hand side depend on the norms 
   $\| (f,u_0) \|_{0,\mu,T}$,
   $\| (f,u_0) \|_{j+1,\mu,T}$ and
   $\| u \|_{L^\mathfrak{s} (I, \mathbf{L}^\mathfrak{r}   )}$
and need not be the same in diverse applications.
\end{lemma}

It is worth pointing out that the constants on the right-hand side of \eqref{eq.En.Est4} may  
also depend on $\mathfrak{s}$, $\mathfrak{r}$, $T$, $\mu$, etc., but we do not display this 
dependence in notation.

\begin{proof}
We first recall that
   $u \in C (I, \mathbf{H}^{k+2}   ) \cap L^2 (I, \mathbf{H}^{k+3}   )$,
   $u_0 \in \mathbf{H}^{k+2}   $
and
   $\nabla p, f \in C (I, \mathbf{H}^k   ) \cap L^2 (I, \mathbf{H}^{k+1}   )$
under the hypotheses of the lemma. 
Next, we see that in the sense of distributions we have
\begin{equation}
\label{eq.du.solution}
\left\{
\begin{array}{rclcl}
   (- \varDelta)^{\frac{j}{2}}
   (\partial_t - \mu \varDelta) u + \mathbf{D} u + \nabla p)
 & =
 & (- \varDelta)^{\frac{j}{2}} f
 & \mbox{in}
 & \mathbb{R}^3 \times (0,T),
\\
   (- \varDelta)^{\frac{j}{2}} u (x,0)
 & =
 & (- \varDelta)^{\frac{j}{2}} u_0 (x)
 & \mbox{for}
 & x \in \mathbb{R}^3
\end{array}
\right.
\end{equation}
 for all $0 \leq j \leq k+1$, if $(u,p)$ 
is a solution to \eqref{eq.NS.map}.

Integration by parts and Remark \ref{r.Delta.r} yield
\begin{equation}
\label{eq.by.parts.3}
   ( (- \varDelta)^{\frac{j}{2}} u, (- \varDelta)^{\frac{j+2}{2}} u)_{\mathbf{L}^2   }
 = \| (- \varDelta)^{\frac{j+1}{2}} u \|^2_{\mathbf{L}^2   }
 = \| \nabla^{j+1} u \|^2_{\mathbf{L}^2   }
\end{equation}
and similarly
\begin{equation}
\label{eq.dt.k}
   2\, (\partial_t (- \varDelta)^{\frac{j}{2}} u, (- \varDelta)^{\frac{j+2}{2}} u)_{\mathbf{L}^2   }
 = \frac{d}{dt}\, \| \nabla^{j+1} u \|^2_{\mathbf{L}^2   },
\end{equation}
cf. \eqref{eq.dt}.
Furthermore, as
   $\mathrm{rot}\, \nabla u = 0$ and
   $\mathrm{div}\, u =0$
in ${\mathbb R}^3 \times [0,T]$, we conclude that
\begin{eqnarray}
\label{eq.by.parts.3p}
\lefteqn{
   ((- \varDelta)^{\frac{j}{2}} \nabla p (\cdot,t), (- \varDelta)^{\frac{j+2}{2}} u (\cdot,t))_{\mathbf{L}^2   }
}
\nonumber
\\
 & = &
   \lim_{i \to \infty}
   ((- \varDelta)^{\frac{j}{2}} \nabla p_i (\cdot,t),
    (\mathrm{rot})^\ast \mathrm{rot}\, (- \varDelta)^{\frac{j}{2}} u (\cdot,t))_{\mathbf{L}^2   }
\nonumber
\\
 & = &
   \lim_{i \to \infty}
   ((- \varDelta)^{\frac{j}{2}} \mathrm{rot}\, \nabla p_i (\cdot,t),
    \mathrm{rot}\, (- \varDelta)^{\frac{j}{2}} u (\cdot,t))_{\mathbf{L}^2   }
\nonumber
\\
 & = &
   0
\nonumber
\\
\end{eqnarray}
for all $t \in [0,T]$, where
   $p_i (\cdot,t) \in H^{j+2} $
is any sequence approximating $p (\cdot, t)$ in $H^{j+1} $.

On combining
   \eqref{eq.du.solution},
   \eqref{eq.by.parts.3},
   \eqref{eq.dt.k} and
   \eqref{eq.by.parts.3p}
we get
\begin{eqnarray}
\label{eq.by.parts.22}
\lefteqn{
   2\,
   ( (- \varDelta)^{\frac{j}{2}} (\partial_t - \mu \varDelta) u + \mathbf{D} u + \nabla p) (\cdot,t),
     (- \varDelta)^{\frac{j+2}{2}} u (\cdot, t)
   )_{\mathbf{L}^2   }
}
\nonumber
\\
 \!\! & \!\! = \!\! & \!\!
   \frac{d}{dt} \| \nabla^{j\!+\!1} u (\cdot,t) \|^2_{\mathbf{L}^2   } \!\!
 + 2 \mu \| \nabla^{j\!+\!2} u (\cdot,t) \|^2_{\mathbf{L}^2   } \!\!
 + 2 ( (- \varDelta)^{\frac{j}{2}} \mathbf{D} u (\cdot,t), (- \varDelta)^{\frac{j\!+\!2}{2}} u (\cdot, t)
     )_{\mathbf{L}^2   } \!\!
\nonumber
\\
\end{eqnarray}
for all $0 \leq j \leq k+1$.
Next, according to the H\"older inequality, we get
\begin{equation}
\label{eq.En.Est3b}
   2 |( (- \varDelta)^{\frac{j}{2}} \mathbf{D}  u, (- \varDelta)^{\frac{j+2}{2}} u )_{\mathbf{L}^2   }|
 \leq
   \frac{2}{\mu}\, \| (- \varDelta)^{\frac{j}{2}} \mathbf{D} u \|^2_{\mathbf{L}^2   }
 + \frac{\mu}{2}\, \| (- \varDelta)^{\frac{j+2}{2}} u (\cdot,t) \|_{\mathbf{L}^2   },
\end{equation}
and so
\begin{eqnarray}
\label{eq.En.Est3a}
\lefteqn{
   2\, ( (- \varDelta)^{\frac{j}{2}} f (\cdot,t), (- \varDelta)^{\frac{j+2}{2}} u (\cdot,t)
       )_{\mathbf{L}^{2}   }
}
\nonumber
\\
 & \leq &
   2\,  \| (- \varDelta)^{\frac{j}{2}} f (\cdot,t) \|_{\mathbf{L}^{2}   }
        \| (- \varDelta)^{\frac{j+2}{2}} u (\cdot,t) \|_{\mathbf{L}^{2}   }
\nonumber
\\
 & \leq &
   \frac{4}{\mu}\, \| (- \varDelta)^{\frac{j}{2}} f (\cdot,t) \|^2_{\mathbf{L}^{2}   }
 + \frac{\mu}{4}\, \| (- \varDelta)^{\frac{j+2}{2}} u (\cdot,t) \|^2_{\mathbf{L}^{2}   }
\nonumber
\\
\end{eqnarray}
for all $t \in [0,T]$.
By the H\"older inequality with
$
   \displaystyle
   q_1 = \frac{\mathfrak{r}}{3}
$
and
$
   \displaystyle
   q_2 = \frac{\mathfrak{r}}{\mathfrak{r}-3},
$
\begin{equation}
\label{eq.En.Est3x00}
   \int_0^t
   \| u (\cdot,s) \|^{2}_{\mathbf{L}^{2}   }
   \| u (\cdot,s) \|^{2}_{\mathbf{L}^{\mathfrak{r}}   }
   ds
 \leq
   \| u \|^{2}_{L^{\frac{2}{3} \mathfrak{r}} ([0,t], \mathbf{L}^2   )}
   \| u \|^{2}_{L^\mathfrak{s} ([0,t], \mathbf{L}^{\mathfrak{r}}   )}.
\end{equation}
On summarising inequalities
   \eqref{eq.du.solution},
   \eqref{eq.by.parts.22},
   \eqref{eq.En.Est3b},
   \eqref{eq.En.Est3bbb},
   \eqref{eq.En.Est3x00} and
   \eqref{eq.En.Est3a}
we immediately obtain
\begin{eqnarray}
\label{eq.En.Est3x0}
\lefteqn{
   \| \nabla^{j+1} u (\cdot,t) \|^2_{\mathbf{L}^2   }
 + \mu \int_0^{t} \| \nabla^{j+2} u (\cdot,s) \|^2_{\mathbf{L}^{2}   } ds
}
\nonumber
\\
 & \leq &
   \| \nabla^{j+1} u_0 \|^2_{\mathbf{L}^{2}   }
 + \frac{4}{\mu} \| \nabla^{j} f \|^2_{L^2 (I,\mathbf{L}^2   )}
 + c (j, \mathfrak{s}, \mathfrak{r})
   \| u \|^{2}_{L^{\frac{2 \mathfrak{r}}{n}} ([0,t], \mathbf{L}^2   )}
   \| u \|^{2}_{L^\mathfrak{s} ([0,t],\mathbf{L}^{\mathfrak{r}}   )}
\nonumber
\\
 & + &
   c (j, \mathfrak{s}, \mathfrak{r})
   \frac{1}{\mu}
   \int_0^t \| u (\cdot,s) \|^{\mathfrak{s}}_{\mathbf{L}^{\mathfrak{r}}   }
            \| \nabla^{j+1} u (\cdot,s) \|^2_{\mathbf{L}^2   } ds
 + c (j, \mathfrak{s}, \mathfrak{r})
   \| u \|^{2}_{\mathbf{L}^{2}   }
\nonumber
\\
\end{eqnarray}
for all $t \in [0,T]$.
It is worth to be mentioned that the constants need not be the same in diverse applications.
By
   \eqref{eq.En.Est2},
   \eqref{eq.En.Est2add}
and
   \eqref{eq.En.Est3x0},
given any $0 \leq j \leq k+1$, we get an estimate
\begin{eqnarray}
\label{eq.En.Est3x}
\lefteqn{
   \| \nabla^{j+1} u (\cdot,t) \|^2_{\mathbf{L}^2   }
 + \mu \int_0^{t} \| \nabla^{j+2} u (\cdot,s) \|^2_{\mathbf{L}^{2}   } ds
}
\nonumber
\\
 & \leq &
   \| (f,u_0) \|^2_{j+1,\mu,T}
 + c (j, \mathfrak{s}, \mathfrak{r}) T^{\frac{3}{\mathfrak{r}}}
   \| (f,u_0) \|^2_{0,\mu,T}
   \| u \|^{2}_{L^\mathfrak{s} ([0,t],\mathbf{L}^{\mathfrak{r}}   )}
\nonumber
\\
 & + &
   c (j, \mathfrak{s}, \mathfrak{r}) \frac{1}{\mu}
   \int_0^t \| u (\cdot,s) \|^{\mathfrak{s}}_{\mathbf{L}^{\mathfrak{r}}   }
            \| \nabla^{j+1} u (\cdot,s) \|^2_{\mathbf{L}^2    } ds
 + c (j, \mathfrak{s}, \mathfrak{r}) T\, \| (f,u_0) \|^2_{0,\mu,T}
\nonumber
\\
\end{eqnarray}
for all $t \in I$.

On applying Gronwall's Lemma \ref{l.Perov}
to \eqref{eq.En.Est3x} with
\begin{eqnarray*}
   A (t)
 \! & \! = \! & \!
   \| (f,u_0) \|^2_{j+1,\mu,T}
 + \left( c (j, \mathfrak{s}, \mathfrak{r}) T^{\frac{3}{\mathfrak{r}}}
          \| u \|^{2}_{L^\mathfrak{s} ([0,t],\mathbf{L}^{\mathfrak{r}}   )}
        + c (j, \mathfrak{s}, \mathfrak{r}) T
   \right)
   \| (f,u_0) \|^2_{0,\mu,T},
\\
   Y (t)
 \! & \! = \! & \!
   \| \nabla^{j+1} u (\cdot,t) \|^2_{\mathbf{L}^2   },
\\
   B (t)
 \! & \! = \! & \!
   c (j, \mathfrak{s}, \mathfrak{r}) \frac{1}{\mu}
   \| u (\cdot,t) \|^{\mathfrak{s}}_{\mathbf{L}^{\mathfrak{r}}   }
\end{eqnarray*}
we conclude that,
   for all $t \in [0,T]$ and $0 \leq j \leq k+1$,
\begin{equation}
\label{eq.d.sup}
   \| \nabla^{j+1} u (\cdot,t) \|^2_{\mathbf{L}^2   }
 \leq
   c (j, \mathfrak{s}, \mathfrak{r}, T, \mu, (f,u_0))
   \exp
   \Big(
   c (j, \mathfrak{s}, \mathfrak{r}) \frac{1}{\mu}
   \int_0^t
   \| u (\cdot,s) \|^{\mathfrak{s}}_{\mathbf{L}^{\mathfrak{r}}   }
   ds
   \Big)
\end{equation}
with a positive constant $c (j, \mathfrak{s}, \mathfrak{r}, T, \mu, (f,u_0))$ independent of $u$.
Obviously, \eqref{eq.En.Est3x} and \eqref{eq.d.sup} imply the first estimate of \eqref{eq.En.Est4}.

Next, applying \eqref{eq.En.Est3bbb} and \eqref{eq.En.Est3x00} we see that
\begin{eqnarray*}
\lefteqn{
   \| (- \varDelta)^{\frac{j}{2}} \mathbf{D} u \|^2_{L^2 ([0,t], \mathbf{L}^2   )}
}
\\
 & \leq &
   \| \nabla^{j+2} u \|^2_{L^2 ([0,t], \mathbf{L}^2   )}
 + c (j, \mathfrak{s}, \mathfrak{r},\varepsilon\!=\!1)\,
   \| u \|^{\mathfrak{s}}_{L^\mathfrak{s} ([0,t],\mathbf{L}^{\mathfrak{r}}   )}
   \| \nabla^{j+1} u \|^2_{C ([0,t], \mathbf{L}^2   )}
\\
 & + &
   2 c (j, \mathfrak{r})\,
   \| u \|^{2}_{L^{\frac{2 \mathfrak{r}}{3}} ([0,t], \mathbf{L}^2   )}
   \| u \|^{2}_{L^\mathfrak{s} ([0,t], \mathbf{L}^{\mathfrak{r}}   )}
 + 2 c (j, \mathfrak{r})\, \| u \|^{2}_{L^{2} ([0,t], \mathbf{L}^2   )},
\end{eqnarray*}
the constants being independent of $u$.
So, the second estimate of \eqref{eq.En.Est4} follows from
   \eqref{eq.En.Est2} and
   \eqref{eq.En.Est4}.

We are now ready to establish the desired estimates on $\partial_t u$ and $p$.
Indeed, since $\mathrm{div}\, u = 0$, we get
\begin{equation}
\label{eq.dtu+dp}
   \| (- \varDelta)^{\frac{j}{2}} (\partial_t u + \nabla p) \|^2_{\mathbf{L}^{2}   }
 = \| \nabla^{j} \partial_t u \|^2_{\mathbf{L}^{2}   }
 + \| \nabla^{j+1} p \|^2_{\mathbf{L}^{2}   }
\end{equation}
for all $j$ satisfying $0 \leq j \leq k+1$.
From \eqref{eq.du.solution} it follows that
\begin{eqnarray}
\label{eq.p+u}
\lefteqn{
   \frac{1}{2}\,
   \| (- \varDelta)^{\frac{j}{2}} (\partial_t u + \nabla p) \|^2_{L^2 (I,\mathbf{L}^2   )}
}
\nonumber
\\
 & \leq &
   \| \nabla^{j} f \|^2_{L^2 (I,\mathbf{L}^2   )}
 + \mu\, \| \nabla^{j+2} u \|^2_{L^2 (I, \mathbf{L}^2   )}
 + \| (- \varDelta)^{\frac{j}{2}} \mathbf{D} u \|^2_{L^2 (I, \mathbf{L}^2   )}
\nonumber
\\
\end{eqnarray}
for all $0 \leq j \leq k+1$.
Therefore, the third estimate of \eqref{eq.En.Est4} follows from
   the first and second estimates of \eqref{eq.En.Est4},
   \eqref{eq.dtu+dp} and
   \eqref{eq.p+u},
showing the lemma.
\end{proof}

Clearly, we may obtain additional information on $\partial_t u$ and $p$.

\begin{lemma}
\label{c.En.Est.g.k1}
Under the hypotheses of Lemma \ref{t.En.Est.g.2},
\begin{equation}
\label{eq.En.EstD.C}
\begin{array}{rcl}
   \| \nabla^{j} \mathbf{D} u \|_{C (I, \mathbf{L}^2   )}
 & \leq &
   c_j ( (f,u_0), u),
\\
   \| \nabla^{j} \partial_t u \|^2_{C (I, \mathbf{L}^2   )}
 + \| \nabla^{j+1} p \|^2_{C (I, \mathbf{L}^2   )}
 & \leq &
   c_j ( (f,u_0), u)
\end{array}
\end{equation}
for all $0 \leq j \leq k$, with a positive constant $c_j ( (f,u_0), u)$ depending on the
norms
   $\| (f,u_0) \|_{0,\mu,T}, \ldots, \| (f,u_0) \|_{k+2,\mu,T}$,
   $\| \nabla^{j} f \|_{C (I, \mathbf{L}^{2}   )}$
and
   $\| u \|_{L^\mathfrak{s} (I, \mathbf{L}^\mathfrak{r}   )}$.
\end{lemma}

As mentioned, the constants on the right-hand side of \eqref{eq.En.EstD.C} may also depend
on $\mathfrak{s}$, $\mathfrak{r}$, $T$, $\mu$, etc., but we do not display this dependence
in notation.

\begin{proof}
Using \eqref{eq.du.solution}, we get
\begin{eqnarray}
\label{eq.En.Est.41.tp+}
\lefteqn{
   \sup_{t \in [0,T]}
   \| (- \varDelta)^{\frac{j}{2}} (\partial_t u + \nabla p) (\cdot, t) \|^2_{\mathbf{L}^{2}   }
}
\nonumber
\\
 & \leq &
   \sup_{t \in [0,T]}
   \| (- \varDelta)^{\frac{j}{2}} (f + \mu \varDelta u + \mathbf{D} u) (\cdot, t) \|^2_{\mathbf{L}^{2}   }
\nonumber
\\
 & \leq &
   2
   \sup_{t \in [0,T]}
   \left( \| \nabla^{j} f (\cdot, t) \|^2_{\mathbf{L}^{2}   }
        + \| \nabla^{j+2} u (\cdot, t) \|^2_{\mathbf{L}^{2}   }
        + \| \nabla^{j} \mathbf{D} u (\cdot, t) \|^2_{\mathbf{L}^{2}   }
   \right)
\nonumber
\\
\end{eqnarray}
for all $0 \leq j \leq k$.
The first two summands in the last line of \eqref{eq.En.Est.41.tp+} can be estimated via the data
   $(f,u_0)$ and
   $\| u \|_{L^\mathfrak{s} (I,\mathbf{L}^\mathfrak{r}   )}$
using Lemma \ref{t.En.Est.g.2}.

On applying Lemma \ref{l.En.Est.Du.L2} to the third summand in \eqref{eq.En.Est.41.tp+} we see that
\begin{eqnarray}
\label{eq.Dk-2D}
\lefteqn{
   \| \nabla^j \mathbf{D} u \|^2_{C (I, \mathbf{L}^2   )}
}
\nonumber
\\
 & \leq &
   \| \nabla^{j+2} u \|^2_{C (I, \mathbf{L}^2   )}
 + c (j, \mathfrak{s}, \mathfrak{r}, \varepsilon\!=\!1)\,
   \| u \|^{\mathfrak{s}}_{C (I, \mathbf{L}^{\mathfrak{r}}   )}
   \| \nabla^{j+1} u \|^2_{C (I, \mathbf{L}^2   )}
\nonumber
\\
 & + &
   c (j, \mathfrak{s}, \mathfrak{r})\,
   \| u \|^{2}_{C (I, \mathbf{L}^2   )}
   \| u \|^{2}_{C (I, \mathbf{L}^{\mathfrak{r}}   )}
 + c (j, \mathfrak{s}, \mathfrak{r})\, \| u \|^{2}_{C (I, \mathbf{L}^2   )}
\nonumber
\\
\end{eqnarray}
for all $0 \leq j \leq k$, the constants being independent of $u$. 
On the other hand, we may use the Sobolev embedding theorem (see for instance
   \cite[Ch.~4, Theorem 4.12]{Ad03} or
   \eqref{eq.Sob.index},
   \eqref{eq.Sob.indexc})
to conclude that for any $\lambda \in [0,1/2)$ there exists a constant $c (\lambda)$ independent of
$u$ and $t$,  such that
$$
    \| u (\cdot,t) \|_{\mathbf{C}^{0,\lambda}   }
 \leq
   c (\lambda)\,  \| u (\cdot,t) \|_{\mathbf{H}^{2}   }
$$
for all $t \in [0,T]$.
Then energy estimate \eqref{eq.En.Est2} and Lemma \ref{t.En.Est.g.2} imply immediately that
\begin{equation}
\label{eq.Sob.0}
   \sup_{t \in [0,T]} \|u (\cdot,t) \|_{\mathbf{C}^{0,\lambda}   }
 \leq
    c ((f,u_0), u),
\end{equation}
where the constant $c ((f,u_0), u)$ depends on
   $\| (f,u_0) \|_{j',\mu, T}$ with $j' = 0, 1, 2$
and
   $\| u \|_{L^\mathfrak{s} (I, \mathbf{L}^\mathfrak{r}   )}$,
if inequality \eqref{eq.Sob.index} is fulfilled.
In particular,
\begin{equation}
\label{eq.CT.LsLr}
   \| u \|^{\mathfrak{s}}_{C (I, \mathbf{L}^{{\mathfrak r}}   )}
 \leq
   T \ell^{\frac{3 \mathfrak{s}}{\mathfrak{r}}}
   \sup_{t \in [0,T]} \| u (\cdot,t) \|^{\mathfrak{s}}_{\mathbf{C}   }
 \leq
    T \ell^{\frac{3 \mathfrak{s}}{\mathfrak{r}}}
    c ((f,u_0), u)
\end{equation}
with constant $c ((f,u_0), u)$ from \eqref{eq.Sob.0}. 
Hence, the first estimate of \eqref{eq.En.EstD.C} is fulfilled. 

At this point Lemma \ref{t.En.Est.g.2} and
   \eqref{eq.En.Est2add},
   \eqref{eq.En.Est.41.tp+},
   \eqref{eq.Dk-2D} and
   \eqref{eq.CT.LsLr}
allow us to conclude that
\begin{equation}
\label{eq.En.Est.41.tp++}
   \sup_{t \in [0,T]}
   \| (- \varDelta)^{\frac{j}{2}} (\partial_t u + \nabla p) (\cdot, t) \|^2_{\mathbf{L}^{2}   }
 \leq
   c (j, (f,u_0), u)
\end{equation}
for all $j = 0, 1, \ldots, k$, where $c (j, (f,u_0), u)$ is a positive constant depending on
   $\| (f,u_0) \|_{j',\mu,T}$ with $0 \leq j' \leq k+2$,
   $\| u \|_{L^\mathfrak{s} (I, \mathbf{L}^\mathfrak{r}   )}$
and
   $T$.
Hence, the second estimate of \eqref{eq.En.EstD.C} follows from
   \eqref{eq.dtu+dp} and
   \eqref{eq.En.Est.41.tp++}.
\end{proof}

Our next objective is to evaluate the higher derivatives of both $u$ and $p$ with respect to 
$x$ and $t$.

\begin{lemma}
\label{c.En.Est.g.ks}
Suppose that
   $s\in \mathbb{N}$, 
   $k \in \mathbb{Z}_+$
and
   $\mathfrak{s}$, $\mathfrak{r}$ satisfy \eqref{eq.s.r}.
If
   $(u,p) \in B^{k,2s,s}_\mathrm{vel} (I) \times B^{k+1,2(s-1),s-1}_\mathrm{pre} (I)$
is a solution to the  Navier-Stokes equations of \eqref{eq.NS.map}  
 with data
   $(f,u_0) \in B^{k,2(s-1),s-1}_\mathrm{for} (I) \times V_{k+2s}$
then it is subjected to an estimate of the form
\begin{equation}
\label{eq.En.Est.Bks}
   \| (u,p) \|_{B^{k,2s,s}_\mathrm{vel} (I) \times B^{k+1,2(s-1),s-1}_\mathrm{pre} (I)}
 \leq
   c (k, s, (f,u_0), u),
\end{equation}
the constant on the right-hand side depending on
   $\| f \|_{B^{k,2(s-1),s-1}_\mathrm{for} (I)}$,
   $\| u_0 \|_{V_{2s+k}}$
and
   $\| u \|_{L^\mathfrak{s} (I, \mathbf{L}^\mathfrak{r}   )}$
as well as on
   $\mathfrak{r}$, $T$, $\mu$, etc.
\end{lemma}

\begin{proof}
For $s=1$ and any $k \in \mathbb{Z}_+$, the statement of the lemma was proved in Lemmata
   \ref{t.En.Est.g.2} and
   \ref{c.En.Est.g.k1}.

Then the statement follows by induction with respect to $s$ from the recurrent formulas
\begin{equation}
\label{eq.recurrent}
\begin{array}{rcl}
   \partial^\alpha \partial^{j}_t (\partial_t u + \nabla p)
 & =
 & \partial^\alpha \partial^{j}_t (f + \mu \varDelta u - \mathbf{D} u),
\\
   \| \partial^\alpha \partial^{j}_t (\partial_t u  +  \nabla p) \|^2_{\mathbf{L}^{2}   }
 & =
 & \| \partial^\alpha \partial^{j+1}_t u \|^2_{\mathbf{L}^{2}   }
 + \| \partial^\alpha \partial^{j}_t \nabla p \|^2_{\mathbf{L}^{2}   }
\end{array}
\end{equation}
provided that
   $\mathrm{div}\, u = 0$
and
   $j \in \mathbb{Z}_+$,
   $\alpha \in \mathbb{Z}^n_+ $
are fit for the assumptions.

Indeed, suppose the assertion of the lemma is valid for $s = s_0$ and any $k \in \mathbb{Z}_+$.
We then prove that it is fulfilled for $s = s_0+1$ and any $k \in {\mathbb Z}_+$.
As
$$
\begin{array}{rcl}
   (u,p)
 & \in
 & B^{k,2(s_0+1),s_0+1}_\mathrm{vel} (I) \times B^{k+1,2s_0,s_0}_\mathrm{pre} (I),
\\
   (f,u_0)
 & \in
 & B^{k,2s_0,s_0}_\mathrm{for} (I) \times V_{2(s_0+1)+k},
\end{array}
$$
then, by the definition of the spaces,
$$
\begin{array}{rcl}
   (u,p)
 & \in
 & B^{k+2,2s_0,s_0}_\mathrm{vel} (I) \times B^{k+3,2(s_0-1),s_0-1}_\mathrm{pre} (I),
\\
   (f,u_0)
 & \in
 & B^{k+2,2(s_0-1),s_0-1}_\mathrm{for} (I) \times V_{2s_0+(k+2)}.
\end{array}
$$
Thus, by the induction assumption,
\begin{equation}
\label{eq.En.Est.Bks.prime}
   \| (u,p) \|_{B^{k+2,2s_0,s_0}_\mathrm{vel} (I) \times B^{k+1,2(s_0-1),s_0-1}_\mathrm{pre} (I)}
 \leq
   c (k, s_0, (f,u_0), u),
\end{equation}
where the properties of the constant $c (k, s_0, (f,u_0), u)$ are similar to those described in
the statement of the lemma.

On the other hand, it follows from the first equality of \eqref{eq.recurrent} that for all suitable
$j$ we get
\begin{eqnarray}
\label{eq.est.ind.1}
\lefteqn{
   \| \nabla^{j} \partial^{s_0}_t (\partial_t u + \nabla p) \|^2_{\mathbf{L}^{2}   }
}
\nonumber
\\
 & = &
   \| \nabla^{j} \partial^{s_0}_t (f + \mu \varDelta u - \mathbf{D} u) \|^2_{\mathbf{L}^{2}   }
\nonumber
\\
 & \leq &
   2
   \left( \| \nabla^{j} \partial^{s_0}_t f \|^2_{\mathbf{L}^{2}   }
 + \mu\, \| \nabla^{j+2} \partial^{s_0}_t u \|^2_{\mathbf{L}^{2}   }
 + \| \nabla^{j} \partial^{s_0}_t \mathbf{D} u \|^2_{\mathbf{L}^{2}   }
   \right).
\nonumber
\\
\end{eqnarray}
By the induction assumption, if
   $0 \leq j \leq k+1$ and
   $0 \leq i \leq k$,
then the norms
$
   \| \nabla^{j} \partial^{s_0}_t f \|^2_{L^2 (I, \mathbf{L}^{2}   )}
$
and
$
   \| \nabla^{i} \partial^{s_0}_t f \|^2_{C (I, \mathbf{L}^{2}   )}
$
are finite and
\begin{equation}
\label{eq.est.ind.2}
\begin{array}{rcl}
   \| \nabla^{j+2} \partial^{s_0}_t u \|^2_{L^2 (I, \mathbf{L}^{2}   )}
 & \leq
 & c\, \| u \|^2_{B^{k+2,2s_0,s_0}_\mathrm{vel} (I)},
\\
   \| \nabla^{i+2} \partial^{s_0}_t u \|^2_{C (I, \mathbf{L}^{2}   )}
 & \leq
 & c\, \| u \|^2_{B^{k+2,2s_0,s_0}_\mathrm{vel} (I)}
\end{array}
\end{equation}
with constants $c$ independent of $u$ and not necessarily the same in diverse applications.
Besides, \eqref{eq.Leib.k.s} and \eqref{eq.B.cont.9}, \eqref{eq.B.cont.10} with $w = u$ yield
\begin{equation}
\label{eq.est.ind.3}
\begin{array}{rcl}
   \| \nabla^{j} \partial^{s_0}_t \mathbf{D} u \|^2_{L^2 (I, \mathbf{L}^{2}   )}
 & \leq
 & c\, \| u \|^4_{B^{k+2,2s_0,s_0}_\mathrm{vel} (I)},
\\
   \| \nabla^{i} \partial^{s_0}_t \mathbf{D} u \|^2_{C (I, \mathbf{L}^{2}   )}
 & \leq
 & c\, \| u \|^4_{B^{k+2,2s_0,s_0}_\mathrm{vel} (I)}
\end{array}
\end{equation}
provided
   $0 \leq j \leq k+1$ and
   $0 \leq i \leq k$,
the constants being independent of $u$.

Finally, combining
   \eqref{eq.En.Est.Bks.prime},
   \eqref{eq.est.ind.1},
   \eqref{eq.est.ind.2},
   \eqref{eq.est.ind.3}
with the second equality of \eqref{eq.recurrent}, we conclude that
$$
   \| (u,p) \|_{B^{k,2(s_0+1),s_0+1}_\mathrm{vel} (I) \times B^{k+1,2s_0,s_0}_\mathrm{pre} (I)}
 \leq
   c (k, s_0+1, (f,u_0), u),
$$
where the constant on the right-hand side depends on
   $\| f \|_{B^{k,2s_0,s_0}_\mathrm{for} (I)}$,
   $\| u_0 \|_{V_{2(s_0+1)+k}}$
and
   $\| u \|_{L^\mathfrak{s} (I, \mathbf{L}^\mathfrak{r}   )}$
as well as on
   $\mathfrak{r}$, $T$, $\mu$, etc.
This proves the lemma.
\end{proof}

Keeping in mind Corollary \ref{c.clopen}, we are now in a position to show that the range of 
mapping \eqref{eq.map.A} is closed  if given subset   $S = S_\mathrm{vel} \times 
S_\mathrm{pre}$ of the product
   $B^{k,2s,s}_\mathrm{vel} (I) \times B^{k+1,2(s-1),s-1}_\mathrm{pre} (I)$ 
such that the image $\mathcal{A} (S)$ is precompact in the space
   $B^{k,2(s-1),s-1}_\mathrm{for} (I) \times V_{2s+k}$,
the set $S_\mathrm{vel}$ is bounded in the space
   $L^{\mathfrak{s}} (I,\mathbf{L}^{\mathfrak{r}}   )$ with 
	a pair $\mathfrak{s}$, $\mathfrak{r}$ satisfying \eqref{eq.s.r}.

Indeed, let a pair
    $(f,u_0) \in B^{k,2(s-1),s-1}_\mathrm{for} (I) \times V_{2s+k}$
belong to the closure of the range of values of the mapping $\mathcal{A}$.
Then there is a sequence $\{ (u_i,p_i) \}$ in
    $B^{k,2s,s}_\mathrm{vel} (I) \times B^{k+1,2(s-1),s-1}_\mathrm{pre} (I)$
such that the sequence
   $\{ (f_i, u_{i,0}) = \mathcal{A} (u_i, p_i) \}$
converges to  $(f,u_0)$ in the space $B^{k,2(s-1),s-1}_\mathrm{for} (I) \times V_{2s+k}$.

Consider the set $S = \{ (u_i, p_i)\}$.
As the image $\mathcal{A} (S) = \{ (f_i, u_{i,0}) \}$ is precompact in
   $B^{k,2(s-1),s-1}_\mathrm{for} (I) \times V_{2s+k}$,
it follows from our assumption that the subset
   $S_\mathrm{vel} = \{ u_i \}$ of
   $B^{k,2s,s}_\mathrm{vel} (I)$
is bounded in the space  
$L^\mathfrak{s} (I, \mathbf{L}^{\mathfrak{r}}   )$. 

On applying Lemmata \ref{p.En.Est.u.strong} and \ref{c.En.Est.g.ks} we conclude that the sequence
   $\{ (u_i, p_i) \}$
is bounded in the space
   $ B^{k,2s,s}_\mathrm{vel} (I) \times B^{k+1,2(s-1),s-1}_\mathrm{pre} (I)$.
By the definition of $B^{k,2s,s}_\mathrm{vel} (I)$, the sequence $\{ u_i \} $ is bounded in
   $C (I, \mathbf{H}^{k+2s}   )$ and
   $L^2 (I, \mathbf{H}^{k+2s+1}   )$,
and the partial derivatives $\{ \partial_t^j u_i \}$ in time with $1 \leq j \leq s$ are 
bounded in
   $C (I, \mathbf{H}^{k+2(s-j)}   )$ and
   $L^2 (I, \mathbf{H}^{k+2(s-j+1)}   )$.
Therefore, there is a subsequence $\{ u_{i_k} \}$ such that

1)
The sequence
   $\{ \partial^{\alpha+\beta}_x \partial_t^j u_{i_k} \}$
converges weakly in $L^2 (I, \mathbf{L}^{2}   )$ provided that
   $|\alpha| + 2j \leq 2s$ and
   $|\beta| \leq k+1$.

2)
The sequence
   $\{ \partial^{\alpha+\beta}_x \partial_t^j u_{i_k} \}$
converges weakly-$^\ast$ in $L^\infty (I, \mathbf{L}^{2}   )$ provided that
   $|\alpha| + 2j \leq 2s$ and
   $|\beta| \leq k$.
	
It is clear that the limit $u$ of  $\{ u_{i_k} \}$ is a solution to the 
Navier-Stokes equations \eqref{eq.NS.weak} such that

1)
Each derivative
   $\partial^{\alpha+\beta}_x \partial_t^j u$
belongs to $L^2 (I, V_0)$ provided that
   $|\alpha| + 2j \leq 2s$ and
   $|\beta| \leq k+1$.

2)
Each derivative
   $\partial^{\alpha+\beta}_x \partial_t^j u$
belongs to $L^\infty (I, V_0)$ provided that
   $|\alpha| + 2j \leq 2s$ and
   $|\beta| \leq k$.
	
According to Theorem \ref{t.NS.unique} such a solution is unique. In addition, if 
\begin{equation}
\label{eq.0js-1}
\begin{array}{rcl}
   0 \, \leq \, j
 & \leq
 & s-1,
\\
   |\alpha| + 2j
 & \leq
 & 2s,
\\
   |\beta|
 & \leq
 & k,
\end{array}
\end{equation}
then
   $\partial^{\alpha+\beta}_x \partial_t^j u \in  L^2 (I, V_1)$ and
   $\partial^{\alpha+\beta}_x \partial_t^{j+1} u \in L^2 (I, V'_1)$.
Applying Lemma \ref{l.Lions} we readily conclude that
   $\partial^{\alpha+\beta}_x  \partial_t^j u \in C (I, V_0)$
for all $j$ and $\alpha$, $\beta$ satisfying (\ref{eq.0js-1}). 
Hence  it follows that $u$ belongs to the space
   $B^{k+2,2(s-1),s-1}_\mathrm{vel} (I)$.
Moreover, using formulas \eqref{eq.Leib.k.s} and \eqref{eq.B.cont.9}, \eqref{eq.B.cont.10} 
with $w=u$ implies that the derivatives
   $\partial^{\alpha+\beta}_x \partial_t^j \mathbf{D} u$
belong to $C (I, \mathbf{L}^2   )$ for all $j$ and $\alpha$, $\beta$ which satisfy
inequalities (\ref{eq.0js-1}).

Besides, according to Lemma \ref{l.projection.commute}, the operator $\mathbf{P}$ maps
   $C (I,\mathbf{L}^2   )$ continuously into
   $C (I,\mathbf{L}^2   )$.
Therefore, since $u$ is a solution to \eqref{eq.NS.weak} we deduce that
$$
   \partial^{\beta}_x \partial_t^s  u
 = \partial^{\beta}_x \partial_t^{s-1} \mu \varDelta u
 - \partial^{\beta}_x \partial_t^{s-1} \mathbf{P} \mathbf{D} u
 + \partial^{\beta}_x \partial_t^{s-1} \mathbf{P} f
$$
belongs to $C (I,V_0)$ for all multi-indices $\beta$ such that $|\beta| \leq k$.
In other words, $u$ lies in
   $B^{k,2s,s}_\mathrm{vel} (I)$. 
Finally, applying Proposition \ref{c.Sob.d}, 
we conclude that there is $p \in B^{k+1,2(s-1),s-1}_\mathrm{pre} (I)$
such that 
$$
\nabla p
 = (I - \mathbf{P}) (f - \mathbf{D} u),
$$
i.e. the pair $(u,p)$ belongs to $B^{k,2s,s}_\mathrm{vel} (I)
\times B^{k+1,2(s-1),s-1}_\mathrm{pre} (I)$ and it is a solution to \eqref{eq.NS.map}.

Thus, we have proved that the image of the mapping in \eqref{eq.map.A} is closed. 
Then the statement of the theorem related to the surjectivity of the mapping follows from
   Corollary \ref{c.clopen}.
\end{proof}

\begin{corollary}
\label{c.LsLr}
Let and the numbers $\mathfrak{r}$, $\mathfrak{s}$ satisfy \eqref{eq.s.r}. 
Then mapping \eqref{eq.map.A.Frechet} is surjective 
if and only if, given subset   $S = S_\mathrm{vel} \times S_\mathrm{pre}$ of the product
   $C^\infty (I,V_\infty) \times C^\infty (I,\dot{C}^{\infty}) $ 
such that the image $\mathcal{A} (S)$ is bounded in the space
   $C^\infty (I,\mathbf{C}^{\infty}   ) \times V_\infty$,
the set $S_\mathrm{vel}$ is bounded in the space
   $L^{\mathfrak{s}} (I,\mathbf{L}^{\mathfrak{r}}   )$.
\end{corollary}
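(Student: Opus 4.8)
The plan is to establish both implications in parallel with the proof of Theorem~\ref{t.LsLr}, the one genuinely new ingredient being that the target of \eqref{eq.map.A.Frechet} is a nuclear Fr\'echet space. Indeed, identifying $C^\infty (I,\mathbf{C}^{\infty})$ with $\mathbf{C}^\infty (I \times {\mathbb T}^3)$ and recalling that smooth functions on the compact manifold $I \times {\mathbb T}^3$ form a nuclear, hence Montel, Fr\'echet space, while $V_\infty$ is a closed subspace of $\mathbf{C}^\infty ({\mathbb T}^3)$, we see that $C^\infty (I,\mathbf{C}^{\infty}) \times V_\infty$ is Montel. In such a space every bounded set is relatively compact, and this is exactly what allows the hypothesis ``$\mathcal{A}(S)$ bounded'' to stand in for the hypothesis ``$\mathcal{A}(S)$ precompact'' of the Banach setting in Theorem~\ref{t.LsLr}. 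Throughout I would freely use Corollary~\ref{c.open.NS.short}, by which the map $\mathcal{A}$ of \eqref{eq.map.A.Frechet} is injective, continuous and open.

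For the necessity I would argue as follows. If \eqref{eq.map.A.Frechet} is surjective, then $\mathcal{A}$ is a continuous open bijection, hence a homeomorphism, so $\mathcal{A}^{-1}$ is continuous on the whole target. Given $S = S_\mathrm{vel} \times S_\mathrm{pre}$ with $\mathcal{A}(S)$ bounded, the Montel property gives that $\overline{\mathcal{A}(S)}$ is compact, whence $\mathcal{A}^{-1}(\overline{\mathcal{A}(S)})$ is compact and in particular bounded; by injectivity it contains $S = \mathcal{A}^{-1}(\mathcal{A}(S))$. Thus $S$, and a fortiori $S_\mathrm{vel}$, is bounded in $C^\infty (I,V_\infty)$. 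Since $C^\infty (I,V_\infty)$ embeds continuously into each $B^{0,2s,s}_\mathrm{vel}(I)$, and $B^{0,2s,s}_\mathrm{vel}(I)$ embeds continuously into $L^{\mathfrak{s}} (I,\mathbf{L}^{\mathfrak{r}})$ for any pair satisfying \eqref{eq.s.r} (as already exploited in Theorem~\ref{t.LsLr}), the set $S_\mathrm{vel}$ is bounded in $L^{\mathfrak{s}} (I,\mathbf{L}^{\mathfrak{r}})$, as required.

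For the sufficiency I would, in view of Corollary~\ref{c.clopen}, only need to prove that the range of \eqref{eq.map.A.Frechet} is closed. Let $(f,u_0)$ lie in the closure of the range and pick $(u_i,p_i)$ in the domain with $\mathcal{A}(u_i,p_i) = (f_i,u_{i,0}) \to (f,u_0)$. A convergent sequence in a topological vector space is bounded, so with $S := \{(u_i,p_i)\}$ the image $\mathcal{A}(S)$ is bounded, and by hypothesis $\{u_i\}$ is bounded in $L^{\mathfrak{s}} (I,\mathbf{L}^{\mathfrak{r}})$; moreover convergence in the Fr\'echet topology forces $\{(f_i,u_{i,0})\}$ to be bounded in every Banach space $B^{0,2(s-1),s-1}_\mathrm{for}(I) \times V_{2s}$. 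Feeding these two facts into Lemmata~\ref{p.En.Est.u.strong} and \ref{c.En.Est.g.ks} (with $k=0$) yields that $\{(u_i,p_i)\}$ is bounded in $B^{0,2s,s}_\mathrm{vel}(I) \times B^{1,2(s-1),s-1}_\mathrm{pre}(I)$ for every $s \in \mathbb{N}$, i.e. bounded in the Fr\'echet space itself. A diagonal extraction combined with the weak and weak-$\ast$ compactness arguments from the closedness part of the proof of Theorem~\ref{t.LsLr} then furnishes a subsequence whose limit $(u,p)$ solves \eqref{eq.NS.map} with data $(f,u_0)$ and, owing to the uniform bounds at every level $s$, lies in $B^{0,2s,s}_\mathrm{vel}(I) \times B^{1,2(s-1),s-1}_\mathrm{pre}(I)$ for all $s$, hence in $C^\infty (I,V_\infty) \times C^\infty (I,\dot{C}^{\infty})$ by the identifications recorded in the proof of Corollary~\ref{c.open.NS.short}. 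Therefore $(f,u_0)$ belongs to the range, the range is closed, and Corollary~\ref{c.clopen} delivers surjectivity.

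The step demanding the most care is the limit passage in the sufficiency direction: one must extract a \emph{single} subsequence that works simultaneously at all smoothness levels $s$ (a diagonal procedure over the countable family of Banach norms) and then check that the weak limit genuinely solves the nonlinear problem, the only truly nonlinear term $\mathbf{D}u$ being handled exactly as in Theorem~\ref{t.LsLr} through the compact Bochner--Sobolev embeddings that upgrade weak convergence to strong $L^2 (I,\mathbf{L}^2)$ convergence; the uniqueness Theorem~\ref{t.NS.unique} then guarantees that the limits obtained at different levels coincide, so $(u,p)$ is unambiguously defined and smooth. By contrast, the Montel step in the necessity direction, though conceptually the crux that licenses ``bounded'' in place of ``precompact'', is routine once the nuclearity of $\mathbf{C}^\infty (I \times {\mathbb T}^3)$ and of $V_\infty$ is invoked.
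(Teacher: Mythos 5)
Your proof is correct and takes essentially the same route as the paper: the paper's two-line argument observes that $C^\infty (I,\mathbf{C}^{\infty}) \times V_\infty$ embeds compactly into each $B^{k,2(s-1),s-1}_\mathrm{for} (I) \times V_{2s+k}$ (your Montel-space observation in different clothing, which is what lets ``bounded'' replace ``precompact'') and then reduces to Theorem~\ref{t.LsLr} and Corollary~\ref{c.clopen}. Your write-up merely carries out that reduction in detail, running the necessity direction via the global homeomorphism and the sufficiency direction by re-running the closedness argument of Theorem~\ref{t.LsLr} at every smoothness level.
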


\begin{proof} Clearly, for each $k \in {\mathbb Z}_+$
and $s \in \mathbb N$, the space $C^\infty (I,\mathbf{C}^{\infty}   ) \times V_\infty$ is 
embedded compactly into the space 
$B^{k,2(s-1),s-1}_\mathrm{for} (I) \times V_{2s+k}$. Thus, the statement 
follows immediately from Corollary \ref{c.clopen} and Theorem \ref{t.LsLr}.
\end{proof}

\bigskip

\textit{Acknowledgments\,}
The first author was supported by a grant of the Foundation for the advancement of theoretical
physics and mathematics ``BASIS.''

\end{document}